\newtheorem{theorem}{Theorem}%[section]
\newtheorem{corollary}[theorem]{Corollary}
\newtheorem{proposition}[theorem]{Proposition}
\newtheorem{lemma}[theorem]{Lemma}
\newtheorem{definition}[theorem]{Definition}
\newtheorem{example}[theorem]{Example}
\newcommand{\md}{\mathcal{D}}
\newcommand{\lrmin}{{\rm lrmin\,}}
\newcommand{\I}{{\rm I\,}}
\newcommand{\rlmin}{{\rm rlmin\,}}
\newcommand{\drop}{{\rm drop\,}}
\newcommand{\ap}{{\rm ap\,}}
\newcommand{\lap}{{\rm lap\,}}
\newcommand{\des}{{\rm des\,}}
\newcommand{\exc}{{\rm exc\,}}
\newcommand{\cyc}{{\rm cyc\,}}
\newcommand{\fix}{{\rm fix\,}}
\newcommand{\msn}{\mathfrak{S}_n}
\newcommand{\ms}{\mathfrak{S}}
\newcommand{\lrf}[1]{\lfloor #1\rfloor}
\newcommand{\mbn}{{\mathcal B}_n}
\newcommand{\mq}{\mathcal{Q}}
\newcommand{\mqn}{\mathcal{Q}_n}
\newcommand{\z}{ \mathbb{Z}}
\newcommand{\asc}{{\rm asc\,}}
\title{Symmetric decompositions and Euler-Stirling statistics on Stirling permutations}
\author[S.-M.~Ma]{Shi-Mei Ma}
\address{School of Mathematics and Statistics, Shandong University of Technology, Zibo 255000, Shandong, P.R. China}
\email{shimeimapapers@163.com (S.-M. Ma)}
\author[J. Wang]{Jianfeng Wang}
\address{School of Mathematics and Statistics, Shandong University of Technology, Zibo 255000, Shandong, P.R. China}
\email{jfwang@aliyun.com (J. Wang)}
\author[G. Yan]{Guiying Yan}
\address{Academy of Mathematics and Systems Science, Chinese Academy of Sciences, Beijing 100190, P.R. China}
\email{yangy@amss.ac.cn (G. Yan)}
\author[J. Yeh]{Jean Yeh}
\address{Department of Mathematics, National Kaohsiung Normal University, Kaohsiung 82444, Taiwan}
\email{chunchenyeh@nknu.edu.tw (J. Yeh)}
\author[Y.-N. Yeh]{Yeong-Nan Yeh}
\address{College of Mathematics and Physics, Wenzhou University, Wenzhou 325035, P.R. China}
\email{mayeh@math.sinica.edu.tw (Y.-N. Yeh)}
\subjclass[2010]{Primary 05A19; Secondary 05E05}
\begin{document}

\maketitle
\begin{abstract}
The Stirling permutations introduced by Gessel-Stanley have recently received considerable attention.
Motivated by Ji's work on $(\alpha,\beta)$-Eulerian polynomials (Sci China Math., 2025) and Yan-Yang-Lin's work on $1/k$-Eulerian polynomials (J. Combin. Theory Ser. A, 2026), 
we present several symmetric decompositions of the enumerators related to
Euler-Stirling statistics on Stirling permutations. Firstly, we provide a partial symmetric decomposition for the $1/k$-Eulerian polynomial.
Secondly, we give several unexpected applications of the $(p,q)$-Eulerian polynomials, 
where $p$ marks the number of fixed points of permutations and $q$ marks that of cycles.
From this paper, one can see that $(p,q)$-Eulerian polynomial contains a great deal of information about permutations and Stirling permutations.
Using the change of grammars, we show that the $(\alpha,\beta)$-Eulerian polynomials 
introduced by Carlitz-Scoville can be deduced from the
$(p,q)$-Eulerian polynomials by special parametrizations.
We then introduce proper and improper ascent-plateau statistics on Stirling permutations.
Moreover, we introduce proper ascent, improper ascent, proper descent and improper descent statistics on permutations.
Furthermore, we consider the joint distributions of Euler-Stirling statistics on permutations, including the numbers of 
improper ascents, proper ascents, left-to-right minima and right-to-left minina.
In the final part, we first give a symmetric decomposition of the joint distribution of the ascent-plateau and left ascent-plateau statistics,
and then we show that the 
$q$-ascent-plateau polynomials are bi-$\gamma$-positive, where $q$ marks the number of left-to-right minima. 
\bigskip

\noindent{\sl Keywords}: Euler-Stirling statistics; Proper ascents; Proper descents; Stirling permutations
\end{abstract}
\date{\today}
%\date{\today}
%\newpage
%\newpage
%%
\tableofcontents
%\tableofcontents
% \linenumbers
%\newpage
%%%%%%%%%%%%%%%%%%%%%%%%%%%%%%%%%%%%%%%%%%%%%%%%%%%%%%%%%%%%%%%%%%%%%%%%%%
%%%%%%%%%%%%%%%%%%%%%%%%%%%%%%%%%%%%%%%%%%%%%%%%%%%%%%%%%%%%%%%%%%%%%%%%%%
%%%%%%%%%%%%%%%%%%%%%%%%%%%%%%%%%%%%%%%%%%%%%%%%%%%%%%%%%%%%%%%%%%%%%%%%%%
%%%%%%%%%%%%%%%%%%%%%%%%%%%%%%%%%%%%%%%%%%%%%%%%%%%%%%%%%%%%%%%%%%%%%%%%%%
%%%%%%%%%%%%%%%%%%%%%%%%%%%%%%%%%%%%%%%%%%%%
%%%%%%%%%%%%%%%%%%%%%%%%%%%%%%%%%%%%%%%%%%%%%%%%%%%%%%%%%%%%%%%%%%%%%%%%%%
%%%%%%%%%%%%%%%%%%%%%%%%%%%%%%%%%%%%%%%%%%%%%%%%%%%%%%%%%%%%%%%%%%%%%%%%%%
%%%%%%%%%%%%%%%%%%%%%%%%%%%%%%%%%%%%%%%%%%%%%%%%%%%%%%%%%%%%%%%%%%%%%%%%%%
%%%%%%%%%%%%%%%%%%%%%%%%%%%%%%%%%%%%%%%%%%%%
\section{Introduction}
%%%%%%%%%%%%%%%%%%%%%%%%%%%%%%%%%%%%%%%%%%%%%%%%%%%%%%%%%%%%%%%%%%%%%%%%%%%%%%%%%%%%
%\subsection{Notation and preliminaries}
%%%%%%%%%%%%%%%%%%%%%%%%%%%%%%%%%%%%%%%%%%%%%%
%%%%%%%%%%%%%%%%%%%%%%%%%%%%%%%%%%%%%%%%%%%%%%
%%%%%%%%%%%%%%%%%%%%%%%%%%%%%%%%%%%%%%%%%%%%%%
%%%%%%%%%%%%%%%%%%%%%%%%%%%%%%%%%%%%%%%%%%%%%%
%%%%%%%%%%%%%%%%%%%%%%%%%%%%%%%%%%%%%%%%%%%%%%
%\hspace*{\parindent}
Let $\msn$ denote the set of permutations on $[n]=\{1,2,\ldots,n\}$.
For $\pi=\pi(1)\cdots\pi(n)\in\msn$, an entry $\pi(i)$ is called a {\it descent} (resp.~{\it ascent},~{\it excedance},~{\it drop},~{\it fixed point})
if $\pi(i)>\pi(i+1)$ (resp.~$\pi(i-1)<\pi(i)$, ~$\pi(i)>i$,~$\pi(i)<i$,~$\pi(i)=i$), where we set $\pi(0)=\pi(n+1)=0$. 
A {\it left-to-right minimum} of $\pi$ is a value $\pi(i)$ such that $\pi(i)<\pi(j)$ for all $j<i$ or $i=1$.
Let $\des(\pi)$ (resp.~$\asc(\pi)$, $\exc(\pi)$, $\drop(\pi)$, $\fix(\pi)$, $\cyc(\pi)$ and $\lrmin(\pi)$) be the number of descents (resp.~ascents, excedances, drops, fixed points, cycles and left-to-right minima) of $\pi$. 
The classical {\it Eulerian polynomials} can be defined by
$$A_n(x)=\sum_{\pi\in\msn}x^{\des(\pi)}=\sum_{\pi\in\msn}x^{\asc(\pi)}=\sum_{\pi\in\msn}x^{\exc(\pi)+1}=\sum_{\pi\in\msn}x^{\drop(\pi)+1}.$$
It is well known that 
$$\sum_{\pi\in\msn}x^{\cyc(\pi)}=\sum_{\pi\in\msn}x^{\lrmin(\pi)}=x(x+1)(x+2)\cdots (x+n-1).$$
As usual, a statistic is called a {\it Eulerian} (resp.~{\it Stirling}) statistic, if it has the same distribution as ``\exc'' (resp.~``\cyc'') on the symmetric group $\msn$.

There has been an increasing interest in the joint distributions of Euler-Stirling statistics on permutations. 
For example, Jin~\cite{Jin23} considered the joint distribution of the numbers of descents, inverse descents, left-to-right maxima and right-to-left maxima. Pei-Zeng~\cite{Pei24} considered the enumeration problem of derangements with signed right-to-left minima and excedances.
Very recently, Ji~\cite{Ji25} 
investigated the joint distribution of descent-Stirling statistics of permutations. More importantly, Ji~\cite{Ji25} discovered a novel way to deduce 
exponential formula. Subsequently, Ji-Lin~\cite{Ji24} 
considered a unified extension of the binomial-Stirling Eulerian polynomials and 
Stirling-Eulerian polynomials. In~\cite{Dong24}, 
Dong-Lin-Pan proved a $q$-analog of Ji's exponential formula. 
Xu-Zeng~\cite{Xu25} 
provided an alternative approach to Ji's exponential formula and gave a generalization of it. Chen-Fu~\cite{Chen25} gave a novel labeling scheme for increasing binary trees as an alternative interpretation
of the $(\alpha,\beta)$-Eulerian polynomials introduced by Carlitz-Scoville~\cite{Carlitz74}.
Motivated by Ji's work on $(\alpha,\beta)$-Eulerian polynomials~\cite{Ji25} and Yan-Yang-Lin's work on $1/k$-Eulerian polynomials~\cite{Yan26},
the objective of this paper is to investigate the polynomials involving Euler-Stirling statistics on Stirling permutations. 

Following Savage-Viswanathan~\cite{Savage2012}, 
the {\it $1/k$-Eulerian polynomials} $A_n^{(k)}(x)$ are defined by
\begin{equation*}\label{Ankx-def01}
\sum_{n=0}^\infty A_n^{(k)}(x)\frac{z^n}{n!}=\left(\frac{1-x}{\mathrm{e}^{kz(x-1)}-x} \right)^{\frac{1}{k}},
\end{equation*}
where $k$ is a given positive integer. 
Savage-Viswanathan~\cite{Savage2012} discovered that   
\begin{equation}\label{Ankx-def02}
A_n^{(k)}(x)=\sum_{\pi\in\msn}x^{\exc(\pi)}k^{n-\cyc(\pi)}=E_n^{(1,k+1,2k+1,\ldots,(n-1)k+1)}(x),
\end{equation}
where $E_n^{(1,k+1,2k+1,\ldots,(n-1)k+1)}(x)$ is the {\it $\textbf{s}$-Eulerian polynomial} defined on 
$$\I_n^{(\textbf{s})}=\{(e_1,e_2,\ldots,e_n)\in\z^n:~~0\leqslant e_i<(i-1)k+1\}.$$
These polynomials satisfy the recursion
\begin{equation}\label{Anj-recu00}
A_{n+1}^{(k)}(x)=(nkx+1)A_n^{(k)}(x)+kx(1-x)\frac{\mathrm{d}}{\mathrm{d}x}A_n^{(k)}(x),~A_1^{(k)}(x)=1.
\end{equation}

For $\mathbf{m}=(m_1,m_2,\ldots,m_n)\in \mathbb{N}^n$, let $[\mathbf{n}]=\{1^{m_1},2^{m_2},\ldots,n^{m_n}\}$ be a multiset,
where the superscript denotes multiplicity. A {\it multipermutation} of $[\mathbf{n}]$ is a sequence of its elements. 
Given a multipermutation $\sigma$ of $[\mathbf{n}]$. Then $\sigma$ is called a {\it Stirling permutation} 
if $\sigma_i=\sigma_j$ and $i<s<j$, then $\sigma_s\geqslant\sigma_i$.
A {\it $k$-Stirling permutation} is a Stirling permutation over $[\textbf{n}]_k=\{1^{k},2^{k},\ldots,n^{k}\}$, where each entry appears $k$ times.
Let $\mq_n^{(k)}$ denotes the set of $k$-Stirling permutations over $[\textbf{n}]_k$. 
For $\sigma\in\mqn^{(k)}$, we always set $\sigma_0=\sigma_{kn+1}=0$ as usual, and we say that an value $\sigma_i$ is 
\begin{itemize}
   \item [$\bullet$] a {\it longest ascent-plateau} if $\sigma_{i-1}<\sigma_{i}=\sigma_{i+1}=\cdots=\sigma_{i+k-1}$, where $2\leqslant i\leqslant nk-k+1$;
   \item [$\bullet$] a {\it longest left ascent-plateau} if $\sigma_{i-1}<\sigma_{i}=\cdots=\sigma_{i+k-1}$, where $1\leqslant i\leqslant nk-k+1$;
   \item [$\bullet$] a {\it left-to-right minimum} if $\sigma_{i}<\sigma_j$ for $1\leqslant j<i\leqslant kn$ or $i=1$;
   \item [$\bullet$] a {\it right-to-left minimum} if $\sigma_{i}<\sigma_j$ for $1\leqslant i<j\leqslant kn$ or $i=kn$.
\end{itemize}

Let $\ap(\sigma)$ (resp.~$\lap(\sigma)$,~$\lrmin(\pi)$ and ~$\rlmin(\pi)$) be the number of longest ascent-plateaux (resp.~longest left ascent-plateaux, left-to-right minima and right-to-left minima) of $\sigma$. 
When $k=1$, the set $\mq_n^{(k)}$ reduces to $\msn$.
When $k=2$,
the set $\mq_n^{(k)}$ reduces to $\mqn$, which 
is the set of classical Stirling permutations introduced by Gessel-Stanley~\cite{Gessel78}. 
Except where explicitly stated, we assume that all Stirling permutations belong to $\mqn$. 
When $\sigma\in\mqn$, we use ascent-plateau (resp.~left ascent-plateau) for the longest ascent-plateau (resp.~longest left ascent-plateau).

There have been some recent work concerning the connections between permutations and Stirling permutations, 
see~\cite{Bona08,Brualdi20,Dzhumadil14,Haglund12,Kuba11,MaYeh17,Ma26} for details.
According to~\cite{Ma15}, we have  
\begin{equation}\label{Ankap}
A_n^{(k)}(x)=\sum_{\sigma\in\mqn^{(k)}}x^{\ap(\sigma)},~x^nA_n^{(k)}\left(\frac{1}{x}\right)=\sum_{\sigma\in\mqn^{(k)}}x^{\lap(\sigma)}.
\end{equation}

For the sake of convenience, in this paper, we always set 
$$M_n(x)=\sum_{\sigma\in\mqn}x^{\ap(\sigma)},~N_n(x)=\sum_{\sigma\in\mqn}x^{\lap(\sigma)}.$$
The first few of these polynomials are 
$M_1(x)=1,~M_2(x)=1+2x,~M_3(x)=1+10x+4x^2$, $N_1(x)=x,~N_2(x)=2x+x^2,~N_3(x)=4x+10x^2+x^3$.
From~\eqref{Ankap}, we see that $N_n(x)=x^nM_n(1/x)$. They appear frequently in combinatorics and geometry.
We list three of them as follows:
\begin{itemize}
   \item [$\bullet$] The polynomials $N_n(x)$ are the primitive Eulerian polynomials of type $B$, see~\cite{Bastidas23};
   \item [$\bullet$] The polynomials $N_n(x)$ are the enumerator of $n$-edge increasing ordered trees by number of bad vertices, see~\cite[Theorem~2]{Callan10};
   \item [$\bullet$] The polynomials $N_n(x)$ are the enumerator of perfect matchings of the set $[2n]$ by number of blocks with even maximal elements, see~\cite{MaYeh17}.
\end{itemize}

Let $\mbn$ denote the hyperoctahedral group of rank $n$. 
Elements of $\mbn$ are signed permutations of the set $\pm[n]=[n]\cup\{\overline{1},\ldots,\overline{n}\}$
with the property that $\sigma(\overline{i})=-\sigma(i)$ for all $i\in [n]$, where $\overline{i}=-i$.
Following Brenti~\cite{Brenti94},
the {\it type $B$ Eulerian polynomials} are defined by
$$B_n(x)=\sum_{\sigma\in\mbn}x^{\operatorname{des}_B(\sigma)},$$
where $\operatorname{des}_B(\pi)=\#\{i\in\{0,1,2,\ldots,n-1\}:~\pi(i)>\pi({i+1})~\&~\pi(0)=0\}$.
According to~\cite[Proposition~1]{MaYeh17}, we have
\begin{equation}\label{Convo01}
\begin{split}
2^nA_n(x)&=\sum_{i=0}^n\binom{n}{i}N_i(x)N_{n-i}(x),~B_n(x)=\sum_{i=0}^n\binom{n}{i}M_i(x)N_{n-i}(x).
\end{split}
\end{equation}
Here we collect the recursions of these polynomials:
\begin{equation*}
\begin{split}
A_{n+1}(x)&=(n+1)xA_{n}(x)+x(1-x)\frac{\mathrm{d}}{\mathrm{d}x}A_{n}(x),~A_0(x)=1;\\
B_{n+1}(x)&=(2nx+1+x)B_{n}(x)+2x(1-x)\frac{\mathrm{d}}{\mathrm{d}x}B_{n}(x),~B_0(x)=1;\\
M_{n+1}(x)&=(2nx+1)M_n(x)+2x(1-x)\frac{\mathrm{d}}{\mathrm{d}x}M_n(x),~M_0(x)=1;\\
N_{n+1}(x)&=(2n+1)xN_n(x)+2x(1-x)\frac{\mathrm{d}}{\mathrm{d}x}N_n(x),~N_0(x)=1.
\end{split}
\end{equation*}
%Motivated by~\eqref{Convo01}, a statistic of Stirling permutation is said to be {\it Eulerian} if it is related to $\ap$.
Motivated by~\eqref{Ankx-def02} and~\eqref{Convo01}, there has been much work devoted to the $1/k$-Eulerian polynomials in recent years, 
see~\cite{Haglund12,Ji25,Liu21,Ma24,Savage15,Yan26} for instances.
For example, Liu~\cite{Liu21,Liu23} considered the $1/k$-Euler-Mahonian polynomials and established an extension of the
Haglund-Remmel-Wilson identity to $k$-Stirling permutations.  

This paper is organized as follows. In the next section, 
we provide a partial symmetric decomposition for the $1/k$-Eulerian polynomial. 
In Section~\ref{section4}, we find some unexpected applications of the $(p,q)$-Eulerian polynomials $A_n(x,y,p,q)=\sum_{\pi\in\msn}x^{\exc(\pi)}y^{\drop(\pi)}p^{\fix(\pi)}q^{\cyc(\pi)}$.
In particular, we introduce proper and improper ascent-plateaux of Stirling permutations.
Among other results, from Theorem~\ref{thm17}, we see that 
\begin{equation}\label{Ankk}
x^nA_n^{(k)}(1/x)=\sum_{\sigma\in\mqn^{(k)}}x^{\operatorname{lap}(\sigma)}=\sum_{\pi\in\msn}x^{\exc(\pi)+\fix(\pi)}k^{n-\cyc(\pi)},
\end{equation}
which gives a dual of the following result~~\cite{Ma15,Savage2012,Yan26}:
$$A_n^{(k)}(x)=\sum_{\sigma\in\mqn^{(k)}}x^{\operatorname{ap}(\sigma)}=\sum_{\pi\in\msn}x^{\exc(\pi)}k^{n-\cyc(\pi)}.$$
By introducing four new permutation statistics, in Theorem~\ref{thm24}, we provide a novel combinatorial interpretation of the 
following polynomial
$$A_n\left(x_1,y_1,\frac{p x_2+q y_2}{p+q},p+q\right),$$
which contains a great deal of information about permutations and Stirling permutations.
Some special cases and variations of this polynomial have been investigated by Chen-Fu~\cite{Chen25}, 
Ji-Lin~\cite{Ji24}, Ji~\cite{Ji25}, Savage-Viswanathan~\cite{Savage2012}, Xu-Zeng~\cite{Xu25} and Yan-Yang-Lin~\cite{Yan26}.
Moreover, a special case of Theorem~\ref{thm24} says that
$$\sum_{\pi\in\ms_{n+1}}x^{\operatorname{pdes}(\pi)}y^{\operatorname{pasc}(\pi)}
=\sum_{\pi\in\msn}\left(x+y\right)^{\fix(\pi)}2^{\cyc(\pi)-\fix(\pi)},$$
where $\operatorname{pdes}$ and $\operatorname{pasc}$ are proper descent and proper ascent statistics.
When $y=-x$, we get 
$$\sum_{\pi\in\ms_{n+1}}x^{\operatorname{pdes}(\pi)+\operatorname{pasc}(\pi)}(-1)^{\operatorname{pasc}(\pi)}
=\sum_{\pi\in\md_n}2^{\cyc(\pi)},$$
where $\md_n$ is the set of all derangements in $\msn$, i.e., $\md_n=\{\pi\in\msn: \fix(\pi)=0\}$. Moreover,
In Section~\ref{section3}, we first provide a symmetric decomposition for the enumerator 
of the joint distribution of ascent-plateaux and left ascent-plateaux, 
and then we show that the $q$-ascent-plateau polynomials are bi-$\gamma$-positive, where $q$ marks the number of left-to-right minima. 
%%%%%%%%%%%%%%%%%%%%%%%%%%%%%%%%%%%%%%%%%%%%%%%%%%%%%%%%%%%%%%%%%%%%%%%%%%
%%%%%%%%%%%%%%%%%%%%%%%%%%%%%%%%%%%%%%%%%%%%
\section{A partial symmetric decomposition of the $1/k$-Eulerian polynomial}
%%%%%%%%%%%%%%%%%%%%%%%%%%%%%%%%%%%%%%%%%%%%%%%%%%%%%%%%%%%%%%%%%%%%%%%%%%%%%%%%%%%%
\subsection{Definitions and preliminaries}
%%%%%%%%%%%%%%%%%%%%%%%%%%%%%%%%%%%%%%%%%%%%%%%
%%%%%%%%%%%%%%%%%%%%%%%%%%%%%%%%%%%%%%%%%%%%%%%
%%%%%%%%%%%%%%%%%%%%%%%%%%%%%%%%%%%%%%%%%%%%%%%
%%%%%%%%%%%%%%%%%%%%%%%%%%%%%%%%%%%%%%%%%%%%%%%
%%%%%%%%%%%%%%%%%%%%%%%%%%%%%%%%%%%%%%%%%%%%%%%
%%%%%%%%%%%%%%%%%%%%%%%%%%%%%%%%%%%%%%%%%%%%%%%%%%%%%%%%%%%%%%%%%%%%%%%%%%%%%%%%%%%
\hspace*{\parindent}

Let $f(x)=\sum_{i=0}^nf_ix^i$ be a polynomial of degree $n$. We say that $f(x)$ is {\it unimodal} if there exists an index $k$ such that $f_0\leqslant f_1\leqslant f_2\leqslant \cdots \leqslant  f_{k-1}\leqslant f_k\geqslant f_{k+1}\geqslant \cdots\geqslant f_n$.
If $f(x)$ is symmetric, i.e., $f_i=f_{n-i}$ for all indices $0\leqslant i\leqslant n$,
then it can be uniquely expanded as
$$f(x)=\sum_{k=0}^{\lrf{{n}/{2}}}\gamma_kx^k(1+x)^{n-2k}.$$
The polynomial $f(x)$ is called {\it $\gamma$-positive} if $\gamma_k\geqslant 0$ for all $0\leqslant k\leqslant \lrf{{n}/{2}}$.
Clearly, $\gamma$-positivity implies unimodality. In~\cite{Foata70}, Foata and Sch\"utzenberger discovered that 
\begin{equation}\label{Anx-gamma}
A_n(x)=\sum_{k=1}^{\lrf{({n+1})/{2}}}a(n,k)x^k(1+x)^{n+1-2k},
\end{equation}
where the numbers $a(n,k)$ satisfy the recurrence relation
\begin{equation*}\label{ank-recu}
a(n,k)=ka(n-1,k)+(2n-4k+4)a(n-1,k-1),
\end{equation*}
with $a(1,1)=1$ and $a(1,k)=0$ for $k\geqslant 2$ (see~\cite[A101280]{Sloane}).
An elegant combinatorial proof of the $\gamma$-positivity of $A_n(x)$ can be found in~\cite{Branden08}.
The theory of $\gamma$-positivity has developed into 
an important and active topic in combinatorics and geometry, see~\cite{Athanasiadis17,Petersen15} for surveys.

%$$f_n\leqslant f_0\leqslant f_{n-1}\leqslant f_1\leqslant \cdots\leqslant f_{\lrf{n/2}}.$$
The polynomial $f(x)$ is said to be {\it alternatingly increasing} if
$$f_0\leqslant f_n\leqslant f_1\leqslant f_{n-1}\leqslant\cdots \leqslant f_{\lrf{{(n+1)}/{2}}}.$$
Setting
\begin{equation*}\label{ax-bx-prop01}
a(x)=\frac{f(x)-x^{n+1}f(1/x)}{1-x},~b(x)=\frac{x^nf(1/x)-f(x)}{1-x},
\end{equation*} 
one can get that $f(x)= a(x)+xb(x)$, see~\cite{Beck2010,Schepers13}. 
The ordered pair of polynomials $(a(x),b(x))$ is called the {\it symmetric decomposition} of $f(x)$, since $a(x)$ and $b(x)$ are both symmetric.
\begin{lemma}[{\cite[Lemma~2.1]{Beck2019}}]\label{lemma-alt}
Let $(a(x),b(x))$ be the symmetric decomposition of $f(x)$, where $\deg f(x)=\deg a(x)=n$ and $\deg b(x)=n-1$.
Then $f(x)$ is alternatingly increasing if and only if both $a(x)$ and $b(x)$ are unimodal.
\end{lemma}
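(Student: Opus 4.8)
\emph{Proof proposal.} The plan is to pass to coefficients and match the chain of inequalities defining ``alternatingly increasing'' against the unimodality inequalities of $a(x)$ and $b(x)$. Write $f(x)=\sum_{i=0}^nf_ix^i$, $a(x)=\sum_{i=0}^na_ix^i$ and $b(x)=\sum_{i=0}^{n-1}b_ix^i$, with the conventions $b_{-1}=0$ and $b_j=0$ for $j\geq n$. Comparing coefficients in $f(x)=a(x)+xb(x)$ gives $f_k=a_k+b_{k-1}$ for all $k$, while the palindromy of $a$ (of degree $n$) and of $b$ (of degree $n-1$) reads $a_k=a_{n-k}$ and $b_k=b_{n-1-k}$. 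Hence $f_{n-k}=a_{n-k}+b_{n-k-1}=a_k+b_k$, so that
\[
f_{n-k}-f_k=b_k-b_{k-1}\quad\text{and}\quad f_{k+1}-f_{n-k}=a_{k+1}-a_k
\]
for $0\leq k\leq\lrf{n/2}$ and $0\leq k\leq\lrf{(n-1)/2}$ respectively (for $k=0$ the first reads $f_n-f_0=b_0$).

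With these identities in hand, the chain $f_0\leq f_n\leq f_1\leq f_{n-1}\leq\cdots\leq f_{\lrf{(n+1)/2}}$ has successive differences, read left to right,
\[
b_0,\ a_1-a_0,\ b_1-b_0,\ a_2-a_1,\ b_2-b_1,\ \ldots,
\]
alternating between $a$-increments and $b$-increments. A short check on the parity of $n$ shows that the $a$-increments that occur are exactly $a_j-a_{j-1}$ for $1\leq j\leq\lrf{n/2}$ and the $b$-increments are exactly $b_j-b_{j-1}$ for $0\leq j\leq\lrf{(n-1)/2}$ (with $b_{-1}=0$); the last inequality of the chain is an $a$-increment when $n$ is even and a $b$-increment when $n$ is odd. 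Therefore $f$ is alternatingly increasing if and only if $b_0\geq0$, $a_0\leq a_1\leq\cdots\leq a_{\lrf{n/2}}$ and $b_0\leq b_1\leq\cdots\leq b_{\lrf{(n-1)/2}}$. Since $a$ is symmetric of degree $n$, the middle chain of inequalities for $a$ is equivalent to unimodality of $a$; since $b$ is symmetric of degree $n-1$, the analogous chain for $b$ is equivalent to unimodality of $b$; and the extra condition $b_0\geq0$ is the first inequality $f_0\leq f_n$ in the forward direction, while in the ``if'' direction it follows from $b$, being symmetric, having nonnegative coefficients (which is part of unimodality in this setting, since $b_0=b_{n-1}$ is a boundary coefficient). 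This establishes both implications.

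The entire argument is forced by $f=a+xb$ together with the palindromy of $a$ and $b$, so the only step that needs genuine care — and the one I expect to be the mild obstacle — is the index-and-parity bookkeeping: one must verify that the two interleaved halves of the alternating chain of $f$ cover precisely the range $1\leq j\leq\lrf{n/2}$ for the $a$-increments and $0\leq j\leq\lrf{(n-1)/2}$ for the $b$-increments, with neither overlap nor omission at the turnaround point of the chain. This is exactly where the hypotheses $\deg a=n$ and $\deg b=n-1$ are used; once it is checked, the equivalence with unimodality of $a$ and $b$ is immediate.
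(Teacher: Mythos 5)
The paper offers no proof of this lemma --- it is imported verbatim from \cite{Beck2019} --- so there is nothing in-paper to compare against; your proposal has to be judged on its own. The coefficient-matching argument you give is the standard (and, as far as I know, the original) one, and its core is correct: from $f_k=a_k+b_{k-1}$ and $f_{n-k}=a_k+b_k$ the successive differences along the chain $f_0\leqslant f_n\leqslant f_1\leqslant f_{n-1}\leqslant\cdots$ are exactly the interleaved increments $b_0-b_{-1},\,a_1-a_0,\,b_1-b_0,\,a_2-a_1,\ldots$, and your parity bookkeeping (the $a$-increments $a_j-a_{j-1}$ for $1\leqslant j\leqslant\lrf{n/2}$, the $b$-increments $b_j-b_{j-1}$ for $0\leqslant j\leqslant\lrf{(n-1)/2}$ with $b_{-1}=0$, the chain terminating in an $a$-increment for $n$ even and a $b$-increment for $n$ odd) checks out.

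The one step that does not hold up is your disposal of the condition $b_0\geqslant 0$ in the ``if'' direction. Under the definition of unimodality used in this paper (existence of a peak index, with no positivity requirement), a symmetric unimodal polynomial may have negative coefficients, so $b_0\geqslant 0$ is a genuine extra hypothesis and not ``part of unimodality.'' Concretely, $f(x)=2+2x+x^2$ has symmetric decomposition $a(x)=2+3x+2x^2$, $b(x)=-1-x$ (with $n=2$, so the degree hypotheses of the lemma are met); both $a$ and $b$ are symmetric and unimodal in the paper's sense, yet $f_0=2>1=f_2$, so $f$ is not alternatingly increasing. The equivalence as you derive it is: $f$ is alternatingly increasing if and only if $a$ is unimodal and $b$ is unimodal \emph{with nonnegative coefficients} (equivalently $b_0\geqslant 0$). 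In the setting where the lemma is applied --- enumerating polynomials, hence nonnegative coefficients throughout, as in the $h^*$-polynomial context of \cite{Beck2019} --- this extra condition is automatic, but it should be stated as a standing assumption rather than deduced from unimodality. With that amendment your proof is complete.
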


If $a(x)$ and $b(x)$ are both $\gamma$-positive, we say that $f(x)$ is {\it bi-$\gamma$-positive}.
It follows from Lemma~\ref{lemma-alt} that bi-$\gamma$-positivity is stronger than alternatingly increasing property. 
For instance,
\begin{align*}
M_2(x)&=1+x+x,~
M_3(x)=1+7x+x^2+x(3+3x),\\
M_4(x)&=1+29x+29x^2+x^3+x(7+31x+7x^2),\\
M_5(x)&=1+101x+321x^2+101x^3+x^4+x(15+195x+195x^2+15x^3).
\end{align*}
A fundamental result of~\cite{Ma24} can be restated as follows.
\begin{theorem}[{\cite[Theorem~3.6]{Ma24}}]\label{ANK-bigamma}
The $1/k$-Eulerian polynomials $A_n^{(k)}(x)$ are bi-$\gamma$-positive.
\end{theorem}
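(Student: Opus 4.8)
The plan is to follow the symmetric decomposition through the recursion~\eqref{Anj-recu00}. Since $\deg A_n^{(k)}(x)=n-1$ with constant term $1$, write $\widetilde A_n(x)=x^{n-1}A_n^{(k)}(1/x)$ for the reversal and set
\[
a_n(x)=\frac{A_n^{(k)}(x)-x\widetilde A_n(x)}{1-x},\qquad b_n(x)=\frac{\widetilde A_n(x)-A_n^{(k)}(x)}{1-x},
\]
so that $A_n^{(k)}(x)=a_n(x)+xb_n(x)$ with $a_n$ palindromic of degree $n-1$ and $b_n$ palindromic of degree $n-2$; bi-$\gamma$-positivity is exactly the assertion that $a_n$ and $b_n$ are $\gamma$-positive. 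Replacing $x$ by $1/x$ in~\eqref{Anj-recu00} and multiplying by $x^{n}$ produces the companion recursion
\[
\widetilde A_{n+1}(x)=\bigl(k+(k(n-1)+1)x\bigr)\widetilde A_n(x)+kx(1-x)\widetilde A_n'(x),\qquad \widetilde A_1(x)=1.
\]
Substituting $A_n^{(k)}=a_n+xb_n$ and $\widetilde A_n=a_n+b_n$ into~\eqref{Anj-recu00} and into this companion recursion and comparing the two, all denominators $1-x$ cancel and one is left with the coupled recursion
\begin{align*}
a_{n+1}(x)&=\bigl(1+(k(n-1)+1)x\bigr)a_n(x)+xb_n(x)+kx(1-x)a_n'(x),\\
b_{n+1}(x)&=(k-1)a_n(x)+k\bigl((n-1)x+1\bigr)b_n(x)+kx(1-x)b_n'(x),
\end{align*}
with $a_1(x)=1$ and $b_1(x)=0$.

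From here I would argue by induction on $n$ in the $\gamma$-basis $u_m^{(j)}:=x^{j}(1+x)^{m-2j}$, $0\le j\le\lfloor m/2\rfloor$, of the symmetric polynomials of degree $m$. A short differentiation shows that the two operators occurring in the coupled recursion are \emph{bidiagonal with nonnegative coefficients} in this basis:
\begin{align*}
\bigl((1+x)+km\,x+kx(1-x)\tfrac{\mathrm{d}}{\mathrm{d}x}\bigr)u_m^{(j)}&=(kj+1)\,u_{m+1}^{(j)}+2k(m-2j)\,u_{m+1}^{(j+1)},\\
k\bigl(1+(m+1)x+x(1-x)\tfrac{\mathrm{d}}{\mathrm{d}x}\bigr)u_m^{(j)}&=k(j+1)\,u_{m+1}^{(j)}+2k(m-2j)\,u_{m+1}^{(j+1)},
\end{align*}
together with the trivial $x\cdot u_m^{(j)}=u_{m+2}^{(j+1)}$. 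Writing $a_n=\sum_j\gamma_{n,j}u_{n-1}^{(j)}$ and $b_n=\sum_j\delta_{n,j}u_{n-2}^{(j)}$ and feeding these into the coupled recursion, everything collapses to
\begin{align*}
\gamma_{n+1,i}&=(ki+1)\gamma_{n,i}+2k(n+1-2i)\gamma_{n,i-1}+\delta_{n,i-1},\\
\delta_{n+1,i}&=(k-1)\gamma_{n,i}+k(i+1)\delta_{n,i}+2k(n-2i)\delta_{n,i-1}.
\end{align*}
In the relevant index ranges the factors $2k(n+1-2i)$ and $2k(n-2i)$ are nonnegative, and $k\ge 1$ makes $(k-1)\gamma_{n,i}$ nonnegative; starting from $\gamma_{1,0}=1$ and $\gamma_{1,j}=\delta_{1,j}=0$ otherwise, the induction closes and $a_n,b_n$ are $\gamma$-positive. (One checks painlessly that this reproduces $a_2=1+x$, $b_2=k-1$, $a_3=(1+x)^2+(3k-1)x$, $a_4=(1+x)^3+(4k^2+6k-2)x(1+x)$, and at $k=2$ the lists displayed before the statement.)

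The computations are elementary; the two places where care is needed are the derivation of the companion and coupled recursions --- a bookkeeping exercise with the reversal operator $f\mapsto x^{\deg f}f(1/x)$ and the identity $\widetilde A_n'(x)=(n-1)x^{n-2}A_n^{(k)}(1/x)-x^{n-3}(A_n^{(k)})'(1/x)$ --- and the bidiagonal lemma with exactly the coefficients shown. The content of the latter is that the linear term $km\,x$ is precisely what absorbs the a priori negative contribution of $kx(1-x)\tfrac{\mathrm{d}}{\mathrm{d}x}$ applied to a $\gamma$-monomial; this is the $\gamma$-positivity mechanism for the classical Eulerian polynomials $A_n(x)$, promoted from $k=1$ to general $k$, and the coupling terms $xb_n$ into $a_{n+1}$ and $(k-1)a_n$ into $b_{n+1}$ cause no harm. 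An alternative, presumably the route taken in~\cite{Ma24}, is to realize $A_n^{(k)}(x)$ by a context-free grammar and obtain the decomposition from a change of grammar; the underlying cancellation is the same.
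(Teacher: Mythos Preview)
Your argument is correct. The companion recursion for $\widetilde A_n$, the coupled system for $(a_n,b_n)$, the two bidiagonal identities in the $\gamma$-basis, and the resulting nonnegative recursion for $(\gamma_{n,i},\delta_{n,i})$ all check out line by line (including the range check $n+1-2i\geqslant 0$ and $n-2i\geqslant 0$ on the relevant support, and $(k-1)\geqslant 0$).

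Note that the present paper does not give its own proof of this theorem; it cites~\cite{Ma24}. What the paper does display is precisely the $k=2$ instance of your coupled recursion in~\eqref{Stirling-recu4}, attributed to~\cite[Proposition~3.15]{Ma24}, and in Theorem~\ref{thm2} it proves a $q$-refinement (for $k=2$) via the grammar $G=\{I\to qIy^2,\,x\to 2xy^2,\,y\to xy\}$ together with the change of variables $u=xy^2$, $v=x+y^2$; at $q=1$ the resulting system~\eqref{recusystem02} is exactly your $(\gamma,\delta)$-recursion with $k=2$ (after the shift $\gamma_{n,i}=f_{n-1,i}(1)$, $\delta_{n,i}=g_{n-1,i}(1)$). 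So your route and the route of~\cite{Ma24}/this paper are the same argument in two languages: you derive the coupled recursion directly from~\eqref{Anj-recu00} and its reversal and then pass to the $\gamma$-basis by hand, while the grammar approach packages both steps into the substitution $u=xy^2$, $v=x+y^2$. Your closing remark anticipating this is on the mark; there is no genuinely different idea on either side, and what each approach buys is purely stylistic --- yours is self-contained and elementary, theirs generalizes more readily to further statistics (as in the $q$-analog of Theorem~\ref{thm2}).
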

Recently, Yan-Yang-Lin~\cite{Yan26} provided a remarkable combinatorial interpretation 
for the bi-$\gamma$-coefficients of $A_n^{(k)}(x)$ by using the model of certain 
ordered labeled forests. In this paper, we give some original and substantial extensions of the $1/k$-Eulerian polynomials.

A {\it context-free grammar} $G$ over an alphabet
$V$ is defined as a set of substitution rules replacing a letter in $V$ by a formal function over $V$.
The formal derivative $D_G$ with respect to $G$ satisfies the derivation rules:
$D_G(u+v)=D_G(u)+D_G(v),~D_G(uv)=D_G(u)v+uD_G(v)$.
So the {\it Leibniz rule} holds:
\begin{equation*}\label{Leibniz}
D_G^n(uv)=\sum_{k=0}^n\binom{n}{k}D_G^k(u)D_G^{n-k}(v).
\end{equation*}

There have been two methods proposed in recent years: grammatical labelings~\cite{Chen17,Dumont96,Ma26} and the change of grammars~\cite{Chen23,Ji25,Ma19,Ma24}. 
In~\cite{Dumont96}, Dumont discovered the context-free grammar for Eulerian polynomials by using a grammatical labeling of circular permutations.
\begin{proposition}[{\cite[Section~2.1]{Dumont96}}]\label{grammar03}
Let $G=\{a\rightarrow ab, b\rightarrow ab\}$.
Then for $n\geqslant 1$, one has
\begin{equation*}
D_{G}^n(a)=D_{G}^n(b)=b^{n+1}A_n\left(\frac{a}{b}\right).
\end{equation*}
\end{proposition}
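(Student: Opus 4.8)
The plan is to prove the identity by induction on $n$, turning the action of the formal derivative $D_G$ into the known differential recursion for the Eulerian polynomials via a homogeneity substitution. First note that since $D_G(a)=ab=D_G(b)$, for every $n\geqslant 1$ one has $D_G^n(b)=D_G^{n-1}(D_G(b))=D_G^{n-1}(ab)=D_G^{n-1}(D_G(a))=D_G^n(a)$, so it suffices to compute $D_G^n(a)$. Each application of $D_G$ raises total degree by one, so $D_G^n(a)$ is homogeneous of degree $n+1$ in $a,b$; it is therefore natural to set $u=a/b$ and write $D_G^n(a)=b^{n+1}p_n(u)$ for a polynomial $p_n$, the goal being to show $p_n=A_n$.

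The key computational step is the rule: for any polynomial $P(x)$ and any $m\geqslant 0$, with $u=a/b$,
\[
D_G\!\left(b^m P(u)\right)=b^{m+1}\Bigl(m\,u\,P(u)+u(1-u)P'(u)\Bigr).
\]
This is checked by expanding $P(x)=\sum_k P_k x^k$, so that $b^m P(u)=\sum_k P_k a^k b^{m-k}$, applying the product rule together with $D_G(a)=D_G(b)=ab$ to each monomial $a^k b^{m-k}$, and collecting terms after re-expressing everything in $u$; the only care needed is bookkeeping the index shifts $k\mapsto k\pm1$. Granting this, the induction is immediate. For $n=1$, $D_G(a)=ab=b^2(a/b)=b^2A_1(a/b)$ since $A_1(x)=x$. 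Assuming $D_G^n(a)=b^{n+1}A_n(u)$, the rule with $m=n+1$ and $P=A_n$ gives
\[
D_G^{n+1}(a)=b^{n+2}\Bigl((n+1)u\,A_n(u)+u(1-u)A_n'(u)\Bigr)=b^{n+2}A_{n+1}(u),
\]
where the last equality is exactly the recursion $A_{n+1}(x)=(n+1)xA_n(x)+x(1-x)\frac{\mathrm{d}}{\mathrm{d}x}A_n(x)$ recorded in the introduction. This closes the induction.

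An alternative, combinatorial route follows Dumont's original labeling. Read $\pi\in\msn$ as the word $0\,\pi(1)\cdots\pi(n)\,0$ and attach to each of its $n+1$ consecutive pairs the label $a$ if it is a descent and $b$ if it is an ascent, so that $\pi$ carries weight $a^{\des(\pi)}b^{n+1-\des(\pi)}$. Inserting the new largest letter $n+1$ into any one of the $n+1$ slots turns that slot into an ascent followed by a descent, hence replaces its label (whether $a$ or $b$) by the product $ab$, leaving the other labels untouched; summing over the $n+1$ insertion positions shows that $D_G$ carries $\sum_{\pi\in\msn}\mathrm{wt}(\pi)$ to $\sum_{\pi\in\ms_{n+1}}\mathrm{wt}(\pi)$, and the base case $D_G(a)=ab$ matches $\ms_1$. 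Either way there is no genuine obstacle here; the only point demanding attention is to carry out the homogeneity substitution $u=a/b$ (equivalently, the grammatical labeling) cleanly enough that the grammar recursion matches the differential recursion for $A_n$ term by term.
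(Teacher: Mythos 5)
Your proof is correct. Note first that the paper itself gives no proof of this proposition: it is quoted verbatim from Dumont, with only the remark that Dumont obtained it via a grammatical labeling of circular permutations. So there is nothing in the paper to match your argument against line by line; both of your routes are sound and standard. Your first route is a clean algebraic verification: the substitution rule $D_G\bigl(b^mP(u)\bigr)=b^{m+1}\bigl(muP(u)+u(1-u)P'(u)\bigr)$ with $u=a/b$ checks out on monomials (one gets $D_G(a^kb^{m-k})=b^{m+1}\bigl(ku^k+(m-k)u^{k+1}\bigr)$, which agrees with the claimed operator), and with $m=n+1$ it reproduces exactly the recursion $A_{n+1}(x)=(n+1)xA_n(x)+x(1-x)A_n'(x)$ recorded in the paper's introduction, so the induction closes. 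Your second, combinatorial route is essentially Dumont's original argument, transplanted from circular permutations to linear words $0\,\pi(1)\cdots\pi(n)\,0$: with the paper's boundary convention $\pi(0)=\pi(n+1)=0$ the word has $n+1$ adjacent pairs of which exactly $\des(\pi)$ are descents, so the weight is $a^{\des(\pi)}b^{n+1-\des(\pi)}$ and inserting $n+1$ into any slot replaces that slot's label by $ab$, matching the grammar; this also gives the identity $D_G^n(a)=D_G^n(b)$ immediately from $D_G(a)=D_G(b)$, as you note. Either route would serve as a complete proof; the algebraic one buys brevity and requires only the recursion, while the labeling one explains why the grammar encodes the Eulerian statistic and is the argument the paper's citation points to.
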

\subsection{Partial symmetric decompositions}
%%%%%%%%%%%%%%%%%%%%%%%%%%%%%%%%%%%%%%%%%%%%%%%
%%%%%%%%%%%%%%%%%%%%%%%%%%%%%%%%%%%%%%%%%%%%%%%
%%%%%%%%%%%%%%%%%%%%%%%%%%%%%%%%%%%%%%%%%%%%%%%
%%%%%%%%%%%%%%%%%%%%%%%%%%%%%%%%%%%%%%%%%%%%%%%
%%%%%%%%%%%%%%%%%%%%%%%%%%%%%%%%%%%%%%%%%%%%%%%
%%%%%%%%%%%%%%%%%%%%%%%%%%%%%%%%%%%%%%%%%%%%%%%%%%%%%%%%%%%%%%%%%%%%%%%%%%%%%%%%%%%%
\hspace*{\parindent}

In the sequel, we shall first deduce a grammar for the $1/k$-Eulerian polynomials by using differential operators.
According to~\cite[Eq.~(5)]{Savage2012}, we have
\begin{equation}\label{Ankx-def04}
\sum_{t\geqslant 0}\binom{t-1+\frac{1}{k}}{t}(kt+1)^nx^t=\frac{A_n^{(k)}(x)}{(1-x)^{n+\frac{1}{k}}}.
\end{equation}
Since $$\frac{1}{(1-x)^{1/k}}=\sum_{t\geqslant 0}\binom{t-1+\frac{1}{k}}{t}x^t,$$
it follows from~\eqref{Ankx-def04} that
\begin{equation}\label{Ankx-def05}
\left(kx\frac{\mathrm{d}}{\mathrm{d}x}+1\right)^n\frac{1}{(1-x)^{1/k}}=\frac{A_n^{(k)}(x)}{(1-x)^{n+\frac{1}{k}}}.
\end{equation}
Using~\eqref{Anj-recu00}, it is routine to verify that an equivalent of~\eqref{Ankx-def05} is given as follows:
\begin{equation}\label{Ankx-def06}
%&\left(\frac{x}{(1-x)^k}\frac{d}{dx}\right)^n\frac{1}{1-x}=\frac{C_n(x;k+1)}{(1-x)^{n+kn+1}},\\
\left(kx\frac{\mathrm{d}}{\mathrm{d}x}\right)^n\frac{1}{(1-x)^{1/k}}=\frac{x^nA_n^{(k)}(1/x)}{(1-x)^{n+\frac{1}{k}}}.
\end{equation}
%\begin{proposition}\label{Ankx}
%For $n\geqslant 1$, we have
%\begin{equation*}\label{Ankx-Explicit}
%A_n^{(k)}(x)=\sum_{i=1}^n\Stirling{n}{i}k^{n-i}\prod_{j=0}^{i-1}(1+jk)(x-1)^{n-i}.
%\end{equation*}
%\end{proposition}
%\begin{proof}
%It is well known that
%\begin{equation*}\label{Stirling-def}
%\left(x\frac{\mathrm{d}}{\mathrm{d}x}\right)^n=\sum_{k=0}^n\Stirling{n}{k}x^k\frac{\mathrm{d}^k}{\mathrm{d}x^k},
%\end{equation*}
%where $\Stirling{n}{k}$ is the {\it Stirling number of the second kind}, i.e., the number of partitions of $[n]=\{1,2,\ldots,n\}$ into $k$ blocks.
%If $T=kx\frac{\mathrm{d}}{\mathrm{d}x}$, then
%$$T^n(a)=\sum_{i=1}^n\Stirling{n}{i}k^nx^i\left(\frac{\mathrm{d}}{\mathrm{d}x}\right)^i\frac{1}{(1-x)^{1/k}}=\sum_{i=1}^n\Stirling{n}{i}k^{n-i}\prod_{j=0}^{i-1}(1+jk)x^i\frac{1}{(1-x)^{\frac{1}{k}+i}}.$$
%So we have
%\begin{equation*}\label{Ankx-exp}
%x^nA_n^{(k)}(1/x)=\sum_{i=1}^n\Stirling{n}{i}k^{n-i}\prod_{j=0}^{i-1}(1+jk)x^i(1-x)^{n-i},
%\end{equation*}
%which yields the desired explicit formula.
%\end{proof}

Setting $$T=kx\frac{\mathrm{d}}{\mathrm{d}x},~~a=\frac{1}{(1-x)^{1/k}},~~b=\left(\frac{x}{1-x}\right)^{1/k},$$
then $T(a)=ab^k,~T(b)=a^kb$. So we get the grammar $G=\{a\rightarrow ab^k,~b\rightarrow a^kb\}$.
Note that $ab^{kn}={x^n}/{(1-x)^{n+\frac{1}{k}}}$ and ${a^k}/{b^k}={1}/{x}$.
By~\eqref{Ankx-def06}, we arrive at the following result. 
\begin{lemma}\label{Ank-grammar}
Let $G=\{a\rightarrow ab^k,~b\rightarrow a^kb\}$. We have $D_{G}^n(a)=ab^{kn}A_n^{(k)}\left(\frac{a^k}{b^k}\right)$.
\end{lemma}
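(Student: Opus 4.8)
The plan is to verify the formula $D_G^n(a) = ab^{kn}A_n^{(k)}(a^k/b^k)$ by induction on $n$, using the equivalence between the grammar $G$ and the differential operator $T = kx\frac{\mathrm{d}}{\mathrm{d}x}$ established in the paragraph preceding the lemma. The base case $n=0$ is immediate: $D_G^0(a) = a$ and $A_0^{(k)}(x) = 1$, matching $ab^0 \cdot 1 = a$. For the inductive step, one substitutes $a = (1-x)^{-1/k}$ and $b = (x/(1-x))^{1/k}$ and checks directly that $T(a) = ab^k$ and $T(b) = a^kb$; since $D_G$ acts on the polynomial algebra generated by $a,b$ by the derivation rules while $T$ acts by the product/chain rule, the two operators agree on everything in the algebra generated by $a$ and $b$. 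Hence $D_G^n(a)$, when specialized at these values of $a,b$, equals $T^n(a) = T^n\!\left((1-x)^{-1/k}\right)$, which by equation~\eqref{Ankx-def06} equals $x^n A_n^{(k)}(1/x)/(1-x)^{n+1/k}$.

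Next I would translate this back into the grammatical variables. Using the two identities recorded just before the lemma, namely $ab^{kn} = x^n/(1-x)^{n+1/k}$ and $a^k/b^k = 1/x$, one sees that
\begin{equation*}
ab^{kn}A_n^{(k)}\!\left(\frac{a^k}{b^k}\right) = \frac{x^n}{(1-x)^{n+1/k}}\,A_n^{(k)}\!\left(\frac{1}{x}\right),
\end{equation*}
which is exactly $T^n(a)$ as computed above. Since both $D_G^n(a)$ and the claimed right-hand side are elements of the polynomial algebra $\mathbb{Q}[a,b]$ (the right side because $A_n^{(k)}$ has degree at most $n$, so $ab^{kn}A_n^{(k)}(a^k/b^k)$ is a genuine polynomial in $a,b$), and since they agree after the specialization $a \mapsto (1-x)^{-1/k}$, $b \mapsto (x/(1-x))^{1/k}$, it remains only to argue that this specialization is injective enough to conclude equality of the two grammatical expressions.

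The main obstacle is precisely this last point: the substitution sends two formal variables to two algebraically dependent functions of a single variable $x$ (indeed $a^k(1-x) = 1$ relates them, and $b^k = xa^k$), so a priori agreement after specialization does not force agreement as polynomials in $a,b$. The cleanest fix is to avoid the specialization argument for the inductive step altogether: instead, prove the formula purely grammatically by induction, computing $D_G\!\left(ab^{kn}A_n^{(k)}(a^k/b^k)\right)$ directly via the Leibniz rule and the substitution rules $a \to ab^k$, $b \to a^kb$, and checking that the result matches $ab^{k(n+1)}A_{n+1}^{(k)}(a^k/b^k)$ by invoking the recursion~\eqref{Anj-recu00}. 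Concretely, writing $u = a^k/b^k$ one has $D_G(u) = ku(b^k - a^k) \cdot$(a unit), so $D_G$ acting on $u$ produces a factor $k u(1 - u)$ up to the appropriate power of $b$, while $D_G$ acting on the prefactor $ab^{kn}$ contributes the $(nkx+1)$-type term; combining these reproduces~\eqref{Anj-recu00} after clearing $b^{k(n+1)}$. This route is slightly more computation but entirely routine, and it sidesteps the dependency issue since everything stays in $\mathbb{Q}[a,b]$ throughout; the specialization identity~\eqref{Ankx-def06} then serves only as the motivation and as a consistency check rather than as a step in the proof.
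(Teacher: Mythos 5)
Your proposal is correct, and its first half is essentially the paper's own proof: the authors introduce $T=kx\frac{\mathrm{d}}{\mathrm{d}x}$, check $T(a)=ab^k$ and $T(b)=a^kb$ for the specific functions $a=(1-x)^{-1/k}$ and $b=(x/(1-x))^{1/k}$, and read the lemma off from~\eqref{Ankx-def06} together with the identities $ab^{kn}=x^n/(1-x)^{n+1/k}$ and $a^k/b^k=1/x$. The dependency issue you raise is worth flagging, but it is not actually fatal to that route: each substitution rule of $G$ raises total degree in $a,b$ by exactly $k$, so $D_G^n(a)$ is homogeneous of degree $1+kn$, as is $ab^{kn}A_n^{(k)}(a^k/b^k)$; and on a fixed homogeneous component the specialization sends the monomials $a^ib^j$ with $i+j=1+kn$ to the linearly independent functions $x^{j/k}(1-x)^{-(1+kn)/k}$, hence is injective there. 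Thus equality after specialization does force equality in $\mathbb{Q}[a,b]$, and the paper's argument closes once this (unstated) observation is supplied. Your alternative completion --- a direct induction inside $\mathbb{Q}[a,b]$ using $D_G(ab^{kn})=ab^{kn}(b^k+kna^k)$ and $D_G(a^k/b^k)=ka^k\bigl(1-a^k/b^k\bigr)$, which after factoring out $ab^{k(n+1)}$ reproduces exactly the recursion~\eqref{Anj-recu00} for $A_{n+1}^{(k)}$ --- is also correct, and is arguably the cleaner of the two since it never leaves the polynomial algebra and relegates~\eqref{Ankx-def06} to motivation. Either route is acceptable; if you keep the specialization argument, state the homogeneity and injectivity step explicitly rather than treating it as an obstacle.
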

%Then upon taking $a=\frac{1}{(1-x)^{1/k}}$ and $b=\left(\frac{x}{1-x}\right)^{1/k}$, we obtain
%$$D_{G}^n(a)=\frac{x^nA_n^{(k)}(1/x)}{(1-x)^{n+\frac{1}{k}}}.$$

As an extension of~\eqref{Anx-gamma},
we can now conclude the first main result of this paper.
\begin{theorem}\label{thm1}
The $1/k$-Eulerian polynomials are partial $\gamma$-positive. More precisely, 
we have
\begin{equation}\label{thm001}
A_{n+1}^{(k)}(x)=\sum_{i\geqslant 0}(1+kx)^i\sum_{j\geqslant 0}\gamma_{n,i,j}(k)x^j(1+x)^{n-i-2j}.
\end{equation}
Let $\gamma_{n}(x,y)=\sum_{i\geqslant 0}\sum_{j\geqslant 0}\gamma_{n,i,j}(k)x^iy^j$. Then the coefficient polynomials $\gamma_{n}(x,y)$ satisfy
$$\gamma_{n+1}(x,y)=(x+2kny)\gamma_{n}(x,y)+y(k+k^2-2kx)\frac{\partial}{\partial x}\gamma_{n}(x,y)+ky(1-4y)\frac{\partial}{\partial y}\gamma_{n}(x,y),$$
with the initial values $\gamma_{1}(x,y)=x$ and $\gamma_{2}(x,y)=x^2+(k+k^2)y$.
\end{theorem}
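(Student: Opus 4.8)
The plan is to run the change-of-grammars technique on the grammar $G=\{a\to ab^{k},\ b\to a^{k}b\}$ furnished by Lemma~\ref{Ank-grammar}. Since $D_{G}(a)=ab^{k}$, one has $D_{G}^{n+1}(a)=D_{G}^{n}(ab^{k})$, and by Lemma~\ref{Ank-grammar} this equals $ab^{k(n+1)}A_{n+1}^{(k)}(a^{k}/b^{k})$. Writing $x=a^{k}/b^{k}$, the three building blocks appearing in~\eqref{thm001} correspond, up to a common power of $b^{k}$, to $1+kx=(ka^{k}+b^{k})/b^{k}$, to $x=a^{k}/b^{k}$, and to $1+x=(a^{k}+b^{k})/b^{k}$. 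This motivates introducing the four new letters
\[
s=ab^{k},\qquad p=ka^{k}+b^{k},\qquad q=(ab)^{k},\qquad r=a^{k}+b^{k}.
\]

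First I would check that $G$ closes on $\{s,p,q,r\}$: using $D_{G}(a^{k})=D_{G}(b^{k})=k(ab)^{k}$ one gets
\[
D_{G}(s)=sp,\qquad D_{G}(p)=k(k+1)q,\qquad D_{G}(q)=kqr,\qquad D_{G}(r)=2kq.
\]
Setting $\Phi_{n}=D_{G}^{n}(s)/s$, the product rule together with $D_{G}(s)=sp$ gives $\Phi_{n+1}=p\Phi_{n}+D_{G}(\Phi_{n})$. Then I would prove by induction on $n$ that
\[
\Phi_{n}=\sum_{i,j\geqslant 0}\gamma_{n,i,j}(k)\,p^{i}q^{j}r^{\,n-i-2j},
\]
where the sum runs over $i+2j\leqslant n$, so every exponent is a nonnegative integer, and every coefficient $\gamma_{n,i,j}(k)$ is a nonnegative integer. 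The inductive step uses the expansion
\[
D_{G}\bigl(p^{i}q^{j}r^{\ell}\bigr)=k(k+1)i\,p^{i-1}q^{j+1}r^{\ell}+kj\,p^{i}q^{j}r^{\ell+1}+2k\ell\,p^{i}q^{j+1}r^{\ell-1},
\]
and one checks that in each of the four resulting families of monomials the balance $n+1=(\text{deg in }p)+2(\text{deg in }q)+(\text{deg in }r)$ is preserved and that the exponent $\ell=n-i-2j$ of $r$ never becomes negative, the only term lowering it having coefficient $2k\ell$, which vanishes at $\ell=0$. Since all four contributions carry nonnegative coefficients, nonnegativity of $\gamma_{n,i,j}(k)$ propagates, which is exactly the partial $\gamma$-positivity. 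Extracting the coefficient of $p^{i}q^{j}r^{\,n+1-i-2j}$ from $\Phi_{n+1}$ and encoding the answer through $\gamma_{n}(x,y)=\sum_{i,j}\gamma_{n,i,j}(k)x^{i}y^{j}$, the four contributions become, respectively, multiplication by $x$, the operator $k(k+1)y\frac{\partial}{\partial x}$, the operator $ky\frac{\partial}{\partial y}$, and the operator $2ky\bigl(n-x\frac{\partial}{\partial x}-2y\frac{\partial}{\partial y}\bigr)$ (the last because $\ell=n-i-2j$). Their sum is precisely the recursion in the statement, and the cases $n=1,2$ give $\Phi_{1}=p$ and $\Phi_{2}=p^{2}+k(k+1)q$, i.e.\ $\gamma_{1}(x,y)=x$ and $\gamma_{2}(x,y)=x^{2}+(k+k^{2})y$.

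To conclude, I would translate back to $A_{n+1}^{(k)}$. Substituting $s=ab^{k}$, $p=b^{k}(1+kx)$, $q=b^{2k}x$ and $r=b^{k}(1+x)$ with $x=a^{k}/b^{k}$, each monomial satisfies $s\,p^{i}q^{j}r^{\,n-i-2j}=ab^{k(n+1)}(1+kx)^{i}x^{j}(1+x)^{\,n-i-2j}$, because the total power of $b$ is $k\bigl(1+i+2j+(n-i-2j)\bigr)=k(n+1)$, independently of $(i,j)$. Hence
\[
ab^{k(n+1)}A_{n+1}^{(k)}(x)=D_{G}^{n+1}(a)=D_{G}^{n}(s)=ab^{k(n+1)}\sum_{i,j\geqslant 0}\gamma_{n,i,j}(k)(1+kx)^{i}x^{j}(1+x)^{\,n-i-2j},
\]
and cancelling $ab^{k(n+1)}$ yields~\eqref{thm001}.

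The step I expect to be the main obstacle is the very first one: finding a quadruple of letters that simultaneously closes under $G$ and, after back-substitution, reassembles into exactly the factors $1+kx$, $x$, $1+x$ with a single uniform power of $b$. Once these letters are in hand, the remainder is a bookkeeping induction; the only subtle point is the nonnegativity of the exponent $\ell=n-i-2j$ throughout, which is precisely the reason that the manifestly negative $-2kx$ inside the coefficient of $\frac{\partial}{\partial x}\gamma_{n}$ causes no harm to positivity.
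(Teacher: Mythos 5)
Your proposal is correct and follows essentially the same route as the paper: your letters $s,p,q,r$ are exactly the paper's $I=a\beta$, $w=\beta+k\alpha$, $u=\alpha\beta$, $v=\alpha+\beta$ (with $\alpha=a^k$, $\beta=b^k$), your derivation rules and the resulting recursion for $\gamma_{n,i,j}(k)$ coincide with theirs, and the back-substitution matches the paper's specialization $I=a$, $w=1+kx$, $u=x$, $v=1+x$. The only cosmetic difference is that the paper first passes to an intermediate grammar $H$ on $\{a,b,\alpha,\beta\}$ before the change of variables, while you work directly with $G$.
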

\begin{proof}
Consider $G=\{a\rightarrow ab^k,~b\rightarrow a^kb\}$. Setting $\alpha=a^k$ and $\beta=b^k$, then 
$D_G(\alpha)=D_G(\beta)=k\alpha\beta$. So we get a new grammar $H=\{a\rightarrow a\beta,~b\rightarrow \alpha b,~\alpha\rightarrow k\alpha\beta,~\beta\rightarrow k\alpha\beta\}$.
It follows from~\eqref{Ankx-def02} and Lemma~\eqref{Ank-grammar} that 
\begin{equation}\label{DHA}
D_H^n(a)|_{\beta=1,\alpha=x}=aA_n^{(k)}(x)=a\sum_{\pi\in\ms_{n+1}}x^{\exc(\pi)}k^{n-\cyc(\pi)}.
\end{equation}

Consider a change of $H$. 
Set $I=a\beta,~w=\beta+k\alpha,~u=\alpha\beta$ and $v=\alpha+\beta$. Then
$$D_H(I)=Iw,~D_H(w)=(k+k^2)u,~D_H(u)=kuv,~D_H(v)=2ku.$$
Note that $D_H^2(I)=Iw^2+(k+k^2)Iu,~D_H^3(I)=Iw^3+3(k+k^2)Iwu+k(k+k^2)Iuv$.
By induction, it is routine to check that there exists nonnegative integers $\gamma_{n,i,j}(k)$ such that 
\begin{equation}\label{DHI}
D_H^n(I)=I\sum_{i\geqslant 0}w^i\sum_{j\geqslant 0}\gamma_{n,i,j}(k)u^jv^{n-i-2j},
\end{equation}
where the coefficients $\gamma_{n,i,j}(k)$ satisfy the recursion
$$\gamma_{n+1,i,j}(k)=\gamma_{n,i-1,j}(k)+(k+k^2)(i+1)\gamma_{n,i+1,j-1}(k)+kj\gamma_{n,i,j}(k)+2k(n-i-2j+2)\gamma_{n,i,j-1}(k),$$
with $\gamma_{1,1,0}(k)=1$ and $\gamma_{1,i,j}(k)=0$ for all $(i,j)\neq (1,0)$. Multiplying both sides of the above recursion by $x^iy^j$ and summing over all $i,j$, we 
get the recursion of $\gamma_n(x,y)$.
By~\eqref{DHA}, we obtain $$aA_{n+1}^{(k)}(x)=D_H^{n+1}(a)|_{\beta=1,\alpha=x}=D_H^{n}(I)|_{\beta=1,\alpha=x}.$$
Then in~\eqref{DHI}, by taking $I=a,~w=1+kx,~u=x$ and $v=1+x$, we get the desired result.
\end{proof}

For example, we have $A_2^{(k)}(x)=1+kx,~A_3^{(k)}(x)=(1+kx)^2+(k+k^2)x$ and
\begin{align*}
%A_2^{(k)}(x)&=1+kx,~A_3^{(k)}(x)=(1+kx)^2+(k+k^2)x,\\
A_4^{(k)}(x)&=(1 + kx)^3 +3(k + k^2) (1 + kx)  x + k (k + k^2) x(1 + x).
\end{align*}
When $k=2$, we have
$$M_{n+1}(x)=\sum_{\sigma\in\mq_{n+1}}x^{\ap(\sigma)}
=\sum_{i\geqslant 0}(1+2x)^i\sum_{j\geqslant 0}\zeta_{n,i,j}(2x)^j(1+x)^{n-i-2j},$$
where $2^j\zeta_{n,i,j}=\gamma_{n,i,j}(2)$ and $\zeta_{n,i,j}$ satisfy the recursion
$$\zeta_{n+1,i,j}=\zeta_{n,i-1,j}+3(i+1)\zeta_{n,i+1,j-1}+2j\zeta_{n,i,j}+2(n-i-2j+2)\zeta_{n,i,j-1},$$
with $\zeta_{1,1,0}=1$ and $\zeta_{1,i,j}=0$ for all $(i,j)\neq (1,0)$.
%%%%%%%%%%%%%%%%%%%%%%%%%%%%%%%%%%%%%%%%%%%%
\section{Unexpected applications of the $\fix$ and $\cyc$ $(p,q)$-Eulerian polynomials}\label{section4}
%%%%%%%%%%%%%%%%%%%%%%%%%%%%%%%%%%%%%%%%%%%%%%%%%%%%%%%%%%%%%%%%%%%%%%%%%%%%%%%%%%%%
%\subsection{Notation and preliminaries}
%\hspace*{\parindent}
The $\fix$ and $\cyc$ {\it $(p,q)$-Eulerian polynomials} $A_n(x,y,p,q)$ are defined by
\begin{equation*}
A_n(x,y,p,q)=\sum_{\pi\in\msn}x^{\exc(\pi)}y^{\drop(\pi)}p^{\fix(\pi)}q^{\cyc(\pi)}.
\end{equation*}
This $(p,q)$-Eulerian polynomial contains a great deal of information about permutations and colored permutations, see~\cite{Ji25,Ma24,Zeng12,Yan26} for instance.
Using the {\it exponential formula}, Ksavrelof and Zeng~\cite{Zeng02} found that
\begin{equation*}\label{Anxpq-Zeng}
\sum_{n=0}^\infty A_n(x,1,p,q)\frac{z^n}{n!}=\left(\frac{(1-x)\mathrm{e}^{pz}}{\mathrm{e}^{xz}-x\mathrm{e}^{z}}\right)^q.
\end{equation*}
Below are the first few of these polynomials:
\begin{align*}
A_1(x,1,p,q)&=pq,~
A_2(x,1,p,q)=p^2q^2+qx,~
A_3(x,1,p,q)=p^3q^3+(q+3pq^2)x+qx^2,\\
A_4(x,1,p,q)&=p^4q^4+(q+4pq^2+6p^2q^3)x+(4q+3q^2+4pq^2)x^2+qx^3.
\end{align*}
Note that $\exc(\pi)+\drop(\pi)+\fix(\pi)=n$ for $\pi\in\msn$. So we have 
\begin{equation}\label{Anxpq-EGF}
\sum_{n\geqslant 0}A_n(x,y,p,q)\frac{z^n}{n!}=\sum_{n\geqslant 0}y^nA_n\left(\frac{x}{y},1,\frac{p}{y},q\right)\frac{z^n}{n!}=\left(\frac{(y-x)\mathrm{e}^{pz}}{y\mathrm{e}^{xz}-x\mathrm{e}^{yz}}\right)^q.
\end{equation}

The following result will be used repeatedly in our discussion.
\begin{lemma}[{\cite[Lemma~3.12]{Ma24}}]\label{lemmacycle}
If
\begin{equation}\label{G1}
G=\{I\rightarrow Ipq, p\rightarrow xy, x\rightarrow xy, y\rightarrow xy\},
\end{equation}
then we have
$$D_{G}^n(I)=I\sum_{\pi\in\msn}x^{\exc(\pi)}y^{\drop(\pi)}p^{\fix(\pi)}q^{\cyc(\pi)}.$$
\end{lemma}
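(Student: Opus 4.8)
The plan is to prove the identity by induction on $n$, using a grammatical labeling of permutations written in disjoint‑cycle notation, in which the inductive step corresponds to inserting the letter $n+1$ into a permutation of $[n]$.

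First I would attach to each $\pi\in\msn$ the monomial $I\,x^{\exc(\pi)}y^{\drop(\pi)}p^{\fix(\pi)}q^{\cyc(\pi)}$, so that the claim becomes $D_{G}^{n}(I)=\sum_{\pi\in\msn}I\,x^{\exc(\pi)}y^{\drop(\pi)}p^{\fix(\pi)}q^{\cyc(\pi)}$; the case $n=0$ is immediate, the empty permutation contributing $I$. For the inductive step I would enumerate the $n+1$ ways of producing a permutation $\sigma\in\msnn$ from $\pi\in\msn$: either adjoin the new $1$‑cycle $(n+1)$, or splice $n+1$ into one of the $n$ arcs $i\to\pi(i)$ of the functional digraph of $\pi$, replacing it by $i\to n+1\to\pi(i)$ (equivalently $\sigma(i)=n+1$, $\sigma(n+1)=\pi(i)$, and $\sigma$ agrees with $\pi$ elsewhere). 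Every $\sigma\in\msnn$ arises from a unique such $\pi$ by deleting $n+1$ from its cycle, so it suffices to match, for each fixed $\pi$, the $n+1$ resulting monomials with $D_{G}$ applied to the monomial of $\pi$.

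The key observation, to be checked in cases, is that since $n+1$ exceeds every letter of $[n]$, the new arc $i\to n+1$ is always an excedance and the new arc $n+1\to\pi(i)$ is always a drop, whatever the type of the original arc at $i$. Hence splicing $n+1$ into an excedance arc keeps one excedance and creates one drop ($x\rightarrow xy$); into a drop arc it trades the drop at $i$ for an excedance at $i$ together with a drop at $n+1$ ($y\rightarrow xy$); into the loop at a fixed point $j$ it destroys that fixed point and creates both an excedance and a drop while preserving the number of cycles ($p\rightarrow xy$); and the new $1$‑cycle adds one fixed point and one cycle ($I\rightarrow Ipq$). These are precisely the four rules of the grammar in~\eqref{G1}. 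Concretely this reduces to the expansion
\begin{align*}
D_{G}\!\left(I x^{a} y^{b} p^{c} q^{d}\right)
&= I x^{a} y^{b} p^{c+1} q^{d+1}
+ a\,I x^{a} y^{b+1} p^{c} q^{d}
+ b\,I x^{a+1} y^{b} p^{c} q^{d}
+ c\,I x^{a+1} y^{b+1} p^{c-1} q^{d},
\end{align*}
whose four terms carry multiplicities $1,a,b,c$ summing to $1+\exc(\pi)+\drop(\pi)+\fix(\pi)=n+1$ by the identity $\exc(\pi)+\drop(\pi)+\fix(\pi)=n$; this matches the count of insertions and closes the induction.

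The argument is largely routine; the one place deserving care, and the main obstacle, is the fixed‑point case, where one must verify that inserting $n+1$ into the loop at $j$ converts the $1$‑cycle $(j)$ into the $2$‑cycle $(n+1,j)$ \emph{without} changing the total number of cycles, so that $\cyc$ is unchanged while only $\fix$, $\exc$, $\drop$ move, exactly reflecting $p\rightarrow xy$. An alternative, less transparent route would be to combine the exponential generating function in~\eqref{Anxpq-EGF} with the standard dictionary between context‑free grammars and exponential generating functions, but I would favour the labeling argument above as more self‑contained.
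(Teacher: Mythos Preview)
Your proposal is correct and follows essentially the same approach as the paper (and the cited source~\cite{Ma24}): the paper does not re-prove Lemma~\ref{lemmacycle} but uses exactly this kind of grammatical labeling of the cycle form of a permutation in the proofs of Proposition~\ref{thmab} and Theorem~\ref{thmproper}, where labels $x,y,p,q$ are attached to excedances, drops, fixed points and cycles and the insertion of $n+1$ into each labeled position realizes the grammar rules. Your arc-splicing description of the insertion is just another way of phrasing the same labeling argument, and your case analysis (including the fixed-point case) is accurate.
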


\subsection{A remark on $(\alpha,\beta)$-Eulerian polynomials}
\hspace*{\parindent}

Let $\pi\in\msn$. Recall that $$\asc(\pi)=\#\{i\in[n]: ~\pi(i-1)<\pi(i)~\&~\pi(0)=0\},$$
$$\des(\pi)=\#\{i\in[n]: ~\pi(i)>\pi(i+1)~\&~\pi(n+1)=0\}.$$
We define 
 $$\asc^*(\pi)=\#\{i\in [n-1]: ~\pi(i)<\pi(i+1)\},$$
 $$\des^*(\pi)=\#\{i\in[n-1]: ~\pi(i)>\pi(i+1)\}.$$
Clearly, $\asc(\pi)=\asc^*(\pi)+1$ and $\des(\pi)=\des^*(\pi)+1$.
A {\it left-to-right maximum} of $\pi$ is an entry $\pi(i)$ such that $\pi(i)>\pi(j)$ for every $j<i$ and 
a {\it right-to-left maximum} of $\pi$ is an index $i$ such that $\pi(i)>\pi(j)$ for every $j>i$.
Let $\operatorname{lrmax}(\pi)$ and $\operatorname{rlmax}(\pi)$ denote the number of left-to-right maxima and right-to-left maxima of $\pi$, respectively.
The {\it $(\alpha,\beta)$-Eulerian polynomials} are defined by
$$A_n(x,y|\alpha,\beta)=\sum_{\pi\in\ms_{n+1}}x^{\asc^*(\pi)}y^{\des^*(\pi)}\alpha^{\operatorname{lrmax}(\pi)-1}\beta^{\operatorname{rlmax}(\pi)-1}.$$
In particular, $A_1(x,y|\alpha,\beta)=\alpha x+\beta y$ and $A_2(x,y|\alpha,\beta)=(\alpha x+\beta y)^2+(\alpha+\beta)xy$.
Carlitz-Scoville~\cite{Carlitz74} introduced the $(\alpha,\beta)$-Eulerian polynomials and obtained that 
\begin{equation}\label{Carlitz}
\sum_{n\geqslant 0}A_n(x,y|\alpha,\beta)\frac{z^n}{n!}=\left(1+xF(x,y;z)\right)^\alpha\left(1+yF(x,y;z)\right)^\beta,
\end{equation}
where by convention $A_0(x,y|\alpha,\beta)=1$ and $$F(x,y;z)=\frac{\mathrm{e}^{xz}-\mathrm{e}^{yz}}{x\mathrm{e}^{yz}-y\mathrm{e}^{xz}}.$$
By taking the complements of permutations, Ji~\cite{Ji25} found that 
$$A_n(x,y|\alpha,\beta)=\sum_{\pi\in\ms_{n+1}}x^{\des^*(\pi)}y^{\asc^*(\pi)}\alpha^{\operatorname{lrmin}(\pi)-1}\beta^{\operatorname{rlmin}(\pi)-1}.$$
A grammatical interpretation of the $(\alpha,\beta)$-Eulerian polynomials is given as follows.
\begin{lemma}[{\cite[Theorem~3.1]{Ji25}}]\label{lemmaJi}
If $H=\{a\rightarrow a\alpha x, b\rightarrow b\beta y, x\rightarrow xy, y\rightarrow xy\}$,
then we have $D_{H}^n(ab)=abA_n(x,y|\alpha,\beta)$.
\end{lemma}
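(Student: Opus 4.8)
The plan is to show that both sides of the asserted identity satisfy the \emph{same} recursion in $n$ with the same value at $n=0$, and then to conclude by induction. Put $P_n=P_n(x,y):=D_{H}^{n}(ab)/(ab)$; this is a genuine polynomial in $x,y$ over $\mathbb{Z}[\alpha,\beta]$, because the rules $a\to a\alpha x$ and $b\to b\beta y$ keep the exponents of $a$ and $b$ equal to $1$ and $D_H(\alpha)=D_H(\beta)=0$, so that $D_H^n(ab)$ is always $ab$ times such a polynomial. By the Leibniz rule $D_{H}^{n+1}(ab)=D_H(ab)P_n+ab\,D_H(P_n)$, where $D_H(ab)=ab(\alpha x+\beta y)$; and since $D_H(x)=D_H(y)=xy$, the chain rule gives $D_H(P_n)=xy(\partial_x+\partial_y)P_n$. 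Hence
\[
P_{n+1}=(\alpha x+\beta y)P_n+xy(\partial_x+\partial_y)P_n,\qquad P_0=1 .
\]
It therefore suffices to prove that $A_n(x,y|\alpha,\beta)$ obeys the same recursion; the initial value $A_0(x,y|\alpha,\beta)=1$ holds by convention.

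For this I would work with the exponential generating function \eqref{Carlitz}. Write $\Phi=\Phi(x,y,z)=\sum_{n\ge 0}A_n(x,y|\alpha,\beta)z^n/n!=(1+xF)^{\alpha}(1+yF)^{\beta}$ with $F=F(x,y;z)$. Comparing coefficients of $z^n/n!$, the recursion for $A_n$ is equivalent to the partial differential equation
\[
\partial_z\Phi=(\alpha x+\beta y)\Phi+xy(\partial_x+\partial_y)\Phi ,
\]
and this in turn reduces to two short identities for $F$: $\partial_z F=(1+xF)(1+yF)$ and $(\partial_x+\partial_y)F=F^{2}$. The first is a one-line verification from $F=(\mathrm{e}^{xz}-\mathrm{e}^{yz})/(x\mathrm{e}^{yz}-y\mathrm{e}^{xz})$: writing $F=N/D$ one checks $D'=-xyN$ and $N'=D+(x+y)N$, whence $F'=N'/D+xyF^{2}=(1+(x+y)F)+xyF^{2}=(1+xF)(1+yF)$. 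For the second, replacing $(x,y)$ by $(x+s,y+s)$ factors $\mathrm{e}^{sz}$ out of both $N$ and $D$ and yields $F(x+s,y+s;z)=F/(1-sF)$, so $(\partial_x+\partial_y)F$ is the $s$-derivative at $s=0$ of $F/(1-sF)$, namely $F^{2}$. Granting these, logarithmic differentiation of $\Phi$ gives $\partial_z\log\Phi=\alpha x(1+yF)+\beta y(1+xF)=(\alpha x+\beta y)+(\alpha+\beta)xyF$, while $(\partial_x+\partial_y)(xF)=F(1+xF)$ and $(\partial_x+\partial_y)(yF)=F(1+yF)$ give $(\partial_x+\partial_y)\log\Phi=(\alpha+\beta)F$; the displayed PDE follows immediately. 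Since $P_n$ and $A_n(x,y|\alpha,\beta)$ satisfy the same recursion and agree at $n=0$, they are equal for all $n$, which is the lemma.

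A variant that sidesteps the explicit generating function is to solve the autonomous system attached to $H$ directly: with $\dot a=\alpha ax$, $\dot b=\beta by$, $\dot x=\dot y=xy$ and the letters themselves as initial data, $x-y$ is a first integral, so the equation for $x$ is Riccati and integrates in closed form; substituting into the linear equations for $a$ and $b$ gives $a(t)b(t)=ab\,\mathrm{e}^{\alpha(x-y)t}\bigl((x-y)/(x-y\mathrm{e}^{(x-y)t})\bigr)^{\alpha+\beta}$, which one recognizes as $ab$ times the right side of \eqref{Carlitz} after simplifying $(1+xF)^{\alpha}(1+yF)^{\beta}$; since, by the standard flow interpretation of a context-free grammar, $\sum_n D_{H}^{n}(ab)t^n/n!=a(t)b(t)$, the lemma follows again. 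Finally, the most combinatorial approach — presumably the one in \cite{Ji25} — would construct a grammatical labeling of the permutations in $\ms_{n+1}$ recording $\asc^*$, $\des^*$, $\operatorname{lrmax}$ and $\operatorname{rlmax}$, with $D_H$ realized as the insertion of the new largest letter. The hard part there is that inserting the maximum alters $\operatorname{lrmax}$ and $\operatorname{rlmax}$ non-locally — a single insertion can destroy several left-to-right maxima at once — so a naive labeling of the $n+2$ gaps by $a,b,x,y$ according to ascent/descent type is not enough, and one needs a finer, statistic-aware labeling (compare the increasing-binary-tree labeling of Chen--Fu \cite{Chen25}).
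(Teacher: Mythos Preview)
Your argument is correct. The recursion $P_{n+1}=(\alpha x+\beta y)P_n+xy(\partial_x+\partial_y)P_n$ for $P_n:=D_H^n(ab)/(ab)$ is derived cleanly, and your verification that the Carlitz--Scoville generating function \eqref{Carlitz} satisfies the matching PDE is accurate in every detail (the identities $\partial_zF=(1+xF)(1+yF)$ and $(\partial_x+\partial_y)F=F^2$ both check out, and your shift trick $F(x+s,y+s;z)=F/(1-sF)$ is a nice way to get the second). The alternative via integrating the flow is also correct: one indeed finds $(1+xF)^\alpha(1+yF)^\beta=\mathrm{e}^{\alpha(x-y)z}\bigl((x-y)/(x-y\mathrm{e}^{(x-y)z})\bigr)^{\alpha+\beta}$, which matches your closed form for $a(t)b(t)/(ab)$.

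Note, however, that the present paper does \emph{not} prove this lemma at all; it is quoted verbatim from Ji~\cite[Theorem~3.1]{Ji25} and used as input to Proposition~\ref{thmab}. So there is no proof here to compare against. As you surmise, Ji's original argument is combinatorial: it constructs a grammatical labeling of $\pi\in\ms_{n+1}$ so that inserting the new maximum $n+2$ realises $D_H$, with the letters $a,b$ tracking the runs of left-to-right and right-to-left maxima respectively (this is exactly why the rules $a\to a\alpha x$ and $b\to b\beta y$ are separate). Your worry that ``a single insertion can destroy several left-to-right maxima at once'' is well-founded for a naive gap-labeling, but Ji's labeling is arranged so that only the label at the insertion point changes --- the point is that the positions carrying the $a$- and $b$-type labels are precisely the extremal ones, and inserting the new maximum there converts that single label while creating one new ascent or descent. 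Your analytic route trades this bijective insight for a short computation with $F$; either approach is adequate for the lemma, and yours has the virtue of being self-contained given \eqref{Carlitz}.
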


Recently, various bijections between permutations and increasing trees are repeatedly discovered, 
see~\cite{Chen23,Chen25,Yan26} for instances. When $\alpha=\beta$, the $(\alpha,\beta)$-Eulerian polynomials are reduced to the $\alpha$-Eulerian polynomials.
Using a labeling scheme for increasing binary trees, 
Chen-Fu~\cite{Chen25} obtained a combinatorial interpretation of the $\gamma$-coefficients of the $\alpha$-Eulerian polynomials
in terms of forests of planted 0-1-2-plane trees. 
We can now present the following result, which has also been discussed by Xu-Zeng~\cite[Eq.~(1.9)]{Xu25}.  
\begin{proposition}\label{thmab}
The $(\alpha,\beta)$-Eulerian polynomials can be obtained from the $\fix$ and $\cyc$
$(p,q)$-Eulerian polynomials by special parametrizations:
$$A_n(x,y|\alpha,\beta)=A_n\left(x,y,\frac{\alpha x+\beta y}{\alpha+\beta},\alpha+\beta\right).$$
\end{proposition}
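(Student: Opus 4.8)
The plan is to prove the identity by comparing the context-free grammars that generate the two sides, using the change-of-grammars technique. By Lemma~\ref{lemmaJi}, the $(\alpha,\beta)$-Eulerian polynomial $A_n(x,y|\alpha,\beta)$ is produced by $H=\{a\rightarrow a\alpha x,\ b\rightarrow b\beta y,\ x\rightarrow xy,\ y\rightarrow xy\}$ via $D_H^n(ab)=abA_n(x,y|\alpha,\beta)$. By Lemma~\ref{lemmacycle}, the $(p,q)$-Eulerian polynomial is produced by $G=\{I\rightarrow Ipq,\ p\rightarrow xy,\ x\rightarrow xy,\ y\rightarrow xy\}$ via $D_G^n(I)=I A_n(x,y,p,q)$. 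The goal is therefore to exhibit a substitution of the variables of $G$ (in terms of the variables of $H$) under which $D_G$ is intertwined with $D_H$, and under which the specialization $p=(\alpha x+\beta y)/(\alpha+\beta)$, $q=\alpha+\beta$ is realized.

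First I would make the natural guess dictated by the target parametrization: set $q=\alpha+\beta$ (a constant, so $D_H(q)=0$, consistent with $D_G(q)=0$), and $p=(\alpha x+\beta y)/(\alpha+\beta)$, so that $pq=\alpha x+\beta y$. Then I would compute $D_H$ of this candidate for $pq$: since $D_H(x)=D_H(y)=xy$, we get $D_H(pq)=\alpha xy+\beta xy=(\alpha+\beta)xy=qxy$, i.e.\ $D_H(p)=xy$, which matches the rule $p\rightarrow xy$ of $G$ exactly. The rules $x\rightarrow xy$, $y\rightarrow xy$ are common to both grammars. So the only remaining matter is the seed: under $G$ the seed is $I$ with $D_G(I)=Ipq$; I would check that $I:=ab$ satisfies $D_H(ab)=a(\alpha x)b+ab(\beta y)=ab(\alpha x+\beta y)=ab\cdot pq$ under the substitution, which is precisely the rule $I\rightarrow Ipq$. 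Hence $D_H^n(ab)$ computed in the $H$-world equals $D_G^n(I)$ computed in the $G$-world, after the substitution $I=ab$, $q=\alpha+\beta$, $p=(\alpha x+\beta y)/(\alpha+\beta)$, $x=x$, $y=y$. Reading off both sides via the two lemmas then yields
\[
abA_n(x,y|\alpha,\beta)=D_H^n(ab)=I A_n\!\left(x,y,\frac{\alpha x+\beta y}{\alpha+\beta},\alpha+\beta\right)=abA_n\!\left(x,y,\frac{\alpha x+\beta y}{\alpha+\beta},\alpha+\beta\right),
\]
and dividing by $ab$ gives the claim.

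The argument is short, so the real content is in making the "intertwining" rigorous rather than heuristic: one must argue that because every generator of $G$, after the substitution, has its $D_G$-image equal to its $D_H$-image, an easy induction on $n$ gives $D_G^n(w)=D_H^n(\tilde w)$ for the substituted seed $w=I\mapsto\tilde w=ab$. The one point needing a little care — and the only place I expect any friction — is that the substitution sends the \emph{independent} variables $\{I,p,x,y\}$ of $G$ to \emph{algebraically dependent} expressions in $\{a,b,x,y\}$ (namely $I=ab$ and $p$ a rational function of $x,y$), so I should phrase the induction as: for any polynomial (or rational) identity among grammar words, applying $D_H$ to the substituted version agrees with substituting into the $D_G$-derivative, which follows from the Leibniz rule together with the generator-by-generator check above. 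Once that is said cleanly, the proof is complete; no combinatorial bijection is needed. An alternative, entirely equivalent route would be to verify the identity at the level of exponential generating functions, comparing~\eqref{Carlitz} with~\eqref{Anxpq-EGF} after the substitution $p\mapsto(\alpha x+\beta y)/(\alpha+\beta)$, $q\mapsto\alpha+\beta$; this is a quick check that $\bigl((y-x)\mathrm{e}^{pz}/(y\mathrm{e}^{xz}-x\mathrm{e}^{yz})\bigr)^q$ becomes $(1+xF)^\alpha(1+yF)^\beta$, but I prefer the grammar proof since it fits the paper's method and avoids manipulating the fraction $F(x,y;z)$.
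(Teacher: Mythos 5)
Your proposal is correct and follows essentially the same route as the paper: both proofs rest on the key computations $D_H(ab)=ab(\alpha x+\beta y)$ and $D_H(\alpha x+\beta y)=(\alpha+\beta)xy$ for Ji's grammar $H$, and then transfer the combinatorial interpretation of Lemma~\ref{lemmacycle} through this change of grammar. The only organizational difference is that the paper introduces an intermediate grammar $F$ with $J=\alpha x+\beta y$ as a new letter and re-verifies the cycle-labeling for it, whereas you substitute $p=(\alpha x+\beta y)/(\alpha+\beta)$, $q=\alpha+\beta$ directly into Lemma~\ref{lemmacycle} and justify the intertwining generator by generator --- a harmless and, if anything, slightly leaner packaging of the same argument.
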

\begin{proof}
For the grammar $H=\{a\rightarrow a\alpha x, b\rightarrow b\beta y, x\rightarrow xy, y\rightarrow xy\}$, we see that 
$$D_H(ab)=ab(\alpha x+\beta y),~D_H(\alpha x+\beta y)=(\alpha+\beta)xy.$$
Setting $I=ab$ and $J=\alpha x+\beta y$, then we get the following grammar:
$$F=\{I\rightarrow IJ,~J\rightarrow (\alpha+\beta)xy,~ x\rightarrow xy,~y\rightarrow xy\}.$$
As a variant of Lemma~\ref{lemmacycle}, we claim that 
\begin{equation}\label{DFI}
D_F^n(I)=I\sum_{\pi\in\msn}x^{\exc(\pi)}y^{\drop(\pi)}J^{\fix(\pi)}(\alpha+\beta)^{\cyc(\pi)-\fix(\pi)},
\end{equation}
which can be easily verified by using the following grammatical labeling of $\pi$:
\begin{itemize}
  \item [\rm ($L_1$)]If $\pi(i)$ is an excedance, then put a superscript label $x$ just before $\pi(i)$;
  \item [\rm ($L_2$)]If $\pi(i)$ is a drop, then put a superscript label $y$ just right after $i$;
 \item [\rm ($L_3$)]If $\pi(i)$ is a fixed point, then put a superscript label $J$ right after $i$;
 \item [\rm ($L_4$)]Put a superscript label $I$ right after $\pi$ and put a subscript label $\alpha+\beta$ right after each cycle with at least two elements.
\end{itemize}

When $\pi\in\msn$ is represented as the composition of disjoint cycles, we always write $\pi$
by using its \emph{standard cycle decomposition}, in which each
cycle is written with its smallest entry first and the cycles are written in ascending order of their
first entry.
For example, below shows the grammatical labeling of $\pi=(1,3,5)(2,6,4)(7)$:
$$(1^x3^x5^y)_{\alpha+\beta}(2^x6^y4^y)_{\alpha+\beta}(7^J)^I.$$

By taking $I=ab$ and $J=\alpha x+\beta y$ in~\eqref{DFI}, it follows from Lemma~\ref{lemmaJi} that
$$A_n(x,y|\alpha,\beta)=\sum_{\pi\in\msn}x^{\exc(\pi)}y^{\drop(\pi)}(\alpha x+\beta y)^{\fix(\pi)}(\alpha+\beta)^{\cyc(\pi)-\fix(\pi)},$$
as desired. This completes the proof.
\end{proof}

Combining~\eqref{Anxpq-EGF} and Theorem~\ref{thmab}, we immediately get a simple variant of~\eqref{Carlitz}.
\begin{corollary}
We have 
$$\sum_{n\geqslant 0}A_n(x,y|\alpha,\beta)\frac{z^n}{n!}
=\left(\frac{(y-x)\mathrm{e}^{\frac{\alpha x+\beta y}{\alpha+\beta}z}}{y\mathrm{e}^{xz}-x\mathrm{e}^{yz}}\right)^{\alpha+\beta}.$$
\end{corollary}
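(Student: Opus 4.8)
The plan is to specialize the closed-form exponential generating function~\eqref{Anxpq-EGF} for the $\fix$ and $\cyc$ $(p,q)$-Eulerian polynomials according to the parametrization furnished by Proposition~\ref{thmab}. First I would take the identity of Proposition~\ref{thmab}, namely $A_n(x,y|\alpha,\beta)=A_n\bigl(x,y,\tfrac{\alpha x+\beta y}{\alpha+\beta},\alpha+\beta\bigr)$, multiply both sides by $z^n/n!$, and sum over $n\geqslant 0$. This turns the left-hand side into $\sum_{n\geqslant 0}A_n(x,y|\alpha,\beta)z^n/n!$ and the right-hand side into the generating function $\sum_{n\geqslant 0}A_n(x,y,p,q)z^n/n!$ evaluated at $p=\tfrac{\alpha x+\beta y}{\alpha+\beta}$ and $q=\alpha+\beta$. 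Then I would substitute exactly these values of $p$ and $q$ into the right-hand side of~\eqref{Anxpq-EGF}---where $p$ enters only through the factor $\mathrm{e}^{pz}$ and $q$ only as the outer exponent---to arrive at the asserted formula.

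The only point worth checking is that the two series agree term by term at $n=0$, so that the closed forms really coincide and not merely their $n\geqslant 1$ parts: by convention $A_0(x,y|\alpha,\beta)=1$, on the $(p,q)$-side the empty permutation contributes $A_0(x,y,p,q)=1$ for all $p,q$, and the substituted right-hand side of~\eqref{Anxpq-EGF} evaluates to $1$ at $z=0$. Consequently there is no real obstacle here: the statement is an immediate corollary of~\eqref{Anxpq-EGF} together with Proposition~\ref{thmab}, and the displayed equality holds as an identity of formal power series in $z$ with coefficients in $\mathbb{Q}(x,y,\alpha,\beta)$, once the fractional exponent is interpreted in the usual way via the base having constant term $1$.

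As an independent consistency check one may recover the same expression directly from the Carlitz--Scoville form~\eqref{Carlitz}: a short computation gives $1+xF(x,y;z)=\frac{(x-y)\mathrm{e}^{xz}}{x\mathrm{e}^{yz}-y\mathrm{e}^{xz}}$ and $1+yF(x,y;z)=\frac{(x-y)\mathrm{e}^{yz}}{x\mathrm{e}^{yz}-y\mathrm{e}^{xz}}$, so that $\bigl(1+xF\bigr)^\alpha\bigl(1+yF\bigr)^\beta=\frac{(x-y)^{\alpha+\beta}\mathrm{e}^{(\alpha x+\beta y)z}}{(x\mathrm{e}^{yz}-y\mathrm{e}^{xz})^{\alpha+\beta}}$, which matches the right-hand side of the Corollary after the sign cancellations $y\mathrm{e}^{xz}-x\mathrm{e}^{yz}=-(x\mathrm{e}^{yz}-y\mathrm{e}^{xz})$ and $y-x=-(x-y)$. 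This route bypasses Proposition~\ref{thmab} entirely and is offered only as a sanity check; it is not needed for the proof.
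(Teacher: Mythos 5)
Your proposal is correct and is essentially the paper's own proof: the authors obtain the corollary by "combining \eqref{Anxpq-EGF} and Proposition~\ref{thmab}," i.e.\ exactly the substitution $p=\frac{\alpha x+\beta y}{\alpha+\beta}$, $q=\alpha+\beta$ that you carry out, with your remarks on the $n=0$ term and the interpretation of the fractional power being harmless additional care. Your independent verification against the Carlitz--Scoville formula \eqref{Carlitz} is also computed correctly, but as you note it is not needed.
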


Given a permutation $\pi\in\msn$ and set $\pi(0)=\pi(n+1)=0$. We say that $\pi(i)$ is
\begin{itemize}
  \item a {\it peak} if $\pi({i-1})<\pi(i)>\pi({i+1})$;
  \item a {\it valley} if $\pi({i-1})>\pi(i)<\pi({i+1})$;
  \item a {\it double ascent} if $\pi({i-1})<\pi(i)<\pi({i+1})$;
  \item a {\it double descent} if $\pi({i-1})>\pi(i)>\pi({i+1})$.
\end{itemize} 
Let $\operatorname{pk}(\pi)$ (resp.~$\operatorname{val}(\pi)$,~$\operatorname{dasc}(\pi)$,~$\operatorname{ddes}(\pi)$) be the number of peaks (resp.~valleys, double ascents, double descents) of $\pi$.
It should be noted that $\operatorname{pk}(\pi)=\operatorname{val}(\pi)+1$, $\asc(\pi)=\operatorname{dasc}(\pi)+\operatorname{pk}(\pi)$ and 
$\des(\pi)=\operatorname{ddes}(\pi)+\operatorname{pk}(\pi)$.
Ji~\cite{Ji25} provided an exponential formula for the following six-variable polynomials
$$A_n(u_1,u_2,u_3,u_4|\alpha,\beta)=\sum_{\pi\in\ms_{n+1}}(u_1u_2)^{\operatorname{val}(\pi)}
u_3^{\operatorname{dasc}(\pi)}u_4^{\operatorname{ddes}(\pi)}\alpha^{\operatorname{lrmax}(\pi)-1}\beta^{\operatorname{rlmax}(\pi)-1}.$$
In~\cite[Eq.~(1.10)]{Xu25}, Xu-Zeng found that 
\begin{equation}\label{Xu01}
A_n(u_1,u_2,u_3,u_4|\alpha,\beta)=A_n\left(x,y,\frac{p u_3+q u_4}{p+q},p+q\right),
\end{equation}
where $xy=u_1u_2$ and $x+y=u_3+u_4$. In Theorem~\ref{thm24}, we shall give a different formula. 
%We say that an index $i$ is
%\begin{itemize}
%  \item a {\it cycle peak} if $\pi^{-1}(i)<i>\pi({i})$;
%  \item a {\it cycle valley} if $\pi^{-1}(i)>i<\pi({i})$;
%  \item a {\it cycle double ascent} if $\pi^{-1}(i)<i<\pi({i})$;
%  \item a {\it cycle double descent} if $\pi^{-1}(i)>i>\pi({i})$.
%\end{itemize} 
%Let $\operatorname{cpk}(\pi)$ (resp.~$\operatorname{cval}(\pi)$,~$\operatorname{cda}(\pi)$,~$\operatorname{cdd}(\pi)$) be the number of cycle peaks (resp.~ cycle valleys, cycle double ascents, cycle double descents) of $\pi$.
%One main result of Xu-Zeng~\cite{Xu25} is given as follows:
%In~\cite{Xu25}, Xu-Zeng found that 
%\begin{equation}\label{Xu02}
%\sum_{\pi\in\msn}x^{\exc(\pi)}y^{\drop(\pi)}p^{\fix(\pi)}q^{\cyc(\pi)}=\sum_{\pi\in\ms_{n}}(u_1u_2)^{\operatorname{cpk}(\pi)}
%u_3^{\operatorname{cda}(\pi)}u_4^{\operatorname{cdd}(\pi)}p^{\fix(\pi)}q^{\cyc(\pi)},
%\end{equation}
%where $xy=u_1u_2$ and $x+y=u_3+u_4$.
%%%%%%%%%%%%%%%%%%%%%%%%%%%%%%%%%%%%%%%%%%%%%%%%%%%%%%%%%%%%%%%%%%%%%%%%%%%%%%%%%
%%%%%%%%%%%%%%%%%%%%%%%%%%%%%%%%%%%%%%%%%%%%%%%%%
%%%%%%%%%%%%%%%%%%%%%%%%%%%%%%%%%%%%%%%%%%%
\subsection{Proper and improper left ascent-plateaux of Stirling permutations}
\hspace*{\parindent}
%%%%%%%%%%%%%%%%%%%%%%%%%%%%%%%%%%%%%%%%%%%%%%%%%%%%%%%%%%%%%%%%%%%%%%%%%%%%%%%%%%
%%%%%%%%%%%%%%%%%%%%%%%%%%%%%%%%%%%%%%%%%%%%%%%%%%
%%%%%%%%%%%%%%%%%%%%%%%%%%%%%%%%%%%%%%%%%%%%

\begin{definition}
Given $\sigma\in\mqn$. For $1\leqslant i\leqslant 2n-1$, we say that an entry $\sigma_i$ is a {\it proper left ascent-plateau} if it
satisfies two conditions: $(i)$ $\sigma_i$ is a left ascent-plateau, i.e., $\sigma_{i-1}<\sigma_i=\sigma_{i+1}$; $(ii)$ if $\sigma_j<\sigma_i$, then $j<i$, or $i=1$ and $\sigma_1=\sigma_2=1$.
In other words, all entries less than $\sigma_i$ are on the left of $\sigma_i$.
\end{definition}
Let $\operatorname{plap}(\sigma)$ be the number of {\it proper left ascent-plateaux} of $\sigma$, and the number of {\it improper left ascent-plateaux} is given by
$\operatorname{implap}(\sigma)=\lap(\sigma)-\operatorname{plap}(\sigma)$.
\begin{example}
In $\mq_2$, we have $$\operatorname{plap}(\textbf{1}1\textbf{2}2)=2,~\operatorname{plap}(1221)=0,~\operatorname{plap}(2211)=0,$$
 $$\operatorname{implap}(1122)=0,~\operatorname{implap}(1\textbf{2}21)=1,~\operatorname{implap}(\textbf{2}211)=1.$$
\end{example}

\begin{example}
If $\sigma=11245547723366\in\mq_7$, then $1$ and $3$ are both proper left ascent-plateaux, while $5,6$ and $7$ are all improper left ascent-plateaux.
\end{example}

\begin{lemma}\label{lemmaap}
Let $G=\{I\rightarrow Iy,~y\rightarrow 2xz,~x\rightarrow 2xz,~z\rightarrow 2xz\}$. Then we have 
$$D_G^n(I)=I\sum_{\sigma\in\mqn}x^{\operatorname{implap}(\sigma)}y^{\operatorname{plap}(\sigma)}z^{n-\operatorname{lap}(\sigma)}.$$
\end{lemma}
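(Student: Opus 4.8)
The plan is to establish a grammatical labeling of each Stirling permutation $\sigma \in \mqn$ that produces exactly the monomial $x^{\operatorname{implap}(\sigma)}y^{\operatorname{plap}(\sigma)}z^{n-\operatorname{lap}(\sigma)}$, and then to verify that applying $D_G$ corresponds to the standard insertion procedure that builds $\mq_{n+1}$ from $\mqn$. Recall the classical recursive construction: every $\sigma \in \mq_{n+1}$ is obtained from a unique $\tau \in \mqn$ by inserting the pair $(n+1)(n+1)$ into one of the $2n+1$ ``slots'' of $\tau$ (the positions before $\tau_1$, between consecutive entries, and after $\tau_{2n}$), and conversely deleting the two copies of the largest letter from $\sigma$ recovers $\tau$. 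So I would attach to each $\sigma$ a labeling of its $2n+1$ slots, assigning to each slot one of the letters $\{x, y, z\}$ according to the \emph{type} of ascent-plateau that would be created there, times an overall factor $I$ recording that we are looking at a genuine Stirling permutation.

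First I would pin down the slot labels precisely. Insert $(n+1)(n+1)$ into a slot of $\tau$; since $n+1$ is the largest letter, the left copy always satisfies $\sigma_{i-1} < \sigma_i = \sigma_{i+1}$, hence is always a (longest) left ascent-plateau of $\sigma$ — this is why the exponent of $z$ is forced to be $n - \operatorname{lap}(\sigma)$ and why every insertion raises $\operatorname{lap}$ by one. The newly created left ascent-plateau is \emph{proper} precisely when every smaller letter lies to its left, i.e.\ when the slot lies weakly to the right of all of $\tau$; there is exactly one such slot (the rightmost one, after $\tau_{2n}$), and it should carry label $y$. Every other slot creates an \emph{improper} left ascent-plateau and carries label $x$; additionally, inserting into a slot may destroy or preserve the ascent-plateau status of the entry to the left. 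Here is the subtle bookkeeping: inserting $(n+1)(n+1)$ immediately after a position that was itself a left ascent-plateau $\tau_j = \tau_{j+1}$ changes the local pattern $\cdots \tau_j \tau_{j+1}\cdots$ to $\cdots \tau_j \tau_{j+1} (n+1)(n+1)\cdots$ — the old plateau $\tau_j$ is unaffected — whereas inserting \emph{between} $\tau_j$ and $\tau_{j+1}$ when $\tau_j = \tau_{j+1}$ is impossible, since a Stirling permutation forbids a smaller letter (here, nothing smaller, but the two equal copies must be adjacent or separated only by larger letters — in fact $(n+1) > \tau_j$ is allowed). I would therefore split the $2n+1$ slots into the types that leave $\operatorname{plap}$, $\operatorname{implap}$ fixed versus those that convert. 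The upshot I want is: each of the three production rules $I \to Iy$, $y \to 2xz$, $x \to 2xz$, $z \to 2xz$ corresponds to choosing one slot, and the factor $2$ in $y \to 2xz$, $x \to 2xz$, $z \to 2xz$ records that a single existing ascent-plateau entry of $\tau$ labeled $y$ (resp. $x$, resp. $z$) spawns two slots around it, one yielding an improper plateau (label $x$) and — no: more carefully, inserting around an existing plateau entry produces one slot on each side, and exactly one of the two newly created configurations is a left ascent-plateau while the entry's own status and the $z$-count shift; I will chase this to land on $\{x \to 2xz, y \to 2xz, z \to 2xz\}$.

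The cleanest route is to verify the base case $D_G^1(I) = Iy$ against $\mq_1 = \{11\}$, where $\operatorname{plap}(11) = 1$, $\operatorname{implap}(11) = 0$, $\operatorname{lap}(11) = 1$, so the monomial is $y$ — matching. Then I would argue by induction: assuming $D_G^n(I) = I\sum_{\tau \in \mqn} x^{\operatorname{implap}(\tau)} y^{\operatorname{plap}(\tau)} z^{n - \operatorname{lap}(\tau)}$, apply $D_G$ and use the derivation rules. Each occurrence of $I$, $y$, $x$, $z$ in the monomial of $\tau$ gets hit by $D_G$; the contribution $I \to Iy$ from the single $I$ accounts for the unique slot (rightmost) giving a new proper plateau; each of the $\operatorname{plap}(\tau) + \operatorname{implap}(\tau) + (n - \operatorname{lap}(\tau))$ letters among $\{y, x, z\}$ gets replaced by $2xz$, and I must check this enumerates the remaining $2n$ slots correctly: there are $2n$ of them, and each existing ascent-plateau entry of $\tau$ (of which there are $\operatorname{lap}(\tau)$, contributing the $y$'s and $x$'s) together with each ``non-left-ascent-plateau'' position (contributing the $z$'s, of which there are $n - \operatorname{lap}(\tau)$) — wait, that totals $\operatorname{lap}(\tau) + (n - \operatorname{lap}(\tau)) = n$, and each spawns $2$ slots, giving $2n$; good. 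So each such letter $\to 2xz$ records: the insertion creates one new improper left ascent-plateau (factor $x$), the $\operatorname{lap}$-count and the ``remaining'' count each pick up one (factor $z$ from $n \mapsto n+1$ with $\operatorname{lap}$ also up by one, so $n+1 - \operatorname{lap}(\sigma) = n - \operatorname{lap}(\tau) + $ [sometimes $0$, sometimes $1$]), and the old letter is consumed and reborn with the right status.

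The main obstacle will be the precise verification that inserting $(n+1)(n+1)$ into each of the two slots flanking an entry $\tau_j$ produces the letter-pair $\{x, z\}$ and not, say, $\{x, x\}$ or $\{y, z\}$ — in particular confirming that no insertion other than the rightmost slot ever creates a \emph{proper} left ascent-plateau, and that the status (proper vs. improper vs. not-a-plateau) of the pre-existing entry $\tau_j$ is always destroyed and replaced correctly so that the net effect on $(\operatorname{implap}, \operatorname{plap}, \operatorname{lap})$ is exactly what $2xz$ encodes. This is a finite local case analysis on the pattern of $\tau$ near the insertion slot (four cases: $\tau_j$ a peak/plateau-left/plateau-right/ordinary ascent or descent), and once it is done the grammatical identity follows immediately from the Leibniz rule and induction; I expect no difficulty beyond this careful casework.
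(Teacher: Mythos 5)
Your approach is exactly the paper's: label the $2n+1$ insertion slots of $\tau\in\mqn$, identify the terminal slot (carrying $I$) as the unique one whose insertion creates a \emph{proper} left ascent-plateau, group the other $2n$ slots into $n$ pairs each carrying one letter from $\{x,y,z\}$, and induct on inserting $(n+1)(n+1)$. The skeleton, the base cases, and the slot count are all right. But two things must be fixed before this is a proof rather than a plan. First, the assertion that ``every insertion raises $\operatorname{lap}$ by one'' is false (you later half-retract it with ``sometimes $0$, sometimes $1$''): inserting $(n+1)(n+1)$ into either of the two gaps flanking an existing left ascent-plateau $\tau_i$ --- the gap before $\tau_i$ or the gap between $\tau_i$ and its twin --- destroys that plateau while creating a new improper one, so $\operatorname{lap}$ is unchanged and $n-\operatorname{lap}$ goes up by one; e.g.\ $11\mapsto 2211$ keeps $\operatorname{lap}=1$. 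This is precisely what the rules $x\to 2xz$ and $y\to 2xz$ encode, so getting it wrong would derail the induction.

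Second, you never actually define the static labeling: the $2\bigl(n-\operatorname{lap}(\tau)\bigr)$ gaps that are neither terminal nor adjacent to a left ascent-plateau must be grouped into $n-\operatorname{lap}(\tau)$ pairs, each pair carrying a single $z$ (the paper pairs them ``pairwise nearest from left to right,'' though any fixed pairing works, since the weight change depends only on a gap's type, not on which member of a pair is hit). With that in place, the ``main obstacle'' you defer is a three-line case analysis: terminal slot $\Rightarrow$ new proper plateau, nothing destroyed ($I\to Iy$); plateau-adjacent slot $\Rightarrow$ old plateau killed, new improper plateau, its two leftover gaps forming a fresh $z$-pair (letter $\to xz$, two choices of slot, hence the factor $2$); any other slot $\Rightarrow$ new improper plateau, no destruction, existing proper plateaux stay proper because $n+1$ exceeds everything ($z\to 2xz$). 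That casework \emph{is} the lemma; once written down, your induction via the Leibniz rule closes the argument exactly as in the paper.
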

\begin{proof}
Given $\sigma\in\mqn$. 
First put a label $I$ at the end of $\sigma$, and we label $\sigma_i$ as follows:
\begin{itemize}
  \item [$(i)$] If $\sigma_i$ is a proper left ascent-plateau, then we label the two positions just before and right after $\sigma_i$ by a superscript label $y$, i.e., $\sigma_{i-1}\overbrace{\sigma_{i}}^y\sigma_{i+1}$;
  \item [$(ii)$] If $\sigma_i$ is an improper left ascent-plateau, then we label the two positions just before and right after $\sigma_i$ by a superscript label $x$, i.e., $\sigma_{i-1}\overbrace{\sigma_{i}}^x\sigma_{i+1}$;
  \item [$(iii)$] Except the above cases, there still have an even number of positions (maybe empty) and so we attach a subscript label $z$ to record pairwise nearest elements from left to right.
\end{itemize}
When $n=1,2$, the labeled elements are listed as follows:
$$\overbrace{1}^y1^I,~\overbrace{1}^y1\overbrace{2}^y2^I,~\underbrace{1\overbrace{2}^x2}_z1^I,~\overbrace{2}^x2\underbrace{1}_z1^I.$$
Note that $D_G(I)=Iy$ and $D_G^2(I)=Iy^2+2Ixz$. Hence the result holds for $n=1,2$. 
We proceed by induction. Now assume that $m\geqslant 2$ and $\sigma\in\mq_{m-1}$. Suppose that 
$\sigma'$ is Stirling permutation in $\mq_m$ created
from $\sigma$ by inserting the string $mm$.
Then the changes of labeling can be illustrated as follows:
$$\sigma^I\mapsto \sigma \overbrace{m}^ym^I;$$
$$\cdots\overbrace{\sigma_{i}}^x\sigma_{i+1}\cdots\mapsto \cdots \overbrace{m}^xm\underbrace{{\sigma_{i}}}_z\sigma_{i+1}\cdots~{\text{or}}~\cdots\underbrace{{\sigma_{i}}\overbrace{m}^xm}_z\sigma_{i+1}\cdots;$$
$$\cdots\overbrace{\sigma_{i}}^y\sigma_{i+1}\cdots\mapsto \cdots \overbrace{m}^xm\underbrace{{\sigma_{i}}}_z\sigma_{i+1}\cdots~{\text{or}}~\cdots\underbrace{{\sigma_{i}}\overbrace{m}^xm}_z\sigma_{i+1}\cdots.$$
Clearly, if we insert the string $mm$ into one of the two positions 
with a label $z$, we always get the same changes of labels as the insertion of $mm$ into one of the two position with a label $x$. 
Summing up all the cases shows that the assertion is valid for $m$ and the insertion of the string $mm$ corresponds to the operator $D_G$.
This completes the proof.
\end{proof}

\begin{theorem}\label{thmproper}
We have
$$\sum_{\sigma\in\mqn}x^{\operatorname{implap}(\sigma)}y^{\operatorname{plap}(\sigma)}=\sum_{\pi\in\msn}x^{\exc(\pi)}y^{\fix(\pi)}2^{n-\cyc(\pi)}.$$
When $y=x$, we get 
$$\sum_{\sigma\in\mqn}x^{\operatorname{lap}(\sigma)}=\sum_{\pi\in\msn}x^{\exc(\pi)+\fix(\pi)}2^{n-\cyc(\pi)}.$$
\end{theorem}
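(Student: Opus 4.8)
The plan is to combine the two grammatical descriptions already in hand: Lemma~\ref{lemmaap}, which realizes the left ascent-plateau statistics on $\mqn$ through the grammar $G=\{I\rightarrow Iy,~y\rightarrow 2xz,~x\rightarrow 2xz,~z\rightarrow 2xz\}$, and Lemma~\ref{lemmacycle}, which realizes the $\fix$-$\cyc$ Euler--Stirling statistics on $\msn$. Since the two lemmas already identify both sides of the theorem with values of formal derivatives, it suffices to connect the two grammars by a single change of grammars. To keep the alphabets apart I would rewrite Lemma~\ref{lemmacycle} with hatted letters as $\widehat G=\{\hat I\rightarrow \hat I\hat p\hat q,~\hat p\rightarrow \hat x\hat y,~\hat x\rightarrow \hat x\hat y,~\hat y\rightarrow \hat x\hat y\}$, so that $D_{\widehat G}^{\,n}(\hat I)=\hat I\sum_{\pi\in\msn}\hat x^{\exc(\pi)}\hat y^{\drop(\pi)}\hat p^{\fix(\pi)}\hat q^{\cyc(\pi)}$; note that $\hat q$ carries no derivation rule, so it is inert.

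The crux is the substitution
$$\phi(\hat I)=I,\qquad \phi(\hat p)=2y,\qquad \phi(\hat q)=\tfrac12,\qquad \phi(\hat x)=2x,\qquad \phi(\hat y)=2z,$$
viewed as a ring homomorphism over $\mathbb{Q}$ from the polynomial ring in the hatted letters into $\mathbb{Q}[I,x,y,z]$. I would check that $D_G(\phi(v))=\phi(D_{\widehat G}(v))$ for each letter $v$: for $\hat I$ one has $D_G(I)=Iy$ while $\phi(\hat I\hat p\hat q)=I\cdot 2y\cdot\tfrac12=Iy$; for $\hat x,\hat y,\hat p$ one uses that in $G$ the letters $x$, $y$, $z$ all share the formal derivative $2xz$, so $D_G(2x)=D_G(2y)=D_G(2z)=4xz=\phi(\hat x\hat y)$; and $\hat q$ is inert while $\phi(\hat q)$ is a constant, so $D_G(\phi(\hat q))=0=\phi(D_{\widehat G}(\hat q))$. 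Because $\phi$ is a homomorphism and $D_G$, $D_{\widehat G}$ are derivations, the relation $D_G\circ\phi=\phi\circ D_{\widehat G}$ extends from the letters to all polynomials, and applying it repeatedly gives $D_G^n(I)=\phi\bigl(D_{\widehat G}^{\,n}(\hat I)\bigr)$.

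Applying $\phi$ to the closed form of $D_{\widehat G}^{\,n}(\hat I)$ and using $\exc(\pi)+\drop(\pi)+\fix(\pi)=n$ yields
$$D_G^n(I)=I\sum_{\pi\in\msn}(2x)^{\exc(\pi)}(2z)^{\drop(\pi)}(2y)^{\fix(\pi)}\Bigl(\tfrac12\Bigr)^{\cyc(\pi)}=I\sum_{\pi\in\msn}x^{\exc(\pi)}y^{\fix(\pi)}z^{\drop(\pi)}2^{\,n-\cyc(\pi)}.$$
Comparing this with Lemma~\ref{lemmaap} and cancelling $I$ gives the refined three-variable identity
$$\sum_{\sigma\in\mqn}x^{\operatorname{implap}(\sigma)}y^{\operatorname{plap}(\sigma)}z^{\,n-\operatorname{lap}(\sigma)}=\sum_{\pi\in\msn}x^{\exc(\pi)}y^{\fix(\pi)}z^{\drop(\pi)}2^{\,n-\cyc(\pi)}.$$
Setting $z=1$ produces the first displayed formula of the theorem, and then specializing $y=x$ produces the second, which is exactly the $k=2$ instance of~\eqref{Ankk}.

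The five derivation-rule checks for $\phi$ are routine, so the only genuinely creative point---and the one I expect to be the main obstacle---is discovering the substitution $\phi$ itself. The guiding heuristic is that the symmetry of $\hat x$ and $\hat y$ in $\widehat G$ (both with derivative $\hat x\hat y$) must be matched by a pair of letters of $G$ with a common derivative, and in $G$ only $x$, $y$, $z$ qualify; once $\hat x\mapsto 2x$ and $\hat y\mapsto 2z$ are fixed, the requirement $\phi(\hat I\hat p\hat q)=D_G(I)=Iy$ forces $\hat p\mapsto 2y$ and $\hat q\mapsto\tfrac12$. Before writing out the details I would confirm the refined identity by hand for $n\le 3$ as a sanity check.
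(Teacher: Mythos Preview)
Your argument is correct and follows the same overall strategy as the paper---identifying the grammar of Lemma~\ref{lemmaap} with the cycle-structure grammar for permutations---but the two executions differ in how the exponent $n-\cyc(\pi)$ is produced. The paper introduces a \emph{new} grammatical labeling of permutations (placing a $q$ on every non-minimal element of each non-singleton cycle) so that the resulting grammar $\{I\rightarrow Ip,\;p\rightarrow qxy,\;x\rightarrow qxy,\;y\rightarrow qxy\}$ is, after the literal renaming $(p,x,y,q)\mapsto(y,x,z,2)$, identical to the grammar of Lemma~\ref{lemmaap}. You instead keep Lemma~\ref{lemmacycle} unchanged and pass to a $\mathbb{Q}$-algebra homomorphism sending $\hat q\mapsto\tfrac12$ and $\hat p,\hat x,\hat y\mapsto 2y,2x,2z$, exploiting $\exc+\drop+\fix=n$ to convert $2^{n}\cdot(\tfrac12)^{\cyc}$ into $2^{\,n-\cyc}$. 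Your route is purely algebraic and avoids inventing a new labeling; the paper's route is slightly more combinatorial and explains the factor $2^{\,n-\cyc}$ directly at the level of the labeling. Either way the key insight---that the three letters $x,y,z$ of Lemma~\ref{lemmaap}, all with derivative $2xz$, must correspond to the three letters $\hat x,\hat y,\hat p$ of Lemma~\ref{lemmacycle}, all with derivative $\hat x\hat y$---is the same, and your refined three-variable identity (before setting $z=1$) is a nice byproduct.
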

\begin{proof}
We first introduce a grammatical labeling of $\pi\in\msn$:
\begin{itemize}
  \item [\rm ($L_1$)]If $\pi(i)$ is an excedance, then put a superscript label $x$ just before $\pi(i)$;
  \item [\rm ($L_2$)]If $\pi(i)$ is a drop, then put a superscript label $y$ just right after $i$;
 \item [\rm ($L_3$)]If $\pi(i)$ is a fixed point, then put a superscript label $p$ right after $i$;
 \item [\rm ($L_4$)]Put a superscript label $I$ at the end of $\pi$;
  \item [\rm ($L_5$)]For any cycle with at least two elements, put a subscript label $q$ right after each element except the smallest one.
\end{itemize}
For example, below shows the grammatical labeling of $\pi=(1,3,5)(2,6,4)(7)$:
$$(1^x3^x_q5^y_q)(2^x6^y_q4^y_q)(7^p)^I.$$
As a variant of Lemma~\ref{lemmacycle}, it is easy to verify that 
If
\begin{equation}\label{keylemma}
G=\{I\rightarrow Ip, p\rightarrow qxy, x\rightarrow qxy, y\rightarrow qxy\},
\end{equation}
then we have
$$D_{G}^n(I)=I\sum_{\pi\in\msn}x^{\exc(\pi)}y^{\drop(\pi)}p^{\fix(\pi)}q^{n-\cyc(\pi)}.$$
Comparing this Lemma~\eqref{lemmaap}, we get the desired result.
\end{proof}

In the same way, for $\sigma\in\mq_n^{(k)}$, we say that an entry $\sigma_i$ is a {\it proper left ascent-plateau} if it
satisfies two conditions: $(i)$ $\sigma_i$ is a longest left ascent-plateau, i.e., $\sigma_{i-1}<\sigma_{i}=\cdots=\sigma_{i+k-1}$; $(ii)$ if $\sigma_j<\sigma_i$, then $j<i$, or $i=1$ and $\sigma_1=\sigma_2=\cdots=\sigma_k=1$. Let $\operatorname{plap}(\sigma)$ be the number of proper left ascent-plateaux of $\sigma$, and 
set $\operatorname{implap}(\sigma)=\lap(\sigma)-\operatorname{plap}(\sigma)$.
The following result follows from the same argument
as the proof of Lemma~\ref{lemmaap}, and we omit its proof for simplicity.
\begin{lemma}\label{lemmaapp}
Let $G=\{I\rightarrow Iy,~y\rightarrow kxz,~x\rightarrow kxz,~z\rightarrow kxz\}$. Then we have 
$$D_G^n(I)=I\sum_{\sigma\in\mqn^{(k)}}x^{\operatorname{implap}(\sigma)}y^{\operatorname{plap}(\sigma)}z^{n-\operatorname{lap}(\sigma)}.$$
\end{lemma}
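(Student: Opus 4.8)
The plan is to mimic the proof of Lemma~\ref{lemmaap} essentially verbatim, replacing the insertion of the doubled string $mm$ by the insertion of the $k$-fold string $\underbrace{m\cdots m}_{k}$, and to check that the bookkeeping of labels still matches the grammar $G=\{I\rightarrow Iy,~y\rightarrow kxz,~x\rightarrow kxz,~z\rightarrow kxz\}$. Since the statement records that the result follows from the same argument and the proof is omitted, here I describe the argument one would give if it were written out in full.

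First I would set up the grammatical labeling of a $k$-Stirling permutation $\sigma\in\mq_n^{(k)}$: place a label $I$ at the very end of $\sigma$; for each proper left ascent-plateau $\sigma_i$ (so $\sigma_{i-1}<\sigma_i=\cdots=\sigma_{i+k-1}$ with all smaller entries to the left), attach a superscript $y$ to the pair of positions immediately before $\sigma_i$ and immediately after the block $\sigma_i\cdots\sigma_{i+k-1}$; for each improper left ascent-plateau, attach a superscript $x$ to the same two flanking positions; and on the remaining (necessarily even number of) gaps, attach subscript labels $z$ to pairwise-nearest positions scanned left to right. One checks the base cases $n=1$ (a single block $\underbrace{1\cdots1}{}^{I}$ labeled $y$, matching $D_G(I)=Iy$) and $n=2$, where inserting $\underbrace{2\cdots2}{}$ into $\underbrace{1\cdots1}{}$ either lands at the far right, creating a new $y$-labeled proper plateau, or lands strictly inside, turning $1$'s plateau improper and producing the pattern matching $D_G^2(I)=Iy^2+kxz$ — the factor $k$ coming from the $k+1$ internal slots of the block $1\cdots1$, of which the extreme right one is the $y$-case and the other $k$ are the $xz$-cases.

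Then I would run the induction: assume the identity for $\mq_{m-1}^{(k)}$, take $\sigma'\in\mq_m^{(k)}$ obtained from $\sigma\in\mq_{m-1}^{(k)}$ by inserting the string $\underbrace{m\cdots m}_{k}$ into one of the $k(m-1)+1$ admissible slots, and trace how each of the four label types evolves. Inserting at the $I$-slot sends $\sigma^{I}\mapsto\sigma\,\overbrace{\underbrace{m\cdots m}{}}^{y}\,{}^{I}$, the operation $I\to Iy$. Inserting into any of the $k$ internal slots of an $x$-labeled or $y$-labeled block converts that block's plateau to improper, giving a new $\overbrace{m\cdots m}^{x}$ together with a freshly created $z$-pair around the old entry; this is the operation $x\to kxz$ and $y\to kxz$. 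Inserting into one of the two positions carrying a $z$ reproduces exactly the same local picture as the $x$-case, so it is also governed by $z\to kxz$. Summing over all slots, the insertion of $\underbrace{m\cdots m}{}$ realizes the derivation $D_G$, so $D_G^m(I)=I\sum_{\sigma\in\mq_m^{(k)}}x^{\implap(\sigma)}y^{\plap(\sigma)}z^{m-\lap(\sigma)}$, completing the induction.

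The only genuinely delicate point — and the one I expect to be the main obstacle in writing it carefully — is verifying that the "even number of leftover positions" in step $(iii)$ is preserved under insertion and that the $z$-pairs are well defined, i.e.\ that inserting the $k$-block into a non-$I$ slot always creates exactly one new $z$-pair (never an odd leftover, never a stray single position) and that the "pairwise nearest, left to right" rule is unambiguous; the factor $k$ in each non-$I$ production must be matched precisely against the $k$ internal slots of an already-labeled block, and one must confirm that a block that is currently a left-to-right-minimum plateau (the entries $1\cdots1$, labeled $y$) contributes $1$ slot to the $y$-preserving branch and $k$ slots to the $xz$-branch, consistently with $y\to kxz$ (not $(k+1)xz$). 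Once that accounting is checked in the same way as in Lemma~\ref{lemmaap}, the rest is routine, which is why the paper simply cites "the same argument."
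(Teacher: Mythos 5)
Your approach is exactly the one the paper intends: it omits the proof of Lemma~\ref{lemmaapp} precisely because it is the insertion argument of Lemma~\ref{lemmaap} with the string $mm$ replaced by $m^k$, and your slot count ($1$ slot for the production $I\to Iy$, a factor $k$ for each of $y\to kxz$, $x\to kxz$, $z\to kxz$, totalling $1+k(m-1)$ insertion positions) together with the base case $D_G^2(I)=Iy^2+kIxz$ is the heart of that argument. One bookkeeping point in your write-up does not transfer verbatim from $k=2$ and should be fixed when writing it out: for general $k$ each of the labels $x$, $y$, $z$ must account for exactly $k$ gaps of $\sigma$ (so that $kn+1=1+k\operatorname{plap}+k\operatorname{implap}+k(n-\lap)$), hence the leftover gaps form $z$-\emph{groups} of $k$ consecutive unlabeled positions rather than ``$z$-pairs,'' and the $k$ gaps attached to a plateau label are the gap immediately before $\sigma_i$ together with the $k-1$ gaps inside the block $\sigma_i\cdots\sigma_{i+k-1}$ --- not, as you describe, the gap before and the gap after the block (inserting $m^k$ immediately after the block leaves the plateau intact, so that gap must belong to a $z$-group or to the $I$-label). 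With that correction the divisibility worry you raise resolves itself ($kn+1-1-2k\cdot\tfrac{\lap}{2}$ should be read as $k(n-\lap)$, always a multiple of $k$), the phrase ``converts that block's plateau to improper'' should read ``destroys that plateau and creates a new improper one at $m$,'' and the induction closes exactly as in Lemma~\ref{lemmaap}.
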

Comparing Lemma~\ref{lemmaapp} with~\eqref{keylemma}, we immediately get the following result.
\begin{theorem}\label{thm17}
We have
$$\sum_{\sigma\in\mqn^{(k)}}x^{\operatorname{implap}(\sigma)}y^{\operatorname{plap}(\sigma)}
=\sum_{\pi\in\msn}x^{\exc(\pi)}y^{\fix(\pi)}k^{n-\cyc(\pi)}.$$
\end{theorem}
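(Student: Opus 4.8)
The plan is to read off both sides of the claimed identity from a single context-free grammar, in exactly the spirit of the proof of Theorem~\ref{thmproper}: no bijection between $\mqn^{(k)}$ and $\msn$ is needed, only a comparison of two evaluations of $D_G^n(I)$.

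First I would take $G=\{I\rightarrow Iy,~y\rightarrow kxz,~x\rightarrow kxz,~z\rightarrow kxz\}$, the grammar of Lemma~\ref{lemmaapp}, so that
$$D_G^n(I)=I\sum_{\sigma\in\mqn^{(k)}}x^{\operatorname{implap}(\sigma)}y^{\operatorname{plap}(\sigma)}z^{n-\operatorname{lap}(\sigma)}.$$
Next I would observe that this $G$ is obtained from the grammar in~\eqref{keylemma}, namely $\{I\to Ip,\ p\to qxy,\ x\to qxy,\ y\to qxy\}$, by specializing the formal variable $q$ to the constant $k$ together with the relabelling $p\mapsto y$, $y\mapsto z$ (and $x$ unchanged); this is a genuine relabelling since it is injective on the letters $I,p,x,y$ and sends $q$ to a scalar, which is annihilated by $D_G$. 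Because~\eqref{keylemma} asserts $D^n(I)=I\sum_{\pi\in\msn}x^{\exc(\pi)}y^{\drop(\pi)}p^{\fix(\pi)}q^{n-\cyc(\pi)}$ for an arbitrary formal $q$, applying the same specialization and relabelling gives
$$D_G^n(I)=I\sum_{\pi\in\msn}x^{\exc(\pi)}z^{\drop(\pi)}y^{\fix(\pi)}k^{n-\cyc(\pi)}.$$
(Here the point that makes the two grammars align is the relation $\exc(\pi)+\drop(\pi)+\fix(\pi)=n$, which is exactly why the exponent of $k$ reads $n-\cyc(\pi)$.)

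Comparing the two expressions for $D_G^n(I)$ and cancelling $I$ then yields the three-variable refinement
$$\sum_{\sigma\in\mqn^{(k)}}x^{\operatorname{implap}(\sigma)}y^{\operatorname{plap}(\sigma)}z^{n-\operatorname{lap}(\sigma)}=\sum_{\pi\in\msn}x^{\exc(\pi)}y^{\fix(\pi)}z^{\drop(\pi)}k^{n-\cyc(\pi)},$$
and setting $z=1$ — which simply removes the exponents $n-\operatorname{lap}(\sigma)$ and $\drop(\pi)$ — produces the statement of Theorem~\ref{thm17}. This is the one-line argument the text already anticipates with the phrase ``comparing Lemma~\ref{lemmaapp} with~\eqref{keylemma}''.

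Strictly speaking nothing remains to be done once Lemma~\ref{lemmaapp} and~\eqref{keylemma} are granted, so the genuine work is entirely upstream, and the step I would flag as the only real obstacle is justifying those two inputs: Lemma~\ref{lemmaapp} by the labelling-and-insertion argument indicated as identical to the proof of Lemma~\ref{lemmaap} (inserting the block $m^k$ and tracking how a proper/improper left ascent-plateau label is created or destroyed), and~\eqref{keylemma} as the variant of Lemma~\ref{lemmacycle} verified by the grammatical labelling $(L_1)$–$(L_5)$ of the standard cycle decomposition. If one wanted to avoid grammars altogether, the alternative would be to build an explicit bijection $\mqn^{(k)}\to\msn$ carrying improper left ascent-plateaux to excedances, proper left ascent-plateaux to fixed points, and $n-\operatorname{lap}$ to $\drop$, with each non-trivial cycle contributing a factor $k$; but the grammar comparison makes this superfluous, and I would not pursue it.
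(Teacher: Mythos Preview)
Your proposal is correct and is precisely the paper's own argument: the paper states only ``Comparing Lemma~\ref{lemmaapp} with~\eqref{keylemma}, we immediately get the following result,'' and you have simply spelled out the relabelling $p\mapsto y$, $y\mapsto z$, $q\mapsto k$ that makes this comparison exact, together with the specialization $z=1$.
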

The reader is referred to~\eqref{Ankk} for a special case of Theorem~\ref{thm17}, which said that
\begin{equation*}
x^nA_n^{(k)}(1/x)=\sum_{\sigma\in\mqn^{(k)}}x^{\operatorname{lap}(\sigma)}=\sum_{\pi\in\msn}x^{\exc(\pi)+\fix(\pi)}k^{n-\cyc(\pi)}.
\end{equation*}
%%%%%%%%%%%%%%%%%%%%%%%%%%%%%%%%%%%%%%%%%%%%%%
%%%%%%%%%%%%%%%%%%%%%%%%%%%%%%%%%%%%%%%%%%%%%%
%%%%%%%%%%%%%%%%%%%%%%%%%%%%%%%%%%%%%%%%%%%%%%
%%%%%%%%%%%%%%%%%%%%%%%%%%%%%%%%%%%%%%%%%%%%%%%%%%%%%%%%%%%%%%%%%%%%%%%%%%%%%%%%%
%%%%%%%%%%%%%%%%%%%%%%%%%%%%%%%%%%%%%%%%%%%%%%%%%
%%%%%%%%%%%%%%%%%%%%%%%%%%%%%%%%%%%%%%%%%%%
\subsection{Proper ascents, improper ascents, proper descents and improper descents}
\hspace*{\parindent}
%%%%%%%%%%%%%%%%%%%%%%%%%%%%%%%%%%%%%%%%%%%%%%%%%%%%%%%%%%%%%%%%%%%%%%%%%%%%%%%%%%
%%%%%%%%%%%%%%%%%%%%%%%%%%%%%%%%%%%%%%%%%%%%%%%%%%

\begin{definition}
An entry $\pi(i)$ of $\pi\in\msn$ is called a {\it proper ascent} if it
satisfies two conditions: $(i)$ $\pi(i-1)<\pi(i)$, 
where $2\leqslant i\leqslant n$; $(ii)$ if $\pi(j)<\pi(i)$, then $j<i$.
\end{definition}
\begin{definition}
An entry $\pi(i)$ of $\pi\in\msn$ is called a {\it proper descent} if it
satisfies two conditions: $(i)$ $\pi(i)>\pi(i+1)$, 
where $i\in[n-1]$; $(ii)$ if $\pi(j)<\pi(i)$, then $j>i$.
\end{definition}
Let $\operatorname{pasc}(\pi)$ (resp.~$\operatorname{pdes}(\pi)$) be the 
number of proper ascents (resp.~proper descents) of $\pi$. 
The number of {\it improper ascents} and {\it improper descents} of $\pi$ are respectively defined by
$$\operatorname{impasc}(\pi)=\asc^*(\pi)-\operatorname{pasc}(\pi),$$
$$\operatorname{impdes}(\pi)=\des^*(\pi)-\operatorname{pdes}(\pi).$$

\begin{example} 
We have
$$\operatorname{{pasc}}(1\textbf{23})=2,~\operatorname{{pasc}}({1}32)=0,~\operatorname{{pasc}}(21\textbf{3})=1,$$
$$\operatorname{{pasc}}(231)=0,~\operatorname{{pasc}}(31\textbf{2})=1,~\operatorname{{pasc}}(321)=0;$$
$$\operatorname{{pdes}}({123})=0,~\operatorname{{pdes}}({1}32)=0,~\operatorname{{pdes}}(\textbf{2}1{3})=1,$$
$$\operatorname{{pdes}}(231)=0,~\operatorname{{pdes}}(\textbf{3}1{2})=1,~\operatorname{{pdes}}(\textbf{32}1)=2.$$
\end{example}

We now ready to give one of the central findings of this paper, which is different from~\eqref{Xu01}, since in~\eqref{Xu01},  
it must be $xy=u_1u_2$ and $x+y=u_3+u_4$.
\begin{theorem}\label{thm24}
We have
$$\sum_{\pi\in\ms_{n+1}}x_1^{\operatorname{impdes}(\pi)}x_2^{\operatorname{pdes}(\pi)}y_1^{\operatorname{impasc}(\pi)}
y_2^{\operatorname{pasc}(\pi)}p^{\lrmin(\pi)-1}q^{\rlmin(\pi)-1}
=A_n\left(x_1,y_1,\frac{p x_2+q y_2}{p+q},p+q\right).$$
\end{theorem}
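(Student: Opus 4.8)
The plan is to prove Theorem~\ref{thm24} via the grammar calculus that has served throughout Section~\ref{section4}, mirroring exactly the strategy used in Proposition~\ref{thmab}. First I would set up a grammatical labeling of permutations $\pi\in\ms_{n+1}$ written in standard cycle decomposition, in the spirit of labelings $(L_1)$--$(L_4)$ in the proof of Proposition~\ref{thmab}, but now recording six statistics: a superscript $x_1$ for each improper descent, $x_2$ for each proper descent, $y_1$ for each improper ascent, $y_2$ for each proper ascent, together with labels tracking $\lrmin$ and $\rlmin$ via the left and right growth letters. The key structural observation I would exploit is that a fixed point of the cycle form of $\pi$ corresponds — under the standard fundamental bijection / word reading — to an entry that is simultaneously ``proper'' on both sides, so its contribution should be $(p x_2 + q y_2)/(p+q)$ after normalization, while an excedance contributes $x_1$ (improper descent of the associated word) and a drop contributes $y_1$. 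This is precisely the combinatorial content that makes the substitution $p\mapsto (px_2+qy_2)/(p+q)$, $q\mapsto p+q$ in $A_n(x,y,p,q)$ the right target.

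Concretely, I would start from Lemma~\ref{lemmacycle} with its grammar $G=\{I\rightarrow Ipq,\ p\rightarrow xy,\ x\rightarrow xy,\ y\rightarrow xy\}$, so that $D_G^n(I)=I\,A_n(x,y,p,q)$. Then, as in Proposition~\ref{thmab}, I would perform a change of variables: set $I'=$ (the product of the left-growth and right-growth letters, playing the role of $ab$), and introduce $J = p x_2 + q y_2$ where now $p,q$ themselves are split into a left-letter $a$ (with $D(a)=a\alpha x_2$-type behavior, $\alpha\leftrightarrow$ counting $\lrmin$) and a right-letter $b$. The cleanest route is: take the grammar $H=\{a\rightarrow a p x_2,\ b\rightarrow b q y_2,\ x_1\rightarrow ?,\dots\}$ — more precisely I would combine Lemma~\ref{lemmaJi}'s grammar (which produces $\lrmin,\rlmin$ from the $a,b$ letters and $\asc^*,\des^*$ from $x,y$) with the fixed-point/excedance/drop refinement of Lemma~\ref{lemmacycle}. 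Computing $D_H(ab) = ab(p x_2 + q y_2)$ and $D_H(p x_2 + q y_2) = (p+q)\,(\text{something in } x_1,y_1)$, I would then verify by induction — exactly the computation done for \eqref{DFI} — that
\begin{equation*}
D_H^n(ab)=ab\sum_{\pi\in\msn}x_1^{\exc(\pi)}y_1^{\drop(\pi)}(p x_2+q y_2)^{\fix(\pi)}(p+q)^{\cyc(\pi)-\fix(\pi)}.
\end{equation*}
Finally, interpreting the left side via the six-statistic labeling of $\ms_{n+1}$ (so that $\cyc-\fix$ distributes as $\exc+\drop$ over the non-fixed-point positions, each carrying its own $x_1/y_1$ vs. the doubled weight) gives the left-hand side of the theorem, and interpreting the right side via \eqref{Anxpq-EGF} / the definition of $A_n(x,y,p,q)$ gives $A_n(x_1,y_1,(px_2+qy_2)/(p+q),p+q)$ after pulling the factor $(p+q)^{\cyc-\fix}$ into the $q$-slot.

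The main obstacle I expect is the bookkeeping in the combinatorial interpretation of the left-hand side: one must check carefully that under the passage from the cycle form of $\pi\in\msn$ to the word form of a permutation in $\ms_{n+1}$ (and the shift $n\mapsto n+1$, $\lrmin-1$, $\rlmin-1$ exponents), an excedance becomes an improper descent, a drop becomes an improper ascent, and a fixed point becomes an entry that is proper on the relevant side — and that the ``proper ascent'' versus ``proper descent'' dichotomy for that entry is exactly governed by whether it is counted by the $\lrmin$-growth letter or the $\rlmin$-growth letter, which is what produces the weight $(p x_2 + q y_2)$ rather than $p(x_2+y_2)$ or similar. Getting the definitions of $\operatorname{pasc},\operatorname{pdes},\operatorname{impasc},\operatorname{impdes}$ to align precisely with the four grammar letters $y_2,x_2,y_1,x_1$ — including the boundary conventions $\pi(0)=\pi(n+1)=0$ and the ``$i=1$'' edge cases in the definitions — is the delicate part; the grammar induction itself is then routine, being a direct variant of the computations already carried out for Lemma~\ref{lemmacycle}, Lemma~\ref{lemmaJi}, and \eqref{DFI}.
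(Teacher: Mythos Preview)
Your overall architecture is exactly the paper's: introduce a grammar with two ``growth'' letters $I,J$ (your $a,b$) and rules $I\to Ipx_2$, $J\to Jqy_2$, $x_1,x_2,y_1,y_2\to x_1y_1$; show $D_G^n(IJ)$ equals the left-hand side by a labeling of $\ms_{n+1}$; then change variables via $a=IJ$, $b=px_2+qy_2$ (so $D(a)=ab$, $D(b)=(p+q)x_1y_1$) and invoke the variant of Lemma~\ref{lemmacycle} to identify the result with $A_n(x_1,y_1,(px_2+qy_2)/(p+q),p+q)$.

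Where you go astray is in the framing around the fundamental bijection. The paper does \emph{not} label $\pi\in\ms_{n+1}$ in cycle form, nor does it pass from cycle form to word form; the six-statistic labeling is done directly on the one-line word of $\pi$, with $I$ at the front, $J$ at the end, and the labels $x_1,x_2,y_1,y_2,p,q$ attached at positions according to the (word-level) definitions of $\operatorname{impdes},\operatorname{pdes},\operatorname{impasc},\operatorname{pasc},\lrmin,\rlmin$. One then checks, by inserting the new largest value $n{+}1$ into each labeled position, that the action on weights is exactly $D_G$. Your ``main obstacle'' paragraph --- matching excedances to improper descents, fixed points to proper entries, etc., via the fundamental correspondence --- describes a bijective check that is neither needed nor carried out in the paper, and attempting it would likely bog you down (the correspondence at the level of individual permutations is not claimed). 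The equality of the two sides comes purely from both being $D_G^n(IJ)$ in the same grammar; drop the cycle-form/word-form translation and the proof is straightforward.
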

\begin{proof}
We now define the {\it $A$-labeling} for $\pi\in \msn$. 
Put a label $I$ at the front of $\pi$ and put a label $J$ at the end of $\pi$.
Then the grammatical labeling of $\pi(i)$ can be given as follows:
\begin{itemize}
  \item [\rm ($L_1$)]If $\pi(i)$ is an improper descent, then put a label $x_1$ right after $\pi(i)$;
  \item [\rm ($L_2$)]If $\pi(i)$ is a proper descent, then put a label $x_2$ right after $\pi(i)$;
 \item [\rm ($L_3$)]If $\pi(i)$ is an improper ascent, then put a label $y_1$ just before $\pi(i)$;
 \item [\rm ($L_4$)]If $\pi(i)$ is an proper ascent, then put a label $y_2$ just before $\pi(i)$;
  \item [\rm ($L_5$)]If $\pi(i)$ is a left-to-right minimum and $\pi(i)\neq1$, then put a label $p$ right after $\pi(i)$;
  \item [\rm ($L_6$)]If $\pi(i)$ is a right-to-left right minimum and $\pi(i)\neq1$, then put a label $q$ right after $\pi(i)$.
\end{itemize}
The weight of $\pi$ is
defined as the product of its labels, that is,
$$w(\pi)=IJx_1^{\operatorname{impdes}(\pi)}x_2^{\operatorname{pdes}(\pi)}y_1^{\operatorname{impasc}(\pi)}y_2^{\operatorname{pasc}(\pi)}p^{\lrmin(\pi)-1}q^{\rlmin(\pi)-1}.$$
When $n=1,2,3$, the labeled permutations can listed as follows:
$^I1^J,~^I1^{y_2}2_q^J,~^I2^{x_2}_p1^J$,
$$~^I1^{y_2}2_q^J\rightarrow \left\{
  \begin{array}{ll}
    ^I1^{y_2}2^{y_2}_q3_q^J, &  \\
   ^I1^{y_1}3^{x_1}2_q^J, &\\
   ^I3_p^{x_2}12_q^J;
  \end{array}
\right.~~^I2^{x_2}_p1^J\rightarrow \left\{
  \begin{array}{ll}
    ^I2_p^{x_2}1^{y_2}3_q^J, &  \\
   ^I2_p^{y_1}3^{x_1}1^J, &\\
   ^I32_p^{x_2}1^J.
  \end{array}
\right.$$
By induction, it is routine to verify that if 
\begin{equation}\label{G3}
G=\{I\rightarrow Ipx_2,~J\rightarrow Jqy_2,~x_1\rightarrow x_1y_1,~x_2\rightarrow x_1y_1,~y_1\rightarrow x_1y_1,~y_2\rightarrow x_1y_1\},
\end{equation}
then we have $$D_G^n(IJ)=IJ\sum_{\pi\in\ms_{n+1}}x_1^{\operatorname{impdes}(\pi)}x_2^{\operatorname{pdes}(\pi)}
y_1^{\operatorname{impasc}(\pi)}y_2^{\operatorname{pasc}(\pi)}p^{\lrmin(\pi)-1}q^{\rlmin(\pi)-1}.$$
For the grammar given by~\eqref{G3}, it should be noted that
$$D_G(IJ)=IJ(px_2+qy_2),~D_G(px_2+qy_2)=(p+q)x_1y_1.$$
Setting $a=IJ$ and $b=px_2+qy_2$, then we get a new grammar
$$H=\{a\rightarrow ab,~b\rightarrow (p+q)x_1y_1,~x_1\rightarrow x_1y_1,~y_1\rightarrow x_1y_1\}.$$ 
As a variant of Lemma~\ref{lemmacycle}, it is easy to verify that 
$$D_H^n(a)=a\sum_{\pi\in\msn} x_1^{\exc(\pi)}y_1^{\drop(\pi)}b^{\fix(\pi)}(p+q)^{\cyc(\pi)-\fix(\pi)}.$$
Upon taking $a=IJ$ and $b=px_2+qy_2$ in the above expression, we get the desired result.
\end{proof}

Since the grammar~\eqref{G3} may be used in future, the associated labeling scheme is called $A$-labeling.
It is worth noting that Theorem~\ref{thm24} contains some interesting particular cases. 
\begin{corollary}\label{cor20}
We have 
\begin{itemize}
  \item When $x_1=y_1=1$, we have
  $$\sum_{\pi\in\ms_{n+1}}x^{\operatorname{pdes}(\pi)}y^{\operatorname{pasc}(\pi)}p^{\lrmin(\pi)-1}q^{\rlmin(\pi)-1}
=\sum_{\pi\in\msn}\left(px+qy\right)^{\fix(\pi)}(p+q)^{\cyc(\pi)-\fix(\pi)};$$
  \item When $x_2=y_2=1$, we have
  $$\sum_{\pi\in\ms_{n+1}}x^{\operatorname{impdes}(\pi)}y^{\operatorname{impasc}(\pi)}
p^{\lrmin(\pi)-1}q^{\rlmin(\pi)-1}
=\sum_{\pi\in\msn}x^{\exc(\pi)}y^{\drop(\pi)}(p+q)^{\cyc(\pi)};$$
 \item When $y_1=y_2=p=q=1$, we have
 $$\sum_{\pi\in\ms_{n+1}}x^{\operatorname{impdes}(\pi)}y^{\operatorname{pdes}(\pi)}
=\sum_{\pi\in\msn}x^{\exc(\pi)}(1+y)^{\fix(\pi)}2^{\cyc(\pi)-\fix(\pi)};$$
 \item When $y_1=y_2=p=q=1$ and $x_2=x_1=x$, we have
 $$A_{n+1}(x)=x\sum_{\pi\in\ms_{n+1}}x^{\operatorname{impdes}(\pi)+\operatorname{pdes}(\pi)}=\sum_{\pi\in\msn}x^{\exc(\pi)+1}(1+x)^{\fix(\pi)}2^{\cyc(\pi)-\fix(\pi)}.$$
\end{itemize}
\end{corollary}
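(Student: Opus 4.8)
The plan is to derive all four identities as specializations of Theorem~\ref{thm24}. First I would rewrite its right-hand side transparently: since $A_n(x,y,p,q)=\sum_{\pi\in\msn}x^{\exc(\pi)}y^{\drop(\pi)}p^{\fix(\pi)}q^{\cyc(\pi)}$, the factor $(p+q)^{\fix(\pi)}$ produced by the parametrization cancels the denominator, so that
$$A_n\left(x_1,y_1,\frac{px_2+qy_2}{p+q},p+q\right)=\sum_{\pi\in\msn}x_1^{\exc(\pi)}y_1^{\drop(\pi)}(px_2+qy_2)^{\fix(\pi)}(p+q)^{\cyc(\pi)-\fix(\pi)}.$$
Thus Theorem~\ref{thm24} equates the six-variable sum over $\ms_{n+1}$ with this sum over $\msn$, and each bullet is obtained by plugging in the indicated values.

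Concretely, I would proceed as follows. For the first item, set $x_1=y_1=1$: on the right the factor $x_1^{\exc(\pi)}y_1^{\drop(\pi)}$ disappears, and on the left the factor $x_1^{\operatorname{impdes}(\pi)}y_1^{\operatorname{impasc}(\pi)}$ disappears; renaming $x_2,y_2$ as $x,y$ gives the claim. For the second, set $x_2=y_2=1$, so $px_2+qy_2=p+q$ and hence $(px_2+qy_2)^{\fix(\pi)}(p+q)^{\cyc(\pi)-\fix(\pi)}=(p+q)^{\cyc(\pi)}$, while the left side loses $x_2^{\operatorname{pdes}(\pi)}y_2^{\operatorname{pasc}(\pi)}$; renaming $x_1,y_1$ as $x,y$ gives the claim. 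For the third, set $y_1=y_2=p=q=1$: the right side becomes $\sum_{\pi\in\msn}x_1^{\exc(\pi)}(x_2+1)^{\fix(\pi)}2^{\cyc(\pi)-\fix(\pi)}$ and the left side becomes $\sum_{\pi\in\ms_{n+1}}x_1^{\operatorname{impdes}(\pi)}x_2^{\operatorname{pdes}(\pi)}$; renaming $x_1,x_2$ as $x,y$ gives the claim.

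For the fourth item I would specialize the third further by $x_2=x_1=x$. The only point needing attention is the left-hand side: from $\operatorname{impdes}(\pi)=\des^*(\pi)-\operatorname{pdes}(\pi)$ and $\des(\pi)=\des^*(\pi)+1$ one gets $\operatorname{impdes}(\pi)+\operatorname{pdes}(\pi)=\des(\pi)-1$, whence $\sum_{\pi\in\ms_{n+1}}x^{\operatorname{impdes}(\pi)+\operatorname{pdes}(\pi)}=x^{-1}A_{n+1}(x)$ by the definition $A_{n+1}(x)=\sum_{\pi\in\ms_{n+1}}x^{\des(\pi)}$. Multiplying by $x$ then gives $A_{n+1}(x)$ on the left and $\sum_{\pi\in\msn}x^{\exc(\pi)+1}(1+x)^{\fix(\pi)}2^{\cyc(\pi)-\fix(\pi)}$ on the right. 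None of the steps is a real obstacle; the subtlest point is precisely this bookkeeping identity $\operatorname{impdes}+\operatorname{pdes}=\des-1$, which brings the classical Eulerian polynomial into play.
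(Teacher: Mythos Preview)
Your proposal is correct and matches the paper's approach: the paper presents Corollary~\ref{cor20} without a detailed proof, simply as ``interesting particular cases'' of Theorem~\ref{thm24}, and your argument carries out exactly these specializations, including the bookkeeping $\operatorname{impdes}(\pi)+\operatorname{pdes}(\pi)=\des^*(\pi)=\des(\pi)-1$ needed for the fourth item.
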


Recall that $\mqn^{(1)}=\msn$. In the sequel, we give an extension of the $k=1$ case of Theorem~\ref{thm17}.
The number of {\it augmented proper ascents of $\pi$} is defined by
$$\operatorname{\widehat{pasc}}(\pi)=\left\{
  \begin{array}{ll}
    \operatorname{{pasc}}(\pi),~{\text{if $\pi(1)\neq 1$} };&  \\
   \operatorname{{pasc}}(\pi)+1,~{\text{if $\pi(1)=1$}}. &
  \end{array}
\right.$$
In other words, we say that an entry $\pi(i)$ of $\pi\in\msn$ is an {\it augmented proper ascent} if it
satisfies two conditions: $(i)$ $\pi(i-1)<\pi(i)$, 
where $i\in [n]$ and $\pi(0)=0$; $(ii)$ if $\pi(j)<\pi(i)$, then $j<i$, or $i=1$ and $\pi(1)=1$.
An {\it augmented improper ascent of $\pi$} is an entry $\pi(i)$ such that $\pi(i-1)<\pi(i)$, but $\pi(i)$ is not an augmented proper ascent, where $i\in[n]$ and we set $\pi(0)=0$.
Let $\operatorname{\widehat{impasc}}(\pi)$ be the number of augmented improper ascents of $\pi$. Then $$\operatorname{\widehat{impasc}}(\pi)=\asc(\pi)-\operatorname{\widehat{pasc}}(\pi).$$

\begin{example} 
We have
$$\operatorname{\widehat{pasc}}(\textbf{123})=3,~\operatorname{\widehat{pasc}}(\textbf{1}32)=1,~\operatorname{\widehat{pasc}}(21\textbf{3})=1,$$
$$\operatorname{\widehat{pasc}}(231)=0,~\operatorname{\widehat{pasc}}(31\textbf{2})=1,~\operatorname{\widehat{pasc}}(321)=0;$$
$$\operatorname{\widehat{impasc}}(123)=0,~\operatorname{\widehat{impasc}}(1\textbf{3}2)=1,~\operatorname{\widehat{impasc}}(\textbf{2}13)=1,$$
$$\operatorname{\widehat{impasc}}(\textbf{23}1)=2,~\operatorname{\widehat{impasc}}(\textbf{3}12)=1,~\operatorname{\widehat{impasc}}(\textbf{3}21)=1.$$
\end{example}
We end this section by giving the following result.
\begin{theorem}
We have 
$$\sum_{\pi\in\msn}x^{\operatorname{\widehat{impasc}}(\pi)}y^{\des^*(\pi)}p^{\operatorname{\widehat{pasc}}(\pi)}q^{\rlmin(\pi)}=
\sum_{\pi\in\msn}x^{\exc(\pi)}y^{\drop(\pi)}p^{\fix(\pi)}q^{\cyc(\pi)},$$
which implies that $(\operatorname{\widehat{impasc}},\des^*)$ is a symmetric distribution over $\msn$.
\end{theorem}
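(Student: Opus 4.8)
The plan is to reduce the identity to Lemma~\ref{lemmacycle} by constructing a grammatical labeling of one-line permutations that realizes the four left-hand statistics through the grammar $G=\{I\to Ipq,\ p\to xy,\ x\to xy,\ y\to xy\}$. Concretely, I will build $\pi\in\msn$ by inserting the letters $1,2,\dots,n$ in increasing order, each time placing the new letter (the current maximum) into one of the available slots of the word already formed on the smaller letters. Because the inserted letter is the largest one present, it can only be appended at the far right, or wedged into a consecutive pair, which it always splits into an ascent followed by a descent; this rigidity is what keeps the bookkeeping under control.

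For the labeling, I attach an $I$ at the right end of the word (the ``append'' slot), a label $p$ immediately in front of each augmented proper ascent, a label $x$ immediately in front of each augmented improper ascent, and a label $y$ immediately in front of each entry $\pi(i)$ with $\pi(i-1)>\pi(i)$, where $\pi(0)=0$. Since $\operatorname{\widehat{pasc}}(\pi)+\operatorname{\widehat{impasc}}(\pi)+\des^*(\pi)=n$ (every consecutive pair of $0\,\pi(1)\cdots\pi(n)$ is an ascent or a descent, and the first is always an ascent), each of the $n$ positions carries exactly one of $p,x,y$, so the label monomial of $\pi$ is $I\,x^{\operatorname{\widehat{impasc}}(\pi)}y^{\des^*(\pi)}p^{\operatorname{\widehat{pasc}}(\pi)}q^{?}$, with the $q$-exponent still to be accounted for. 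I will then check, after verifying the base cases $n=1,2$, the transition rules under insertion of the new maximum $m$: appending $m$ at the right makes it an augmented proper ascent and a new right-to-left minimum while leaving every other label untouched, giving $I\mapsto Ipq$; inserting $m$ in front of any $p$-, $x$-, or $y$-labeled entry makes that entry a descent (so its label becomes $y$) and makes $m$ an augmented improper ascent (a new $x$), while fixing all other labels and all right-to-left minima, giving $p\mapsto xy$, $x\mapsto xy$, $y\mapsto xy$.

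The one genuine point is the $q$-count: a factor $q$ is produced only at an append step and is never touched by a later rule, so the $q$-exponent of $\pi$ equals the number of append steps in its construction; I will show this equals $\rlmin(\pi)$ by observing that a value $m$ is a right-to-left minimum of the final $\pi$ if and only if $m$ was appended at its stage, since then every later-inserted (hence larger) letter lies to its right, whereas if $m$ was wedged in the middle some smaller letter already sat to its right and remains there. Granting this, the labeling yields $D_G^n(I)=I\sum_{\pi\in\msn}x^{\operatorname{\widehat{impasc}}(\pi)}y^{\des^*(\pi)}p^{\operatorname{\widehat{pasc}}(\pi)}q^{\rlmin(\pi)}$, and since $G$ is exactly the grammar of Lemma~\ref{lemmacycle}, comparison gives the asserted equality with $A_n(x,y,p,q)$. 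Setting $p=q=1$ collapses the right side to $\sum_{\pi\in\msn}x^{\exc(\pi)}y^{\drop(\pi)}$, which is symmetric in $x$ and $y$ because $\pi\mapsto\pi^{-1}$ exchanges excedances and drops; hence $(\operatorname{\widehat{impasc}},\des^*)$ is a symmetric pair on $\msn$. I expect the main obstacle to be the stability claims behind the transition rules, namely that inserting the largest letter alters the status (ascent/descent, proper/improper, right-to-left minimality) of no entry except the unique one it is placed next to; this is precisely where the ``insert the maximum'' design does the real work and where a careful case check is required.
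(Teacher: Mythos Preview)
Your proposal is correct and follows essentially the same approach as the paper's proof. The paper introduces precisely the grammatical labeling you describe (with $p,x,y$ marking augmented proper ascents, augmented improper ascents, and descent positions respectively, $I$ at the right end, and $q$ on each right-to-left minimum), lists the small cases, asserts that by induction the grammar $G=\{I\to Ipq,\ p\to xy,\ x\to xy,\ y\to xy\}$ governs the insertion of the new maximum, and then invokes Lemma~\ref{lemmacycle}; your write-up supplies exactly the details the paper leaves as ``routine,'' including the stability of labels under insertion and the identification of the $q$-exponent with $\rlmin(\pi)$ via the append steps.
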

\begin{proof}
Let $\pi\in \msn$. We introduce a grammatical labeling of $\pi$:
\begin{itemize}
  \item [\rm ($L_1$)]If $\pi(i)$ is an augmented proper ascent, then put a superscript label $p$ just before $\pi(i)$;
  \item [\rm ($L_2$)]If $\pi(i)$ is an augmented improper ascent, then put a superscript label $x$ just before $\pi(i)$;
 \item [\rm ($L_3$)]If $\pi(i)$ is a descent, then put a superscript label $y$ right after $\pi(i)$;
 \item [\rm ($L_4$)]Put a superscript label $I$ right after $\pi$ and put a subscript label $q$ right after each right-to-left minimum.
\end{itemize}
When $n=1,2,3$, the labeled elements can be listed as follows:
$0^p1_q^I,~0^p1^p_q2_q^I,~0^x2^y1_q^I$,
$$0^p1^p_q2_q^I\rightarrow \left\{
  \begin{array}{ll}
   0^p1^p_q2^p_q3_q^I, &  \\
   0^p1_q^x3^y2_q^I, &\\
   0^x3^y1_q^p2_q^I;
  \end{array}
\right.~~0^x2^y1_q^I\rightarrow \left\{
  \begin{array}{ll}
   0^x2^y1_q^p3_q^I, &  \\
   0^x2^x3^y1^I_q, &\\
  0^x3^y2^y1^I_q.
  \end{array}
\right.$$
By induction, it is routine to verify that if 
$G=\{I\rightarrow Ipq, p\rightarrow xy, x\rightarrow xy, y\rightarrow xy\}$,
then $$D_G^n(I)=I\sum_{\pi\in\msn}x^{\operatorname{\widehat{impasc}}(\pi)}y^{\des^*(\pi)}p^{\operatorname{\widehat{pasc}}(\pi)}q^{\rlmin(\pi)},$$
where $\des^*(\pi)=\#\{i\in[n-1]: ~\pi(i)>\pi(i+1)\}$. By Lemma~\ref{lemmacycle}, we get the desired result. 
\end{proof}

By taking $x_2=y_2=w$, $x_1=x$ and $y_1=y$ in Theorem~\ref{thm24}, we obtain the following result.
\begin{corollary}We have
\begin{align*}
A_n\left(x,y,w,p+q\right)
&=\sum_{\pi\in\msn}x^{\operatorname{\widehat{impasc}}(\pi)}y^{\des^*(\pi)}w^{\operatorname{\widehat{pasc}}(\pi)}(p+q)^{\rlmin(\pi)}\\
&=\sum_{\pi\in\ms_{n+1}}x^{\operatorname{impdes}(\pi)}y^{\operatorname{impasc}(\pi)}w^{\operatorname{pdes}(\pi)+\operatorname{pasc}(\pi)}p^{\lrmin(\pi)-1}q^{\rlmin(\pi)-1}.
\end{align*}
In the case of $x=w=1$, we get 
$$\sum_{\pi\in\msn}y^{\des^*(\pi)}(p+q)^{\rlmin(\pi)}=\sum_{\pi\in\ms_{n+1}}y^{\operatorname{impasc}(\pi)}p^{\lrmin(\pi)-1}q^{\rlmin(\pi)-1}.$$

\end{corollary}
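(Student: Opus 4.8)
The plan is to derive both equalities of the corollary as direct specializations of results already in hand, so the whole argument reduces to a substitution of variables followed by an elementary simplification; there is no genuinely new content to prove. I would handle the second equality first, as it is the immediate specialization of Theorem~\ref{thm24}.

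First I would set $x_1=x$, $y_1=y$, and $x_2=y_2=w$ in Theorem~\ref{thm24}. On the summation side the two proper-statistic exponents merge, since $w^{\operatorname{pdes}(\pi)}w^{\operatorname{pasc}(\pi)}=w^{\operatorname{pdes}(\pi)+\operatorname{pasc}(\pi)}$, giving exactly the second line of the corollary. On the polynomial side the third argument of $A_n$ collapses,
$$\frac{px_2+qy_2}{p+q}=\frac{pw+qw}{p+q}=w,$$
so the right-hand side becomes $A_n(x,y,w,p+q)$, and we obtain the claimed identity $A_n(x,y,w,p+q)=\sum_{\pi\in\ms_{n+1}}x^{\operatorname{impdes}(\pi)}y^{\operatorname{impasc}(\pi)}w^{\operatorname{pdes}(\pi)+\operatorname{pasc}(\pi)}p^{\lrmin(\pi)-1}q^{\rlmin(\pi)-1}$. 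For the first equality I would invoke the immediately preceding theorem, whose right-hand side $\sum_{\pi}x^{\exc}y^{\drop}p^{\fix}q^{\cyc}$ is $A_n(x,y,p,q)$ by definition; there the parameters marking $\operatorname{\widehat{pasc}}$ and $\rlmin$ (called $p$ and $q$ in that statement) I would specialize to $w$ and $p+q$, turning the polynomial into $A_n(x,y,w,p+q)$ and the sum into the first line of the corollary. The two lines therefore equal the common value $A_n(x,y,w,p+q)$. The special case then drops out by setting $x=w=1$: the first line reduces to $\sum_{\pi\in\msn}y^{\des^*(\pi)}(p+q)^{\rlmin(\pi)}$ and the second to $\sum_{\pi\in\ms_{n+1}}y^{\operatorname{impasc}(\pi)}p^{\lrmin(\pi)-1}q^{\rlmin(\pi)-1}$, and equating them gives the stated consequence.

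The only point needing care, and the nearest thing to an obstacle, is that the substitution $x_2=y_2=w$ is performed inside $A_n\bigl(x_1,y_1,\tfrac{px_2+qy_2}{p+q},p+q\bigr)$, whose third argument is a priori a rational function in $p,q$. I would observe that this expression is in fact a genuine polynomial: expanding by definition gives $\sum_{\pi}x_1^{\exc}y_1^{\drop}(px_2+qy_2)^{\fix}(p+q)^{\cyc-\fix}$, since $\fix(\pi)\le\cyc(\pi)$ forces the factor $(p+q)^{-\fix}$ to cancel against $(p+q)^{\cyc}$. Consequently the specialization is a legitimate polynomial identity and the reduction $\tfrac{pw+qw}{p+q}=w$ is exact rather than merely formal, so no nonvanishing hypothesis on $p+q$ is required. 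With that remark in place, every step is routine and the corollary follows.
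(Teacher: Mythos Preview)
Your proposal is correct and matches the paper's approach exactly: the paper derives the corollary by the same substitution $x_2=y_2=w$, $x_1=x$, $y_1=y$ in Theorem~\ref{thm24}, with the first line coming from the immediately preceding theorem upon renaming its parameters $p\to w$, $q\to p+q$. Your extra remark about the polynomiality of the third argument is a nice clarification but is not needed for the argument to go through.
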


%%%%%%%%%%%%%%%%%%%%%%%%%%%%%%%%%%%%%%%%%%%%%%%%%%%%%%%%%%%%%%%%%%%%%%%%%%
%%%%%%%%%%%%%%%%%%%%%%%%%%%%%%%%%%%%%%%%%%%%
\section{Bivariate ascent-plateau polynomials}\label{section3}
%%%%%%%%%%%%%%%%%%%%%%%%%%%%%%%%%%%%%%%%%%%%%%%%%%%%%%%%%%%%%%%%%%%%%%%%%%%%%%%%%%%%
%\subsection{Notation and preliminaries}
%%%%%%%%%%%%%%%%%%%%%%%%%%%%%%%%%%%%%%%%%%%%%%
%%%%%%%%%%%%%%%%%%%%%%%%%%%%%%%%%%%%%%%%%%%%
\subsection{On the joint distribution of ascent-plateaux and left ascent-plateaux}
%%%%%%%%%%%%%%%%%%%%%%%%%%%%%%%%%%%%%%%%%%%%%%%%%%%%%%%%%%%%%%%%%%%%%%%%%%%%%%%%%%%%
%%\subsection{Notation and preliminaries}
\hspace*{\parindent}

%%%%%%%%%%%%%%%%%%%%%%%%%%%%%%%%%%%%%%%%%%%%%%%
%%%%%%%%%%%%%%%%%%%%%%%%%%%%%%%%%%%%%%%%%%%%%%
%%%%%%%%%%%%%%%%%%%%%%%%%%%%%%%%%%%%%%%%%%%%%%
%%%%%%%%%%%%%%%%%%%%%%%%%%%%%%%%%%%%%%%%%%%%%%
%%%%%%%%%%%%%%%%%%%%%%%%%%%%%%%%%%%%%%%%%%%%%%%%%%%%%%%%%%%%%%%%%%%%%%%%%%%%%%%%%
%%%%%%%%%%%%%%%%%%%%%%%%%%%%%%%%%%%%%%%%%%%%%%%%%
%%%%%%%%%%%%%%%%%%%%%%%%%%%%%%%%%%%%%%%%%%%
%%%%%%%%%%%%%%%%%%%%%%%%%%%%%%%%%%%%%%%%%%%%%%%%%%%%%%%%%%%%%%%%%%%%%%%%%%%%%%%%%

From~\eqref{Ankap}, we see that 
$$\sum_{\sigma\in\mqn}x^{\lap(\sigma)}=\sum_{\sigma\in\mqn}x^{n-\ap(\sigma)}.$$
Consider the bivariate polynomials 
$$P_n(x,y)=\sum_{\sigma\in\mqn}x^{\lap(\sigma)}y^{\ap(\sigma)}.$$
In particular, $P_1(x,y)=x$ and $P_2(x,y)=x+xy+x^2y$.
\begin{lemma}\label{lapap}
Let $G=\{I\rightarrow Jz,~J\rightarrow Jz^2+Ixz,~x\rightarrow 2xz^2,~z\rightarrow xz\}$.
Then we have $$D_G^n(I){\big{|}}_{\substack{I=1,~J=x,\\x=xy,~z=1}}=\sum_{\sigma\in\mqn}x^{\lap(\sigma)}y^{\ap(\sigma)}.$$
\end{lemma}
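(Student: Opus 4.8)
The plan is to set up a grammatical labeling of each Stirling permutation $\sigma\in\mqn$ that simultaneously tracks the ascent-plateaux and the left ascent-plateaux, and to show that inserting the string $mm$ into a permutation of $\mq_{m-1}$ realizes exactly the operator $D_G$ for the grammar $G=\{I\rightarrow Jz,~J\rightarrow Jz^2+Ixz,~x\rightarrow 2xz^2,~z\rightarrow xz\}$. First I would recall the standard fact that every $\sigma'\in\mqn$ is obtained uniquely from some $\sigma\in\mq_{m-1}$ by choosing one of the $2m-1$ ``gaps'' (the $2m-2$ internal positions plus the end, equivalently the $2m-1$ maximal runs of equal consecutive entries read with the convention $\sigma_0=\sigma_{2m}=0$) and inserting $mm$ there. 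The labels must be assigned to these $2m-1$ slots. The key observation is that the new entry $mm$ is always both an ascent-plateau and a left ascent-plateau of $\sigma'$ unless it is inserted at the very front (where $\sigma_0=0<m$ still makes it a left ascent-plateau but, since the position index is $1$, it is \emph{not} an ascent-plateau), and that inserting $mm$ never destroys an existing ascent-plateau or left ascent-plateau but can \emph{create} one from a position that previously was ``neutral.''

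Concretely, I would attach to $\sigma$ a label $I$ at the end, decompose the $2m-1$ slots into three types, and label them so that: the single slot at the far right (after the last entry) which, upon insertion, makes $mm$ a left-ascent-plateau that is \emph{not} an ascent-plateau, gets weight tracked by $I\to Jz$; a slot whose insertion makes $mm$ simultaneously an ascent-plateau and a left ascent-plateau and additionally may convert the slot it splits, gets tracked by the $J$-rule; slots already carrying an ascent-plateau/left-ascent-plateau structure get the $x$-rule; and the remaining ``neutral'' slots between nearest equal elements get the $z$-rule. The bookkeeping amounts to verifying, case by case on whether the split slot is of type $I$, $J$, $x$, or $z$ and whether the insertion is immediately to the left or right of the relevant entry, that the multiset of labels transforms exactly as dictated by $D_G$. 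Then, summing over all insertion points gives $D_G^n(I)$, and the specialization $I=1,\,J=x,\,x=xy,\,z=1$ collapses the label of each $\sigma$ to $x^{\lap(\sigma)}y^{\ap(\sigma)}$: the $J$ factor contributes $x$ (the seed left-ascent-plateau), the $x$ factor contributes $xy$ per position that is both, and the $z$ factor contributes $1$. I would check the base cases $n=1$ (giving $D_G(I)|=x=P_1$) and $n=2$ (giving $x+xy+x^2y=P_2$) against the displayed polynomials $P_1(x,y)=x$, $P_2(x,y)=x+xy+x^2y$, and run one induction step.

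The main obstacle I expect is pinning down the correct split of the $2m-1$ slots into the four label classes and, in particular, correctly accounting for the asymmetry between the front slot (where $mm$ is a left-ascent-plateau but not an ascent-plateau) and all other slots: this is precisely what forces the two distinct seed letters $I$ and $J$ and the inhomogeneous rule $J\to Jz^2+Ixz$ rather than a single-letter grammar as in Lemma~\ref{lemmaap}. A secondary subtlety is confirming that inserting $mm$ into one of the $z$-labeled neutral slots produces the same label transformation as inserting into an $x$-labeled slot (mirroring the remark in the proof of Lemma~\ref{lemmaap}), so that the $z\to xz$ rule—one new $x$-slot and one surviving $z$-slot per neutral slot—is exactly right; here one must be careful that inserting between two equal entries $\cdots c c\cdots$ creates a new left-ascent-plateau at $m$ but leaves the old structure of $c$ intact, whereas inserting elsewhere in the run behaves differently, and these cases must be shown to aggregate to the stated polynomial coefficients. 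Once the labeling is fixed, the induction is routine term-matching, so the write-up would foreground the label definition and the insertion-case table, then invoke induction and the final substitution.
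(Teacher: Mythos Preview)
Your overall strategy---grammatical labeling of Stirling permutations with the insertion of $mm$ realizing $D_G$---is the same as the paper's, and the base-case check is fine.  But the concrete labeling you sketch has the front/back roles reversed, and this is not a cosmetic slip: it breaks the correspondence with the grammar.

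You correctly observe in your first paragraph that inserting $mm$ at the \emph{front} is the unique slot where the new block is a left ascent-plateau but not an ascent-plateau.  In the second paragraph, however, you place $I$ ``at the end'' and describe the far-right slot as the one producing a lap-but-not-ap; that is false.  Appending $mm$ at the right of $\sigma\in\mq_{m-1}$ puts the first $m$ at position $2m-1\geqslant 2$, so it is both a lap and an ap.  With your placement the rule $I\to Jz$ (which under the substitution $J=x,\,z=1$ records exactly one lap and zero ap) cannot hold.  The paper instead puts the seed letters at the \emph{front}: label the permutation by $I$ when $\sigma_1<\sigma_2$ (the front slot contributes neither lap nor ap), and by $J$ when $\sigma_1=\sigma_2$ (the front pair is a lap that is not an ap), with $J$ covering the two gaps flanking $\sigma_1$.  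Then inserting $mm$ at the $I$-front gives $\overbrace{m}^{J}m^{z}\sigma$, i.e.\ $I\to Jz$; inserting at a $J$-front gives either $\overbrace{m}^{J}m^{z}\sigma_1^{z}\sigma_2\cdots$ or $^{I}\sigma_1\overbrace{m}^{x}m^{z}\sigma_2\cdots$, i.e.\ $J\to Jz^2+Ixz$.  Your description of the $J$-rule (``a slot whose insertion makes $mm$ simultaneously an ap and a lap and may convert the slot it splits'') does not match this: the $Jz^2$ branch is precisely another front insertion where $mm$ is again lap-but-not-ap.

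A smaller inaccuracy: inserting $mm$ immediately before an existing ascent-plateau $\sigma_i$ \emph{does} destroy that ascent-plateau (now $m>\sigma_i$), though it simultaneously creates one at $m$; the rule $x\to 2xz^2$ reflects this transfer, not preservation.  Once you move $I$ and $J$ to the front and record the $J$-dichotomy as above, the remaining cases ($x\to 2xz^2$, $z\to xz$) go through exactly as in Lemma~\ref{lemmaap} and the induction is routine.
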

\begin{proof}
Given $\sigma\in\mqn$.  
We label $\sigma_i$ as follows:
\begin{itemize}
  \item [$(i)$] If $\sigma_1<\sigma_2$, we put a superscript label $I$ before $\sigma$. 
  If $\sigma_1=\sigma_2$, we label the two positions just before and right after $\sigma_1$ by a superscript label $J$; 
  \item [$(ii)$] If $\sigma_i$ is a ascent-plateau, then we label the two positions just before and right after $\sigma_i$ by a superscript label $x$, i.e., $\sigma_{i-1}\overbrace{\sigma_{i}}^x\sigma_{i+1}$, where $2\leqslant i\leqslant 2n-1$;
  \item [$(iii)$] Except the above two cases, we attach a superscript label $z$ to each of the other positions.
\end{itemize}
When $n=1,2$, the labeled elements are listed as follows:
$$\overbrace{1}^J1^z,~~\overbrace{2}^J2^z1^z1^z,~~^I1\overbrace{2}^x2^z1^z,~~\overbrace{1}^J1\overbrace{2}^x2^z.$$
Note that $D_G(I)=Jz$ and $D_G^2(I)=Jz(x+z^2)+Ixz^2$. Hence the results hold for $n=1,2$. 
We proceed by induction. Now assume that $m\geqslant 2$ and $\sigma\in\mq_{m-1}$. Suppose that 
$\sigma'$ is Stirling permutation in $\mq_m$ created
from $\sigma$ by inserting the string $mm$.
Then the changes of labeling can be illustrated as follows:
$$^I\sigma\mapsto \overbrace{m}^Jm^z\sigma;\quad\quad~~~\cdots\sigma_i^z\sigma_{i+1}\cdots\mapsto \cdots\sigma_i\overbrace{m}^xm^z\sigma_{i+1};$$
$$\overbrace{\sigma_1}^J\sigma_2\cdots\mapsto  \overbrace{m}^Jm^z\sigma_1^z\sigma_2\cdots~{\text{or}}~^I\sigma_1\overbrace{m}^xm^z\sigma_2\cdots;$$
$$\cdots\overbrace{\sigma_{i}}^x\sigma_{i+1}\cdots\mapsto \cdots \overbrace{m}^xm^z{\sigma_{i}}^z\sigma_{i+1}\cdots~{\text{or}}~\cdots^z{\sigma_{i}}\overbrace{m}^xm^z\sigma_{i+1}\cdots.$$
Summing up all the cases shows that the assertion is valid for $m$ and the insertion of the string $mm$ corresponds to the operator $D_G$.
This completes the proof.
\end{proof}

We now give a unified extension of the bi-$\gamma$-positivity of $M_n(x)$ and $N_n(x)$. 
\begin{theorem}
We have 
$$\sum_{\sigma\in\mq_{n+1}}x^{\lap(\sigma)}y^{\ap(\sigma)}=x\sum_{i\geqslant 0}\xi_{n,i}(xy)^i(1+xy)^{n-2i}+xy\sum_{j\geqslant 0}\eta_{n,j}(xy)^j(1+xy)^{n-1-2j},$$
where the coefficients $\xi_{n,i}$ and $\eta_{n,j}$ satisfy the recurrence system 
 \begin{equation}\label{xieta}
\left\{
  \begin{array}{ll}
   \xi_{n+1,i}=(1+2i)\xi_{n,i}+4(n-2i+2)\xi_{n,i-1}+\eta_{n,j-1}, & \\
\eta_{n+1,j}=(2+2j)\eta_{n,j}+4(n-2j+1)\eta_{n,j-1}+\xi_{n,j}, &
  \end{array}
\right.
\end{equation}
with the initial conditions $\xi_{1,0}=\eta_{1,0}=1$ and $\xi_{1,k}=\eta_{1,k}=0$ for $k>0$.
\end{theorem}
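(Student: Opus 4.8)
The plan is to exploit the grammar $G=\{I\to Jz,\ J\to Jz^2+Ixz,\ x\to 2xz^2,\ z\to xz\}$ of Lemma~\ref{lapap} via a change of grammars. Since $D_G(I)=Jz$, we have $D_G^{n+1}(I)=D_G^{n}(Jz)$, and by Lemma~\ref{lapap} the polynomial $\sum_{\sigma\in\mq_{n+1}}x^{\lap(\sigma)}y^{\ap(\sigma)}$ is the image of $D_G^{n+1}(I)$ under $I=1,\ J=x,\ x=xy,\ z=1$. The first thing I would do is record a parity phenomenon: in every monomial of $D_G^{m}(I)$ the letter $J$ is accompanied by an odd power of $z$ and the letter $I$ by an even power of $z$, a feature preserved by $D_G$. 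This suggests introducing the auxiliary letters $a=Jz$, $u=xz^{2}$ and $v=x+z^{2}$. A direct computation gives $D_G(a)=D_G(Jz)=Jz^{3}+Ixz^{2}+Jxz=av+Iu$, together with $D_G(u)=2uv$, $D_G(v)=4u$ and $D_G(I)=a$; hence $G$ induces the closed grammar $\widetilde G=\{I\to a,\ a\to av+Iu,\ u\to 2uv,\ v\to 4u\}$ on $\mathbb{Q}[I,a,u,v]$, and $D_G^{n+1}(I)=D_{\widetilde G}^{\,n}(a)$.

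Next I would prove by induction on $n\ge 0$ that $D_{\widetilde G}^{\,n}(a)=a\,\Phi_n+Iu\,\Psi_n$, where $\Phi_n=\sum_{i}\xi_{n,i}u^{i}v^{n-2i}$ and $\Psi_n=\sum_{j}\eta_{n,j}u^{j}v^{n-1-2j}$, with base case $\Phi_0=1$, $\Psi_0=0$; applying $D_{\widetilde G}$ once recovers the initial values $\xi_{1,0}=\eta_{1,0}=1$ of the statement. Using $D_{\widetilde G}(u^{i}v^{k})=2iu^{i}v^{k+1}+4ku^{i+1}v^{k-1}$ and the product rule, the identity $D_{\widetilde G}^{\,n+1}(a)=D_{\widetilde G}(a\Phi_n)+D_{\widetilde G}(Iu\Psi_n)$ unwinds to $\Phi_{n+1}=v\Phi_n+D_{\widetilde G}(\Phi_n)+u\Psi_n$ and $\Psi_{n+1}=\Phi_n+2v\Psi_n+D_{\widetilde G}(\Psi_n)$. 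Extracting the coefficient of $u^{i}v^{(n+1)-2i}$ from the first relation gives exactly $\xi_{n+1,i}=(1+2i)\xi_{n,i}+4(n-2i+2)\xi_{n,i-1}+\eta_{n,i-1}$, and extracting the coefficient of $u^{j}v^{n-2j}$ from the second gives $\eta_{n+1,j}=(2+2j)\eta_{n,j}+4(n-2j+1)\eta_{n,j-1}+\xi_{n,j}$, i.e.\ the system \eqref{xieta}. Finally, the substitution $I=1,\ J=x,\ x=xy,\ z=1$ sends $a=Jz\mapsto x$, $u\mapsto xy$ and $v\mapsto 1+xy$, so $D_{\widetilde G}^{\,n}(a)$ maps to $x\sum_i\xi_{n,i}(xy)^{i}(1+xy)^{n-2i}+xy\sum_j\eta_{n,j}(xy)^{j}(1+xy)^{n-1-2j}$; combined with Lemma~\ref{lapap} this is the asserted identity.

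The hard part is the first step — recognizing that the parity bookkeeping collapses the four-letter grammar $G$ onto the four-letter grammar $\widetilde G$ closed on $\{I,a,u,v\}$, i.e.\ guessing the correct auxiliary variables $u=xz^{2}$ and $v=x+z^{2}$. Once $\widetilde G$ is in hand, both the inductive normal form for $D_{\widetilde G}^{\,n}(a)$ and the verification of \eqref{xieta} are routine coefficient comparisons. The only other points needing a little care are the base case and the out-of-range terms (negative powers of $v$, or $i,j$ outside the summation range), which vanish and do not disturb the recursions.
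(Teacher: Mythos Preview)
Your proposal is correct and follows essentially the same route as the paper: the paper also sets $u=xz^{2}$, $v=x+z^{2}$, observes $D_G(u)=2uv$, $D_G(v)=4u$, $D_G(Jz)=Jz\,v+Iu$, writes $D_G^{n}(Jz)=Jz\sum_i\xi_{n,i}u^iv^{n-2i}+Iu\sum_j\eta_{n,j}u^jv^{n-1-2j}$, and extracts \eqref{xieta} by the same coefficient comparison before making the substitution $I\to1,\ J\to x,\ x\to xy,\ z\to1$. The only cosmetic differences are that you name $a=Jz$ and package the induced rules as a separate grammar $\widetilde G$, and that you start the induction at $n=0$ (recovering $\xi_{1,0}=\eta_{1,0}=1$ after one step) rather than directly at $n=1$.
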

\begin{proof}
Let $G=\{I\rightarrow Jz,~J\rightarrow Jz^2+Ixz,~x\rightarrow 2xz^2,~z\rightarrow xz\}$. Note that 
$D_G(I)=Jz,~D_G^2(I)=Jz(x+z^2)+Ixz^2$. Set $u=xz^2$ and $v=x+z^2$. We find that $D_G(u)=2uv$ and $D_G(v)=4u$. So we get
$D_G(I)=Jz,~D_G(Jz)=Jzv+Iu,~D_G^2(Jz)=Jz(v^2+5u)+Iu(3v)$.
Assume that 
\begin{equation}\label{DGIJ}
D_G^{n+1}(I)=D_G^{n}(Jz)=Jz\sum_{i\geqslant 0}\xi_{n,i}u^iv^{n-2i}+Iu\sum_{j\geqslant 0}\eta_{n,j}u^jv^{n-1-2j}.
\end{equation}
We proceed by induction. Note that 
\begin{align*}
D_G^{n+1}(I)&=D_G\left(Jz\sum_{i\geqslant 0}\xi_{n,i}u^iv^{n-2i}+Iu\sum_{j\geqslant 0}\eta_{n,j}u^jv^{n-1-2j}\right)\\
&=(Jzv+Iu)\left(\sum_{i\geqslant 0}\xi_{n,i}u^iv^{n-2i}\right)+Jz\sum_{i}\xi_{n,i}\left(2iu^{i}v^{n+1-2i}+4(n-2i)u^{i+1}v^{n-1-2i}\right)+\\
&(Jzu+2Iuv)\left(\sum_{j\geqslant 0}\eta_{n,j}u^jv^{n-1-2j}\right)+Iu\sum_{j}\eta_{n,j}\left(2ju^jv^{n-2j}+4(n-1-2j)u^{j+1}v^{n-2-2j}\right).
\end{align*}
Comparing the coefficients, we get~\eqref{xieta}.
By Lemma~\ref{lapap}, substituting $J\rightarrow x,I\rightarrow 1,~z\rightarrow 1$ and $x\rightarrow xy$, then $u=xy$ and $v=1+xy$, which yields  
the desired symmetric decomposition.
\end{proof}

Combining~\eqref{DGIJ} and Lemma~\ref{lapap}, we get the following.
\begin{corollary}
We have 
$$\sum_{\substack{\sigma\in\mq_{n+1}\\\sigma_1=\sigma_2}}y^{\ap(\sigma)}=\sum_{i\geqslant 0}\xi_{n,i}y^i(1+y)^{n-2i},~\sum_{\substack{\sigma\in\mq_{n+1}\\\sigma_1<\sigma_2}}y^{\ap(\sigma)}=y\sum_{j\geqslant 0}\eta_{n,j}y^{j}(1+y)^{n-1-2j}.$$
\end{corollary}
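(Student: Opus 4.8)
The plan is to sharpen the proof of the preceding theorem by separating, inside the grammatical labeling of Lemma~\ref{lapap}, the Stirling permutations carrying the letter $J$ from those carrying the letter $I$. Recall that there each $\sigma\in\mq_{n+1}$ receives a weight-monomial $w(\sigma)$, and by rule $(i)$ of that labeling $w(\sigma)$ contains $J$ exactly when $\sigma_1=\sigma_2$ and contains $I$ exactly when $\sigma_1<\sigma_2$; for a Stirling permutation $\sigma_1\leqslant\sigma_2$ always, so these exhaust all cases. Since $D_G(I)=Jz$, $D_G(J)=Jz^2+Ixz$, and the rules for $x,z$ introduce neither $I$ nor $J$, the derivation $D_G$ preserves the total degree in $\{I,J\}$, so every monomial of $D_G^{m}(I)$ carries exactly one of $I,J$. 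Hence the two summands of \eqref{DGIJ}, namely $Jz\sum_{i}\xi_{n,i}u^{i}v^{n-2i}$ and $Iu\sum_{j}\eta_{n,j}u^{j}v^{n-1-2j}$ with $u=xz^{2}$ and $v=x+z^{2}$, are precisely the $J$-part and the $I$-part of $D_G^{n+1}(I)=D_G^{n}(Jz)$; by Lemma~\ref{lapap} they therefore enumerate $w(\sigma)$ over those $\sigma\in\mq_{n+1}$ with $\sigma_1=\sigma_2$ and with $\sigma_1<\sigma_2$, respectively.

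Next I would impose the evaluation $I=1$, $J=x$, $x=xy$, $z=1$ of Lemma~\ref{lapap} on each part, under which $u\mapsto xy$, $v\mapsto 1+xy$, $Jz\mapsto x$ and $Iu\mapsto xy$. Using $\lap(\sigma)=\ap(\sigma)+1$ when $\sigma_1=\sigma_2$ and $\lap(\sigma)=\ap(\sigma)$ when $\sigma_1<\sigma_2$, the $J$-part gives
$$x\sum_{\substack{\sigma\in\mq_{n+1}\\\sigma_1=\sigma_2}}(xy)^{\ap(\sigma)}=x\sum_{i\geqslant 0}\xi_{n,i}(xy)^{i}(1+xy)^{n-2i},$$
while the $I$-part gives
$$\sum_{\substack{\sigma\in\mq_{n+1}\\\sigma_1<\sigma_2}}(xy)^{\ap(\sigma)}=xy\sum_{j\geqslant 0}\eta_{n,j}(xy)^{j}(1+xy)^{n-1-2j}.$$
Both are identities of polynomials in $x,y$; cancelling $x$ in the first and then setting $x=1$ in both (equivalently, reading each side as a polynomial in the single quantity $xy$ and renaming $xy$ to $y$) yields the two asserted formulas.

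The only point needing real care is the bookkeeping in the first paragraph: one must check that the algebraic splitting of $D_G^{n+1}(I)$ furnished by \eqref{DGIJ} into an $I$-part and a $J$-part coincides term by term with the combinatorial splitting of $\sum_{\sigma\in\mq_{n+1}}w(\sigma)$ according to whether $\sigma_1<\sigma_2$ or $\sigma_1=\sigma_2$. This is exactly what the two observations above provide --- that labeling rule $(i)$ forces precisely one of $I,J$ into each $w(\sigma)$ with the stated dichotomy, and that $G$ keeps the $\{I,J\}$-degree equal to $1$ --- so once both are recorded, the rest is a mechanical substitution requiring no combinatorial input beyond the proof of Lemma~\ref{lapap}.
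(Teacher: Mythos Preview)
Your proof is correct and follows exactly the approach the paper intends: the paper's own justification for this corollary is the single sentence ``Combining~\eqref{DGIJ} and Lemma~\ref{lapap}, we get the following,'' and your argument spells out precisely how that combination works, including the key observation that the $I$/$J$ dichotomy in the labeling matches the $\sigma_1<\sigma_2$ versus $\sigma_1=\sigma_2$ dichotomy. Your careful check that the grammar preserves total $\{I,J\}$-degree, and your handling of the substitution and the relation $\lap(\sigma)=\ap(\sigma)+\mathbf{1}_{\sigma_1=\sigma_2}$, are exactly the details the paper leaves implicit.
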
 

Define $\xi_n(x)=\sum_{i\geqslant 0}\xi_{n,i}x^i$ and  $\eta_n(x)=\sum_{j\geqslant 0}\eta_{n,j}x^j$. It follows from~\eqref{xieta} that 
 \begin{equation*}
\left\{
  \begin{array}{ll}
   \xi_{n+1}(x)=(1+4nx)\xi_n(x)+2x(1-4x)\frac{\mathrm{d}}{\mathrm{d}x}\xi_n(x)+x\eta_n(x), & \\
\eta_{n+1}(x)=(2+4nx-4x)\eta_{n}(x)+2x(1-4x)\frac{\mathrm{d}}{\mathrm{d}x}\eta_n(x)+\xi_n(x), &
  \end{array}
\right.
\end{equation*}
with $\xi_1(x)=\eta_1(x)=1,~\xi_2(x)=1+5x,~\xi_3(x)=1+26x,~\eta_2(x)=3,~\eta_3(x)=7+17x$.
Set $F_n(x)=\xi_n(x^2)+x\eta_n(x^2)$. Using~\cite[Theorem~3]{Ma22}, we arrive at
$$F_{n+1}(x)=(1+x+4nx^2)F_n(x)+x(1-4x^2)\frac{\mathrm{d}}{\mathrm{d}x}F_n(x),$$
with the initial conditions $F_0(x)=1$ and $F_1(x)=1+x$.
%%%%%%%%%%%%%%%%%%%%%%%%%%%%%%%%%%%%%%%%%%%%%%
%%%%%%%%%%%%%%%%%%%%%%%%%%%%%%%%%%%%%%%%%%%%%%
%%%%%%%%%%%%%%%%%%%%%%%%%%%%%%%%%%%%%%%%%%%%%%
%%%%%%%%%%%%%%%%%%%%%%%%%%%%%%%%%%%%%%%%%%%%%%%%%%%%%%%%%%%%%%%%%%%%%%%%%%%%%%%%%
%%%%%%%%%%%%%%%%%%%%%%%%%%%%%%%%%%%%%%%%%%%%%%%%%%%%%%%%%%%%%%%%%%%%%%%%%%%%%%%%%%%%
\subsection{Euler-Stirling statistics on Stirling permutations}
%%%%%%%%%%%%%%%%%%%%%%%%%%%%%%%%%%%%%%%%%%%%%%
%%%%%%%%%%%%%%%%%%%%%%%%%%%%%%%%%%%%%%%%%%%%%%
\hspace*{\parindent}

Let $\left(a_n(x),b_n(x)\right)$ be the symmetric decomposition of $M_n(x)$. Following~\cite[Proposition~3.15]{Ma24}, we see that
\begin{equation}\label{Stirling-recu4}
\left\{
  \begin{array}{ll}
a_{n+1}(x)&=(1+x+2(n-1)x)a_{n}(x)+2x(1-x)\frac{\mathrm{d}}{\mathrm{d}x}a_{n}(x)+xb_{n}(x),\\
b_{n+1}(x)&=2(1+(n-1)x)b_{n}(x)+2x(1-x)\frac{\mathrm{d}}{\mathrm{d}x}b_{n}(x)+a_{n}(x),
  \end{array}
\right.
\end{equation}
with $a_{1}(x)=1$ and $b_{1}(x)=0$. 
In this following, we consider the {\it $q$-ascent-plateau polynomials}
$$M_n(x,q)=\sum_{\sigma\in\mqn}x^{\ap(\sigma)}q^{\lrmin(\sigma)}.$$
For instance, $M_1(x,q)=q,~M_2(x,q)=q^2+2qx$.

For $\sigma\in\mqn^{(k)}$, we say that an value $\sigma_i$ is 
a {\it shortest ascent-plateau} if $\sigma_{i-1}<\sigma_{i}=\sigma_{i+1}$, where $2\leqslant i\leqslant nk-1$.
Let $\operatorname{ap}_2(\sigma)$ denote the number of shortest ascent-plateaux of $\sigma$. When $\sigma\in\mqn$, we have 
$\operatorname{ap}_2(\sigma)=\ap(\sigma)$.
\begin{lemma}\label{lemma1}
Let $G=\{I\rightarrow qIy^k,~x\rightarrow 2xy^k,~y\rightarrow xy^{k-1}\}$. Then 
$$D_G^n(I)=I\sum_{\sigma\in\mqn^{(k)}}x^{\operatorname{ap}_2(\sigma)}q^{\lrmin(\sigma)}y^{kn-2\operatorname{ap}_2(\sigma)}.$$
\end{lemma}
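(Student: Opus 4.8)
The plan is to adapt the grammatical-labeling argument used in the proof of Lemma~\ref{lemmaap} and Lemma~\ref{lapap}. I would fix $\sigma\in\mqn^{(k)}$, attach a label $I$ at the \emph{front} of $\sigma$ (so that the $\lrmin$ count is naturally incremented each time the word $mm\cdots m$ for a new smallest-so-far value is placed at the left end), and then label each position of $\sigma$ as follows: if $\sigma_i$ is a shortest ascent-plateau, i.e. $\sigma_{i-1}<\sigma_i=\sigma_{i+1}$, attach a superscript label $x$ to the two positions flanking the block $\sigma_i\sigma_{i+1}$ (there are $2\operatorname{ap}_2(\sigma)$ such positions); attach a label $y$ to each of the remaining $kn-2\operatorname{ap}_2(\sigma)$ positions. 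The weight of $\sigma$ is the product of its labels, namely $I\,x^{\operatorname{ap}_2(\sigma)}q^{\lrmin(\sigma)-?}y^{kn-2\operatorname{ap}_2(\sigma)}$; I would need to check the bookkeeping of the $q$-exponent carefully — in the statement it is $\lrmin(\sigma)$ exactly, so the label $I$ must itself carry the first left-to-right minimum, which matches $D_G(I)=qIy^k$ producing a factor $q$ on the very first insertion. I would verify the base case $n=1$ by hand: $\sigma=1^k$ has $\operatorname{ap}_2=0$, $\lrmin=1$, and indeed $D_G(I)=qIy^k$.

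For the inductive step I would take $\sigma\in\mqn^{(k)}$ with its labeling and form each $\sigma'\in\mq_{n+1}^{(k)}$ by inserting the string $\underbrace{(n{+}1)(n{+}1)\cdots(n{+}1)}_{k}$ into some position of $\sigma$ (equivalently, into one of the labeled slots). The key is to check that the change of labels under each such insertion is recorded exactly by one term of the grammar $G=\{I\rightarrow qIy^k,\ x\rightarrow 2xy^k,\ y\rightarrow xy^{k-1}\}$. Inserting the new block of $k$ copies of $n+1$ at the extreme left turns $I$ into a fresh left-to-right minimum (the factor $q$) followed by $k$ new internal slots of the block, which receive $y$ labels, and the old $I$ slot is consumed — this is the production $I\to qIy^k$. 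Inserting the block immediately to the left or to the right of an existing shortest ascent-plateau block replaces that $x$-slot: the new block $(n{+}1)^k$ is itself a shortest ascent-plateau when $k\ge 2$ (or any $k$, since $(n{+}1)(n{+}1)$ begins it), so two new $x$-slots and $k$ internal $y$-slots are created while the interior slot adjacent to the old plateau becomes a $y$; counting gives $x\to 2xy^k$ (the coefficient $2$ being the two sides of the existing plateau). Inserting the block into a $y$-slot creates two $x$-slots (flanking the new plateau) but consumes that $y$ and produces $k-1$ further internal $y$-slots, i.e. $y\to xy^{k-1}$ — wait, that should be $y \to x^2 y^{k-1}$ if two $x$'s appear, so here the precise accounting of how the single consumed $y$-slot splits is the delicate point and must be matched against $y\to xy^{k-1}$ exactly; I would resolve this by carefully tracking that insertion into a $y$-slot makes \emph{one} new shortest-ascent-plateau block contributing the $x$ and that the slot on one side of the block inherits a label already present, so that the net change of exponents is $+1$ in $x$ and $k-1$ in $y$.

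Having established $D_G^n(I)=I\sum_{\sigma\in\mqn^{(k)}}x^{\operatorname{ap}_2(\sigma)}q^{\lrmin(\sigma)}y^{kn-2\operatorname{ap}_2(\sigma)}$ by induction, the proof is complete; the statement of Lemma~\ref{lemma1} follows directly. I expect the \emph{main obstacle} to be the precise slot-counting in the insertion analysis — in particular verifying, for general $k$, that inserting the new $k$-block next to an existing shortest ascent-plateau yields the production $x\to 2xy^k$ rather than something with a different $y$-power, and that insertion into a $y$-slot yields exactly $y\to xy^{k-1}$; these hinge on the observation that within the inserted block $(n{+}1)^k$ only the leftmost position $\sigma_i=\sigma_{i+1}$ with $\sigma_{i-1}<\sigma_i$ counts as a shortest ascent-plateau (the definition requires $\sigma_{i-1}<\sigma_i=\sigma_{i+1}$, so later equalities inside the block do \emph{not} each generate a new $\operatorname{ap}_2$), and on the Stirling condition guaranteeing that inserting a maximal new value never breaks an existing plateau. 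Once the combinatorial insertion rule is pinned down, the correspondence with the three-rule grammar is immediate.
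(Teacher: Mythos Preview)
Your approach is the same as the paper's: a grammatical labeling of $\sigma\in\mqn^{(k)}$ with $I$ at the front, a single $x$-label covering the two slots flanking each shortest ascent-plateau, and $y$ on the remaining $kn-2\operatorname{ap}_2(\sigma)$ slots, followed by induction via insertion of the block $m^k=(n{+}1)^k$.

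There is, however, a genuine error in your reasoning for the rule $I\to qIy^k$. You write that $\lrmin$ is ``incremented each time the word $mm\cdots m$ for a new smallest-so-far value is placed at the left end.'' But in the insertion process $m=n+1$ is always the \emph{largest} value so far, never the smallest. The correct reason front insertion contributes a factor $q$ is that position $1$ is a left-to-right minimum by definition (the clause ``or $i=1$''), so the newly placed $m$ is automatically a fresh LR-min, while every old LR-min survives because $m$ exceeds all existing entries. Equally important---and not addressed in your proposal---is the converse check: inserting $m^k$ at any \emph{non}-front slot leaves $\lrmin$ unchanged, since $m$ cannot be a LR-min at position $>1$ and cannot destroy an existing one. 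This is precisely why the rules $x\to 2xy^k$ and $y\to xy^{k-1}$ carry no $q$.

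Your hesitation about $y\to xy^{k-1}$ versus $y\to x^2y^{k-1}$ dissolves once you recall that a single $x$ label stands for \emph{two} slots (before and after the plateau element $\sigma_i$); thus one new shortest ascent-plateau contributes exactly one $x$ to the weight, and your final accounting ``$+1$ in $x$ and $+(k-1)$ in $y$'' is right. Likewise for $x\to 2xy^k$: inserting $m^k$ on either side of an existing plateau $\sigma_i$ destroys that plateau (now $m>\sigma_i$ precedes it) but creates a new one at the first $m$, adds $k-1$ internal $y$-slots inside $m^k$, and converts the old second $x$-slot (after $\sigma_i$) into a $y$---net $x\mapsto xy^k$ for each of the two sides. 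With the $q$-bookkeeping corrected, your induction is exactly the paper's.
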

\begin{proof}
Given $\sigma\in\mqn^{(k)}$. 
We first attach a subscript label $q$ to each left-to-right minimum. Then the
label scheme is given by the following procedure:
\begin{itemize}
  \item [$(i)$] Put a label $I$ at the front of $\sigma$; 
  \item [$(ii)$] If $\sigma_i$ is a shortest ascent-plateau, then we label the two positions just before and right after $\sigma_i$ by a superscript label $x$, i.e., $\sigma_{i-1}\overbrace{\sigma_{i}}^x\sigma_{i+1}\cdots\sigma_{i+k-1}$, where $2\leqslant i\leqslant nk-k+1$;
  \item [$(iii)$] Except the above two cases, we attach a superscript label $y$ to each of the other positions.
\end{itemize}
Note that the weight of $\sigma$ is given by $w(\sigma)=Ix^{\operatorname{ap}_2(\sigma)}q^{\lrmin(\sigma)}y^{kn-2\operatorname{ap}_2(\sigma)}$.
For example,
$$^I5_q^y5^y5^y1^y_q1^y2^y2\overbrace{3}^x3^y3^y2^y1^y4\overbrace{6}^x6^y6^y4^y4^y.$$
Note that 
$\mq_1^{(k)}=\left\{^I1_q^y1^y\cdots 1^y\right\}$. 
So the weight of $11\cdots1$ is given by $D_G(I)$. We proceed by induction. Now assume that $m\geqslant 2$ and $\sigma\in\mq_{m-1}^{(k)}$. Suppose that 
$\sigma'$ is Stirling permutation in $\mq_m^{(k)}$ created
from $\sigma$ by inserting the string $mm\cdots m$.
Then the changes of labeling can be illustrated as follows:
$$^I\sigma\mapsto ~^Im_q^ym^y\cdots m^y\sigma; \quad\quad~~~~\cdots\sigma_i^y\sigma_{i+1}\cdots\mapsto \cdots\cdots\sigma_i \overbrace{m}^xm^ym^y\cdots m^y\sigma_{i+1}\cdots;$$
$$\cdots\overbrace{\sigma_{i}}^x\cdots\mapsto \cdots\overbrace{m}^xm^ym^y\cdots m^y\sigma_i^y\cdots~{\text{or}}~\cdots^y\sigma_i\overbrace{m}^xm^ym^y\cdots m^y\cdots.$$
Summing up all the cases shows that the assertion is valid for $m$ and the insertion of the string $mm\cdots m$ corresponds to the operator $D_G$.
This completes the proof.
\end{proof}

We can now conclude the following result. The $q=1$ case was first established in~\cite{Ma24} and then investigated by Yan-Yang-Lin~\cite{Yan26}.
\begin{theorem}\label{thm2}
For $n\geqslant 2$, the polynomials $M_n(x,q)$ are bi-$\gamma$-positive if $0<q<2$.
\end{theorem}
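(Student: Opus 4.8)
The plan is to establish bi-$\gamma$-positivity by producing an explicit recursion for the symmetric decomposition $(a_n(x,q),b_n(x,q))$ of $M_n(x,q)$ and then tracking the $\gamma$-coefficients through that recursion, exploiting the grammar of Lemma~\ref{lemma1} with $k=2$. First I would use Lemma~\ref{lemma1} with $k=2$, so that $G=\{I\rightarrow qIy^2,~x\rightarrow 2xy^2,~y\rightarrow xy\}$ and $D_G^n(I)=I\sum_{\sigma\in\mqn}x^{\ap(\sigma)}q^{\lrmin(\sigma)}y^{2n-2\ap(\sigma)}$; setting $y=1$ recovers $M_n(x,q)=D_G^n(I)|_{y=1}/I$. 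Introducing $u=xy^2$ and $v=x+y^2$ (the same substitution used in the preceding theorem), one computes $D_G(u)=2uv$, $D_G(v)=4u$, and $D_G(I)=qIy^2$, which in the new variables becomes a manageable grammar. Since $\ap(\sigma)$ ranges over $0,\dots,\lfloor n/2\rfloor$ and $M_n(x,q)$ (for fixed power of $q$) is what needs decomposing, I would argue by induction that
$$D_G^n(I)=I\sum_{i\ge 0}\left(p_{n,i}(q)\,y^2+r_{n,i}(q)\,u\right)u^i v^{\,?}$$
for suitable nonnegative (in the relevant $q$-range) coefficient polynomials, mirroring the pattern $D_G^2(I)=I(q^2y^2+2qu)$, $D_G^3(I)=I(q^3y^2v+2q^2 uv+\dots)$ that one reads off directly.

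Next I would translate this into the symmetric-decomposition language. Writing $M_n(x,q)=a_n(x,q)+x\,b_n(x,q)$ with $a_n,b_n$ symmetric, the grammar computation yields a coupled recursion of the shape
\begin{equation*}
\left\{
\begin{array}{ll}
a_{n+1}(x,q)&=\bigl(q+qx+2(n-1)x\bigr)a_n(x,q)+2x(1-x)\dfrac{\partial}{\partial x}a_n(x,q)+x\,b_n(x,q),\\[4pt]
b_{n+1}(x,q)&=2\bigl(q+(n-1)x\bigr)b_n(x,q)+2x(1-x)\dfrac{\partial}{\partial x}b_n(x,q)+a_n(x,q),
\end{array}
\right.
\end{equation*}
which specializes at $q=1$ to~\eqref{Stirling-recu4}; the initial data are $a_1=q$, $b_1=0$ (so the base case $n\ge 2$, where $b_2(x,q)=q$, $a_2(x,q)=q^2+2qx-q$ or the correct analogue, must be checked by hand). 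Expanding $a_n(x,q)=\sum_k \alpha_{n,k}(q)\,x^k(1+x)^{\deg-2k}$ and $b_n(x,q)=\sum_k \beta_{n,k}(q)\,x^k(1+x)^{\deg-2k}$ and pushing the recursion through the $\gamma$-basis — using that $x(1+x)^{m}$, $x^2(1+x)^{m-2}$ and the derivative $2x(1-x)\frac{d}{dx}$ all act triangularly on this basis — produces linear recursions for $\alpha_{n,k}(q)$ and $\beta_{n,k}(q)$ with coefficients that are (affine) polynomials in $q$ and in $n$. The whole point is then to show these coefficient polynomials have nonnegative values for $0<q<2$.

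The main obstacle I anticipate is precisely the positivity of the coefficients appearing in the $\gamma$-recursion over the open interval $0<q<2$: unlike the $q=1$ case of~\cite{Ma24}, the factors multiplying $\alpha_{n,k-1}(q)$, $\beta_{n,k-1}(q)$ etc.\ will involve expressions such as $q$, $2q$, $2-q$, $n-q$, $2(n-1)$, and one must verify each stays nonnegative (and that the cross-terms $x b_n$ feeding into $a_{n+1}$ and $a_n$ feeding into $b_{n+1}$ do not destroy positivity). I expect the constraint $q<2$ to enter exactly through a factor like $(2-q)$ or $4x(q-1)$-type term after completing a square, analogous to how the bi-$\gamma$-positivity of $A_n^{(k)}(x)$ in Theorem~\ref{ANK-bigamma} and the operator $2x(1-4x)\frac{d}{dx}$ appearing earlier force a range restriction; the inductive step then goes through verbatim once the base case $n=2$ is confirmed and the sign of every structural constant is pinned down. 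A cleaner alternative, which I would pursue if the direct bookkeeping gets unwieldy, is to find a single grammar in variables $\{I,u,v\}$ (or $\{I,x,z\}$ with $z^2=\dots$) whose iterated derivative manifestly lies in $\mathbb{Z}_{\ge0}[q,2-q][u,v]$ in the $\gamma$-form, reducing the whole theorem to one positivity statement about that grammar's structure constants.
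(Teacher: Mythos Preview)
Your approach is essentially the paper's: the same grammar from Lemma~\ref{lemma1} with $k=2$, the same substitution $u=xy^2$, $v=x+y^2$, and the same plan of splitting into two pieces whose $\gamma$-coefficients satisfy a coupled recursion with structure constants nonnegative for $0<q\leqslant 2$. In fact the paper carries out precisely your ``cleaner alternative'': it sets $A=Iy^2$, obtains the grammar $H=\{A\to A(qv+(2-q)x),\,x\to 2u,\,u\to 2uv,\,v\to 4u\}$, writes $D_H^n(A)=Af_n(u,v)+Axg_n(u,v)$, and reads off $\gamma$-recursions lying in $\mathbb{Z}_{\geqslant 0}[q,\,2-q]$; note that in your displayed $(a_n,b_n)$ recursion the cross-terms should carry factors $q$ and $2-q$ (not $1$ and $1$), and correspondingly $b_2(x,q)=q(2-q)$ rather than $q$.
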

\begin{proof}
Let $D_G$ be the formal derivative with respect to $G=\{I\rightarrow qIy^2,~x\rightarrow 2xy^2,~y\rightarrow xy\}$.
Then $D_G(I)=qIy^2$, $D_G(x)=2xy^2$ and $D_G(y)=xy$.
By Lemma~\ref{lemma1}, we see that $$D_G^n(I)=I\sum_{\sigma\in\mqn}x^{\ap(\sigma)}q^{\lrmin(\sigma)}y^{2n-2\ap(\sigma)}.$$
Note that 
$D_G^2(I)=q^2Iy^{4}+2qIxy^{2}=q^2Iy^2(x+y^2)+qIxy^2(2-q)$.
Setting $A=Iy^2$ and $v=x+y^2$, we obtain  
$$D_G(I)=qA,~D_G(A)=qIy^2(x+y^2)+Ixy^2(2-q)=qAv+(2-q)Ax=A\left(qv+(2-q)x\right).$$
Furthermore, setting $u=xy^2$, we have $x^2=xv-u$.
So we have
\begin{align*}
D_G^2(A)&=A\left(qv+(2-q)x\right)^2+A(4qxy^2+2(2-q)xy^2)\\
&=A(q^2v^2+4qu)+A((2-q)^2x^2+2q(2-q)xv+2(2-q)u)\\
&=A(q^2v^2+4qu)+A((2-q)^2(xv-u)+2q(2-q)xv+2(2-q)u)\\
&=A(q^2v^2+4qu)+A((2q-q^2)u+(2-q)(1+2q)xv)\\
&=A(q^2v^2+(6q-q^2)u)+Ax((2-q)(2+q)v).
\end{align*}
Note that $D_G(u)=2uv$ and $D_G(v)=4u$. We get a new grammar $$H=\{A\rightarrow A\left(qv+(2-q)x\right), x\rightarrow 2u,~u\rightarrow 2uv,v\rightarrow4u\}.$$
Suppose that 
$D_H^n(A)=Af_n(u,v)+Axg_n(u,v)$.
Then we get 
\begin{align*}
D_H^{n+1}(A)&=A(qv+(2-q)x)(f_n(u,v)+xg_n(u,v))+\\
&A(D_H(f_n(u,v))+2ug_n(u,v)+xD_H(g_n(u,v))\\
&=A(qvf_n(u,v)+D_H(f_n(u,v))+2ug_n(u,v))+\\
&A((2-q)xf_n(u,v)+qvxg_n(u,v)+(2-q)x^2g_n(u,v)+xD_H(g_n(u,v))).
\end{align*}
Since $x^2=xv-u$, so $(2-q)x^2g_n(u,v)=(2-q)xvg_n(u,v)-(2-q)ug_n(u,v)$.
We arrive at 
\begin{align*}
D_H^{n+1}(A)&=A(qvf_n(u,v)+D_H(f_n(u,v))+qug_n(u,v))+\\
&Ax((2-q)f_n(u,v)+2vg_n(u,v)+D_H(g_n(u,v)).
\end{align*}
Therefore, we get 
\begin{equation}\label{recusystem}
\left\{
  \begin{array}{ll}
 f_{n+1}(u,v)&=qvf_n(u,v)+D_H(f_n(u,v))+qug_n(u,v),\\
 g_{n+1}(u,v)&=(2-q)f_n(u,v)+2vg_n(u,v)+D_H(g_n(u,v)),
  \end{array}
\right.
\end{equation}
with the initial conditions $f_1(u,v)=qv$ and $g_1(u,v)=2-q$.
When $0<q\leqslant 2$, suppose that there exist nonnegative real numbers $f_{n,i}(q)$ and $g_{n,j}(q)$ such that 
$$f_n(u,v)=\sum_{i\geqslant 0}f_{n,i}(q)u^iv^{n-2i},~g_n(u,v)=\sum_{j\geqslant 0}g_{n,j}(q)u^jv^{n-1-2j}.$$
It follows from~\eqref{recusystem} that 
\begin{equation}\label{recusystem02}
\left\{
  \begin{array}{ll}
 f_{n+1,i}(q)&=(q+2i)f_{n,i}(q)+4(n-2i+2)f_{n,i-1}(q)+qg_{n,i-1}(q);\\
 g_{n+1,i}(q)&=2(1+i)g_{n,i}(q)+4(n-2i+1)g_{n,i-1}(q)+(2-q)f_{n,i}(q),
  \end{array}
\right.
\end{equation}
with $f_{1,0}(q)=q$, $g_{1,0}(q)=2-q$ and $f_{1,i}(q)=g_{1,i}(q)=0$ if $i>0$. 
Clearly, both $f_{n,i}(q)$ and $g_{n,j}(q)$ are nonnegative integers when $0<q\leqslant 2$.
In conclusion, we get
\begin{equation}\label{DGA2}
D_G^n(A)=A\sum_{i\geqslant 0}f_{n,i}(q)u^iv^{n-2i}+Ax\sum_{j\geqslant 0}g_{n,j}(q)u^jv^{n-1-2j}.
\end{equation}
By taking $I=y=1$, we get $M_n(x,q)=D_G^n(I)|_{I=y=1}=qD_H^{n-1}(A)|_{A=1,u=x,v=1+x}$. So we get 
$$M_n(x,q)=\sum_{i\geqslant 0}qf_{n-1,i}(q)x^i(1+x)^{n-1-2i}+x\sum_{j\geqslant 0}qg_{n-1,j}(q)x^j(1+x)^{n-2-2j},$$
as desired. So we finish the proof.
\end{proof}
For example, we have 
$$M_3(x,q)=q^3 + 4q x + 6 q^2 x + 4 qx^2=(q^3(1+x)^2+(6q-q^2)qx)+q(2-q)(2+q)x(1+x).$$
\begin{corollary}
For $n\geqslant 2$, the polynomials $M_n(x,q)$ are alternatingly increasing if $0<q<2$.
\end{corollary}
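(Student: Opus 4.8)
The plan is to deduce the corollary directly from Theorem~\ref{thm2} together with Lemma~\ref{lemma-alt}. Recall that the proof of Theorem~\ref{thm2} already exhibits, for $0<q<2$, an explicit decomposition
$$M_n(x,q)=a(x)+xb(x),\qquad a(x)=\sum_{i\geqslant 0}qf_{n-1,i}(q)x^i(1+x)^{n-1-2i},\quad b(x)=\sum_{j\geqslant 0}qg_{n-1,j}(q)x^j(1+x)^{n-2-2j},$$
in which the coefficients $f_{n-1,i}(q)$ and $g_{n-1,j}(q)$ are nonnegative. Since each summand $x^i(1+x)^{n-1-2i}$ is palindromic with center $(n-1)/2$ and each summand $x^j(1+x)^{n-2-2j}$ is palindromic with center $(n-2)/2$, both $a(x)$ and $b(x)$ are symmetric; by the uniqueness of the symmetric decomposition, $(a(x),b(x))$ is precisely the symmetric decomposition of $M_n(x,q)$.

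First I would verify the degree hypotheses needed to invoke Lemma~\ref{lemma-alt}. Since $\ap(\sigma)\leqslant n-1$ for every $\sigma\in\mqn$, the polynomial $M_n(x,q)$ has degree $n-1$ in $x$, so I must check $\deg a(x)=n-1$ and $\deg b(x)=n-2$. The top coefficient of $a(x)$ is $qf_{n-1,0}(q)$; setting $i=0$ in~\eqref{recusystem02} (and using that negative-index coefficients vanish) gives $f_{m+1,0}(q)=qf_{m,0}(q)$ with $f_{1,0}(q)=q$, hence $f_{m,0}(q)=q^{m}>0$ for $0<q<2$. Likewise the top coefficient of $b(x)$ is $qg_{n-1,0}(q)$, and setting $i=0$ in~\eqref{recusystem02} gives $g_{m+1,0}(q)=2g_{m,0}(q)+(2-q)f_{m,0}(q)$ with $g_{1,0}(q)=2-q>0$, so $g_{m,0}(q)>0$ for $0<q<2$. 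Thus the degree conditions hold for every $n\geqslant 2$.

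With this in place the argument closes at once: Theorem~\ref{thm2} says that for $0<q<2$ the polynomials $a(x)$ and $b(x)$ are $\gamma$-positive, hence unimodal, and Lemma~\ref{lemma-alt} (applied with $n-1$ playing the role of $n$ there) then yields that $M_n(x,q)$ is alternatingly increasing, which is the claim. I do not expect a genuine obstacle here, as Theorem~\ref{thm2} supplies the bi-$\gamma$-positivity outright; the only points requiring attention are confirming that the decomposition furnished by Theorem~\ref{thm2} is indeed the symmetric decomposition and that its two leading coefficients are nonzero throughout $0<q<2$, both of which are immediate consequences of the recursion~\eqref{recusystem02}.
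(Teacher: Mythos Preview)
Your proposal is correct and follows exactly the route the paper intends: the corollary is stated without proof immediately after Theorem~\ref{thm2}, relying on the general remark (made after Lemma~\ref{lemma-alt}) that bi-$\gamma$-positivity implies the alternatingly increasing property. Your additional verification of the degree hypotheses via the recursion~\eqref{recusystem02} is a welcome bit of rigor that the paper leaves implicit.
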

When $q=2$, it follows from~\eqref{recusystem02} that
$$x^nM_n(1/x,2)=2x\sum_{i\geqslant 0}f_{n-1,i}(2)x^i(1+x)^{n-1-2i}=2\sum_{j\geqslant 1}f_{n-1,j-1}(2)x^j(1+x)^{n+1-2j},$$
where the coefficients $f_{n-1,i}(2)$ satisfy the recursion 
\begin{equation}\label{ank-recu02}
f_{n+1,i}(2)=(2+2i)f_{n,i}(2)+4(n-2i+2)f_{n,i-1}(2).
\end{equation}
Comparing~\eqref{ank-recu} with~\eqref{ank-recu02}, we find that $2^nA_n(x)=x^nM_n(1/x,2)$.

%%%%%%%%%%%%%%%%%%%%%%%%%%%%%%%%%%%%%%%%%%%%
\section*{Acknowledgements}
%\hspace*{\parindent}
%%%%%%%%%%%%%%%%%%%%%%%%%%%%%%%%%%%%%%%%%%%
%%%%%%%%%%%%%%%%%%%%%%%%%%%%%%%%%%%%%%%%%%%%%
%%%%%%%%%%%%%%%%%%%%%%%%%%%%%%%%%%%%%%%%%%%%%
Shi-Mei Ma was supported by the National Natural Science Foundation of China (No. 12071063) and
Taishan Scholars Foundation of Shandong Province (No. tsqn202211146). 
Jianfeng Wang was supported by National Natural Science Foundation of China (No. 12371353).
Guiying Yan was supported by the National Natural Science Foundation of China (No. 12231018).
Jean Yeh was supported by the National science and technology council (Grant number: MOST 1132115M017005MY2).
%%%%%%%%%%%%%%%%%%%%%%%%%%%%%%%%%%%%%%%%%%%%%%%%%%%%%%%%%%%%%%%%%%%%%%%%%%%%%
%%%%%%%%%%%%%%%%%%%%%%%%%%%%%%%%%%%%%%%%%%%%%%%%%%%%%%%%%%%%%%%%%%%%%%%%%%%%%
%%%%%%%%%%%%%%%%%%%%%%%%%%%%%%%%%%%%%%%%%%%%%%%%%%%%%%%%%%%%%%%%%%%%%%%%%%%%%
\bibliographystyle{amsplain}

\begin{thebibliography}{10}

\bibitem{Athanasiadis17}
C.A. Athanasiadis, \textit{Gamma-positivity in combinatorics and geometry}, S\'em. Lothar. Combin., \textbf{77} (2018), Article B77i.

\bibitem{Bastidas23}
J. Bastidas, C. Hohlweg, F. Saliola, \textit{The primitive Eulerian polynomial}, Combinatorial theory \textbf{4}(1) (2023), \#16.


\bibitem{Beck2010}
M. Beck, A. Stapledon, \textit{On the log-concavity of Hilbert series of Veronese subrings and Ehrhart series}, Math. Z., \textbf{264} (2010), 195--207.

\bibitem{Beck2019}
M. Beck, K. Jochemko and E. McCullough, \textit{$ h^\ast$-polynomials of zonotopes}, Trans. Amer. Math. Soc., \textbf{371} (2019), 2021--2042.

\bibitem{Bona08}
M. B\'ona, \textit{Real zeros and normal distribution for statistics on Stirling permutations defined by Gessel and Stanley}, SIAM J. Discrete Math., \textbf{23} (2008/09), 401--406.

\bibitem{Branden08}
P. Br\"and\'{e}n, \textit{Actions on permutations and unimodality of descent polynomials}, European J. Combin., \textbf{29} (2008), 514--531.

\bibitem{Brenti94}
F. Brenti, \textit{$q$-Eulerian polynomials arising from Coxeter groups}, European J. Combin., \textbf{15} (1994), 417--441.

\bibitem{Brenti00}
F. Brenti, \textit{A class of $q$-symmetric functions arising from plethysm},
J. Combin. Theory Ser. A, \textbf{91} (2000), 137--170.


\bibitem{Brualdi20}
R.A. Brualdi, \textit{Stirling pairs of permutations}, Graphs Comb., \textbf{36} (2020), 1145--1162.



\bibitem{Callan10}
D. Callan, \textit{Klazar trees and perfect matchings}, European J. Combin., \textbf{31} (2010), 1265--1282.


\bibitem{Carlitz74}
L. Carlitz, R. Scoville, \textit{Generalized Eulerian numbers: Combinatorial applications}, J. Reine Angew. Math., \textbf{265} (1974), 110--137.


\bibitem{Chen93}
W.Y.C. Chen, \textit{Context-free grammars, differential operators and formal
power series}, Theoret. Comput. Sci., \textbf{117} (1993), 113--129.



\bibitem{Chen17}
W.Y.C. Chen, A.M. Fu, \textit{Context-free grammars for permutations and increasing trees}, Adv. in Appl. Math., \textbf{82} (2017), 58--82.


\bibitem{Chen23}
W.Y.C. Chen, A.M. Fu, S.H.F. Yan, \textit{The Gessel correspondence and the partial $\gamma$-positivity of the Eulerian polynomials on multiset Stirling permutations},
European J. Combin., \textbf{109} (2023), 103655.

\bibitem{Chen25}
W.Y.C. Chen, A.M. Fu, \textit{Increasing binary trees and the $(\alpha,\beta)$-Eulerian polynomials}, Ann. Comb., (2025). https://doi.org/10.1007/s00026-025-00752-3.

%\bibitem{Comtet73}
%L. Comtet, \textit{Une formule explicite pour les puissances successives de l'operateur de d\'{e}rivations de Lie},
%C. R. Hebd. Seances Acad. Sci., \textbf{276} (1973), 165--168.

\bibitem{Dong24}
Y. Dong, Z. Lin, Q. Pan, \textit{A $q$-analog of the Stirling–Eulerian polynomials}, Ramanujan J., 65 (2024),  p1295--1311. 


\bibitem{Dumont80}
D. Dumont, \textit{Une g\'en\'eralisation trivari\'ee sym\'etrique des nombres eul\'eriens}, J. Combin. Theory Ser. A, \textbf{28} (1980), 307--320.
%
\bibitem{Dumont96}
D. Dumont, \textit{Grammaires de William Chen et d\'erivations dans les arbres et
arborescences}, S\'em. Lothar. Combin., \textbf{37}, Art. B37a (1996), 1--21.


\bibitem{Dzhumadil14}
A. Dzhumadil'daev, D. Yeliussizov, \textit{Stirling permutations on multisets}, Europ. J. Combin., \textbf{36} (2014), 377--392.
%


%\bibitem{Elizalde}
%S. Elizalde, \textit{Descents on quasi-Stirling permutations}, J. Combin. Theory Ser. A, \textbf{180} (2021), 105429. 


%
\bibitem{Foata70}
D. Foata, M.P. Sch\"utzenberger, \textit{Th\'eorie g\'eometrique des polyn\^omes eul\'eriens}, Lecture Notes in Math., vol. 138, Springer, Berlin, 1970.



\bibitem{Gessel78}
I. Gessel, R.P. Stanley, \textit{Stirling polynomials}, J. Combin. Theory Ser. A, \textbf{24} (1978), 25--33.


\bibitem{Haglund12}
J. Haglund and M. Visontai, \textit{Stable multivariate Eulerian polynomials and
generalized Stirling permutations}, European J. Combin., \textbf{33} (2012), 477--487.
%%


\bibitem{Ji24}
K.Q. Ji, Z. Lin, \textit{The binomial-Stirling-Eulerian polynomials}, European J. Combin., \textbf{120} (2024), 103962.

\bibitem{Ji25}
K.Q. Ji, \textit{The $(\alpha,\beta)$-Eulerian polynomials and descent-Stirling statistics on permutations}, Sci China Math., (2025). https://doi.org/10.1007/s11425-024-2362-3.


\bibitem{Jin23}
E.Y. Jin, \textit{Symmetric generating functions and Euler-Stirling statistics on permutations}, J. Combin.
Theory Ser. A, \textbf{197} (2023), 105752.


\bibitem{Zeng02}
G. Ksavrelof, J. Zeng, \textit{Two involutions for signed excedance numbers}, S\'{e}m. Lothar. Combin., \textbf{49} Art. B49e, 8pp, 2002/04.


\bibitem{Kuba11}
M. Kuba, A. Panholzer, \textit{Generalized Stirling permutations, families of increasing trees and urn models}, J. Combin.
Theory Ser. A, \textbf{118}(1) (2011), 94--114.




\bibitem{Lin21}
Z. Lin, J. Ma, P.B. Zhang, \textit{Statistics on multipermutations and partial $\gamma$-positivity}, J. Comb. Theory, Ser. A,
\textbf{183} (2021), 105488.


\bibitem{Liu21}
S.H. Liu, \textit{MacMahon's equidistribution theorem for $k$-Stirling permutations}, Adv. in Appl. Math., \textbf{128} (2021), 102193.

\bibitem{Liu23}
S.H. Liu, \textit{The Haglund-Remmel-Wilson identity for $k$-Stirling permutations}, European J. Combin., \textbf{110} (2023), 103676.


\bibitem{Ma15}
S.-M. Ma, T. Mansour, \textit{The $1/k$-Eulerian polynomials and $k$-Stirling permutations}, Discrete Math., \textbf{338} (2015), 1468--1472.


\bibitem{MaYeh17}
S.-M. Ma, Y.-N. Yeh, \textit{Eulerian polynomials, Stirling permutations
of the second kind and perfect matchings}, Electron. J. Combin., \textbf{24}(4) (2017), \#P4.27.

%
%\bibitem{Ma19}
%S.-M. Ma, J. Ma, Y.-N. Yeh, \textit{The ascent-plateau statistics on Stirling permutations}, Electron. J. Combin., \textbf{26}(2) (2019), \#P2.5.


\bibitem{Ma19}
S.-M. Ma, J. Ma, Y.-N. Yeh, \textit{$\gamma$-positivity and partial $\gamma$-positivity of descent-type polynomials}, 
J. Combin. Theory Ser. A, \textbf{167} (2019), 257--293.

%\bibitem{Ma21}
%S.-M. Ma, J. Ma, Y.-N. Yeh, R.R. Zhou, \textit{Jacobian elliptic functions and a family of bivariate peak polynomials}, European J. Combin., \textbf{97} (2021), 103371.
%

\bibitem{Ma22}
S.-M. Ma, J. Ma, J. Yeh, Y.-N. Yeh, \textit{Eulerian pairs and Eulerian recurrence systems}, Discrete Math., \textbf{345}(3) (2022), 112716.

%\bibitem{Ma23}
%S.-M. Ma, H. Qi, J. Yeh, Y.-N. Yeh, \textit{Stirling permutation codes}, J. Combin. Theory Ser. A, \textbf{199} (2023), 105777.


\bibitem{Ma24}
S.-M. Ma, J. Ma, J. Yeh, Y.-N. Yeh, \textit{Excedance-type polynomials, gamma-positivity and alternatingly increasing property}, 
European J. Combin., \textbf{118} (2024), 103869.

\bibitem{Ma26}
S.-M. Ma, H. Qi, J. Yeh, Y.-N. Yeh, \textit{Stirling permutation codes.~II}, J. Combin. Theory Ser. A, \textbf{217} (2026), 106093.



\bibitem{Pei24}
Y. Pei, J. Zeng, \textit{Counting derangements with signed right-to-left minima and excedances}, Adv. in Appl. Math., \textbf{152} (2024), 102599.

\bibitem{Petersen15}
T.K. Petersen, \textit{Eulerian Numbers}, Birkh\"auser/Springer, New York, 2015.

\bibitem{Savage2012}
C.D. Savage, G. Viswanathan, \textit{The $1/k$-Eulerian polynomials}, Electron. J. Combin.,
\textbf{19} (2012), \#P9.

\bibitem{Savage15}
C.D. Savage and M. Visontai, \textit{The $s$-Eulerian polynomials have only real roots}, Trans. Amer. Math.
Soc., \textbf{367} (2015), 763--788.

\bibitem{Schepers13}
J. Schepers and L.V. Langenhoven, \textit{Unimodality questions for integrally closed lattice polytopes}, Ann.
Comb., \textbf{17}(3) (2013), 571--589.

\bibitem{Zeng12}
H. Shin, J. Zeng, \textit{The symmetric and unimodal expansion of Eulerian polynomials via continued fractions}, European J. Combin., \textbf{33} (2012), 111--127.

\bibitem{Sloane}
N.J.A. Sloane, \textit{The On-Line Encyclopedia of Integer Sequences},
published electronically at https://oeis.org, 2010.


\bibitem{Xu25}
C. Xu, J. Zeng, \textit{Variations of the $(\alpha,t)$-Eulerian polynomials and gamma-positivity}, S\'em. Lothar. Combin., \textbf{93B} (2025), Article \#41.

\bibitem{Yan26}
S.H.F. Yan, X. Yang, Z. Lin, \textit{Combinatorics on bi-$\gamma$-positivity of $1/k$-Eulerian polynomials}, J. Combin. Theory Ser. A, \textbf{217} (2026), 106092.



\end{thebibliography}

\end{document}